\numberwithin{equation}{section}
\DeclareMathAlphabet{\pazocal}{OMS}{zplm}{m}{n}
\def\eps{\varepsilon }
\newcommand\R{\mathbb R}
\def\eps{\varepsilon}
\newcommand\br{\begin{remark}}
\newcommand\er{\end{remark}}
\newcommand\bp{\begin{pmatrix}}
\newcommand\ep{\end{pmatrix}}
\newcommand{\be}{\begin{equation}}
\newcommand{\ee}{\end{equation}}
\newcommand\ba{\begin{equation}\begin{aligned}}
\newcommand\ea{\end{aligned}\end{equation}}
\newcommand{\bap}{\begin{app}}
\newcommand{\eap}{\end{app}}
\newcommand{\begs}{\begin{exams}}
\newcommand{\eegs}{\end{exams}}
\newcommand{\beg}{\begin{example}}
\newcommand{\eeg}{\end{exaplem}}
\newcommand{\bpr}{\begin{proposition}}
\newcommand{\epr}{\end{proposition}}
\newcommand{\bt}{\begin{theorem}}
\newcommand{\et}{\end{theorem}}
\newcommand{\bc}{\begin{corollary}}
\newcommand{\ec}{\end{corollary}}
\newcommand{\bl}{\begin{lemma}}
\newcommand{\el}{\end{lemma}}
\newcommand{\bd}{\begin{definition}}
\newcommand{\ed}{\end{definition}}
\newcommand{\brs}{\begin{remarks}}
\newcommand{\ers}{\end{remarks}}
\newcommand{\ZZ}{{\mathbb Z}}
\newtheorem{theorem}{Theorem}[section]
\newtheorem{proposition}[theorem]{Proposition}
\newtheorem{corollary}[theorem]{Corollary}
\newtheorem{lemma}[theorem]{Lemma}
\theoremstyle{remark}
\newtheorem{remark}[theorem]{Remark}
\theoremstyle{definition}
\newtheorem{definition}[theorem]{Definition}
\newtheorem{example}[theorem]{Example}
\newcommand{\beq}{\begin{equation}}
\newcommand{\eeq}{\end{equation}}
\title{
Game-theoretic analysis of Guts Poker
}
\author{Luca Castronova}
\address{Bloomington High School South, Bloomington, IN 47405}
\email{luca.castronova@gmail.com}
\thanks{Research of L.C. was partially supported under Indiana Universities Summer 2021 REU program, 
under NSF grant no. DMS-1757857.}
\author{Yijia Chen}
\address{Indiana University, Bloomington, IN 47405}
\email{chenyiji@iu.edu}
\thanks{Research of Y.C. was partially supported by the Indiana University Mathematics Department Research Travel
and Support Fund}
\author{Kevin Zumbrun}
\address{Indiana University, Bloomington, IN 47405}
\email{kzumbrun@indiana.edu}
\thanks{Research of K.Z. was partially supported
under NSF grant no. DMS-0300487}
\begin{document}

\begin{abstract}
We carry out a game-theoretic analysis of the generalized recursive game ``Guts,'' a variant of poker featuring 
repeated play with possibly growing stakes.  An interesting aspect of such games is the need to account for 
funds lost to all players if expected stakes do not go to zero with the number of rounds of play. 
We provide a sharp, easily applied criterion eliminating this scenario, under which one may compute a value for 
general games of this type.
Using this criterion, we determine an optimal ``pure'' strategy for a 2-player continuous version of guts, 
consisting of a simple threshold criterion.  For the $n$-player continuous version, $n\geq 3$, 
we determine an optimal threshold strategy against ``bloc play'' in which players 
2-$n$ pursue identical strategies, giving nonnegative return for player 1.
Against general collaborative strategies of players 2-$n$, we show that player 1 cannot force a nonnegative return.
It follows that there exists a nonstrict symmetric Nash equilbrium, but this equilibrium is not strong.

Finally, we obtain an analogous partial result for the original discrete 2-player game, 
determining an optimal, pure, strategy under the restriction that pure strategies be of threshold type.
\end{abstract}

\date{\today}
\maketitle
\tableofcontents

\section{Introduction}\label{s:intro}
Recursive games, first studied by Everett \cite{E}, consist of ``game elements'' analogous to Markov states.
Starting in an initial state, players choose a first-round strategy resulting in a random outcome, which consists of
either termination of the game with a payoff associated to that state, or else redirection to another state/game element, after which play is reinitiated in that element.  Thus, the game may have many repeated stages, in principle infinitely many, and the stakes of the game vary with the current state.
This is closely related to the notion of stochastic game introduced earlier by Shapley \cite{Sh1}, 
which likewise features game elements, but in which play at a certain element results in transition to a 
new state {\it and} a payoff associated to the current state, but without termination.  A stochastic
game is either played for a fixed number of rounds, or continued indefinitely, with 
total payoff defined as lim inf of stage averages, or a discounted sum of stage payoffs.
The total payoff of a recursive game is defined simply as the (undiscounted) sum of stage payoffs, 
or their lim inf: for a finite-state recursive game with finite state payoffs, clearly finite.

Note that in terms of expected payoffs at each stage, a recursive game is equivalent to a stochastic game
with varying stakes, in which the payoff for state $i$ is the probability $0\leq \theta_i\leq 1$ of termination
times the expected payoff upon termination, the transition probability $\Pi_{ij}$, $\sum_j \Pi_{ij}=1$
from state $i$ to state $j$ is given by $\pi_{ij}/(1-\theta_i)$, where $\pi_{ij}$ is the 
transition probability for the original recursive game,
satisfying $\sum_j \pi_{ij}=1-\theta_i$, and the stakes are multiplied by factor $0\leq (1-\theta_i)\leq 1$ 
for the next stage; that is, future payoffs are adjusted by factor $(1-\theta_i)$.
By this means,
a standard recursive game may be recast as a stochastic game with variable but nonincreasing stakes.

Here, motivated by the example of Guts Poker described below,
we define a {\it generalized recursive game} to be a stochastic game with variable and possibly increasing
stakes, and study this new class of games.
We expect that other interesting applications may be found in business and economics.

Existence or nonexistence of a minimax value for such a game is an interesting question, as is the determination of 
the value and optimal strategies.  A natural approach pioneered by Everett is to consider the question as a fixed-point iteration from values of game elements to themselves, with the mapping determined by von Neumann's minimax principle for ordinary games, viewing the recursive game as an ordinary one with payouts for different game elements given by the 
prescribed input values of each element.    
Should a unique fixed point exist, one expects that the game element values fixed by this process
represent the values of each element in the sense of the original recursive game.
A further step is to show that this is indeed true under reasonable definitions of value.

In this paper, we study a simple generalized recursive game consisting of a single game element, namely, the poker 
variant known as ``Guts'' \cite{W1}, which can be played in $2$- and multi-player versions.

This has interest both theoretical and practical.
On the theoretical side, this is a case for which expected stakes may be nondecreasing as the game goes on,
hence the ``value map'' of Everett is noncontractive and fixed points in general nonunique.
Moreover, nondecreasing stakes leave open the possibility that some portion of funds may be effectively lost to all players, as inaccessible antes in a nondecreasing pot, making the game in principle non-zero sum.
We sidestep these issues by the introduction of a simple and sharp direct criterion determining whether a player can force a nonnegative return, the main issue of interest in many cases; see Theorem \ref{tthm}, Section
\ref{s:unbounded}.
This is phrased in the general framework of generalized recursive games with a ``buyout option'' by which a player may force termination of the game for a fee given by a fixed proportion of the current pot.
This could be zero for example, corresponding to the case that a player is allowed to withdraw their ante and leave the game,  or one in the case (considered here) that a player may leave the game but forfeits their ante.
Other possibilities may be imagined: for example a ``tax'' applied by house or casino.

On the practical side, this is a popular poker game played by many, hence a winning strategy is of inherent interest in applications.
To this end, we first approximate the game by a simplified {\it continuous model}, replacing discrete probabilities of different hands by continuous uniformly distributed probabilities on $[0,1]$.
After analyzing the continuous model, we return to the discrete analysis viewed as a small perturbation.
For the $2$-player, or ``heads-up'' game, we obtain a complete solution for the continuous 
model, consisting of a ``pure'' or deterministic ``go/no-go'' strategy in which the players chooses between their two options in the game (holding or dropping, described just below) according as the value of their hands is above or 
below a certain threshold; see Section \ref{s:analysis_2}.
For the 2-player game this threshold is the ``median'' hand for which a player has equal probability receiving a hand greater or less than the optimal one.
For the corresponding discrete game, restricting to threshold-type pure strategies, we obtain a similar threshold
solution, in which the threshold value is roughly but not exactly that of the median hand, having a slight
shift due to the fact that cards are drawn without replacement, changing conditional probabilities.
We believe but have not shown that this solution is optimal also against non-threshold strategies of the opponent, 
which for the most part are dominated by threshold strategies; see discussion, Section \ref{s:disc}.

For the n-player game, $n\geq 3$, we treat the continuous model only. For 3 players, we compute the full payoff function 
and use this to show that (i) there is a go/no-go strategy generalizing that of the 2-player game guaranteeing a nonnegative return against ``bloc'' strategies, in which players 2 and 3 choose identical strategies in each round, and
(ii) against general player 2-3 strategies, player 1 cannot force a nonnegative return by any strategy whatsoever (either pure or ``mixed'', i.e., random); see Section \ref{s:analysis_3}.
That is, considered as a 2-player game of player 1 against a coalition of players 2 and 3, the game 
has a strictly negative return for player 1.
Though we do not carry it out here, we expect that this implies by continuity of return with respect to payoff function 
that the discrete game also has a strictly negative return.
For the $n$-player continuous game, we compute a restricted payoff function against bloc strategies and small perturbations thereof, which turns out to be enough to make the same conclusions (i)--(ii) as in the 3-player case;
see Section \ref{s:n}.

We note that Guts is a {\it symmetric game}, i.e., identical from the perspective of each different player, hence
the best possible outcome forceable by player 1 (or any player) is a nonnegative return.
Expressed in the language of Nash' theory of $n$-player noncooperative games \cite{N,W2}, 
our results show that for any $n$, continuous $n$-player Guts possessess a symmetric {\it Nash equilibrium}
with expected value zero for all players, consisting of identical ``pure'' threshold strategies for
which we provide a simple explicit formula for the threshold value as a function of $n$.
For finite matrix games, {\it any} symmetric game possesses a symmetric equilibrium \cite{N}, which in
the zero-sum case returns zero to each player; however, in the present more general case this is far from obvious.
Moreover, the symmetric Nash equilibrium guaranteed in the finite case is in general of ``mixed'', or random,
rather than pure type; this special feature is another interesting aspect of Guts. 

Recall that a Nash equilibrium is a collection of (in general mixed) strategies from which departure by a single player with all other players holding fixed will result in an equal or lesser return for the deviating player;
thus, in principle, players have a motivation to remain at this equilibrium point.
However, this ignores the possibility of players working in concert to improve their joint outcome.
A {\it strong} Nash equilibrium- which may or may not exist- 
is a stronger notion taking into account such possible coalitions, requiring that no {\it subset} of players,
or coalition, may jointly profit by deviation from the equilibrium.
The two concepts coincide in the case $n=2$. For $n\geq 3$, our results show that the symmetric Nash equilibrium
{\it is not a strong equilibrium}, but rather
can be destroyed by the action of a coalition: namely, the coalition consisting of
players 2-$n$.

\medskip

{\bf Acknowledgement:}
We thank the Indiana University Mathematics Department, and particularly chair K. Pilgrim and REU coordinator
D. Thurston, together with the National Science Foundation, for their generous support of this undergraduate research
project during difficult times.
The open source Python environment was invaluable in numerical experiments supporting our analysis.
The graph in figure \ref{negativefig} was made using the Desmos graphing calculator package.
Thanks to M. Lewicka for suggesting the treatment of the ``Weenie rule'' 
carried in Appendix \ref{s:weenie}.
Finally, thanks to Jacob Platnick for suggesting the 
erminology ``generalized recursive game,''
and to Jacob and Jay Lee for discussions in the course of the followup study \cite{BLPWZ} that
influenced this revision.

\section{Description of the game}\label{s:desc}
In the card game of ``Guts,'' players are dealt at random a $2$-, $3$-, or $5$-card hand, depending on variants,
with hands strictly ordered in value. All players ante a common amount to a central pot.
After viewing the cards, players hold their hands face down above the table, 
and, upon a count or signal, simultaneously ``drop'' or ``hold'' their cards.
The players who have dropped are out of play.  If only a single player holds, that player wins the pot and play is
over for that round.  If $m>1$ players hold, the one with highest value hand wins the pot, and the others must
``match'' the pot, or forfeit to the pot an amount equal to its former value, the pot thus increasing by factor
$m-1$. The game is then replayed with all players, including those who dropped, for the new pot (with no further ante).
If all players drop, the game is replayed for the original pot, with no further ante.
Play continues until only a single player holds, ending the round.
The often rapidly growing pot and simplicity of play make this an exciting and attractive game for poker play;
the same features make it appealing for game-theoretic analysis.

\subsection{2-player Guts}\label{s:2intro} In 2-player guts, the pot is only replaced, and never increases, since the
number of holding players is $m\leq 2$, hence $m-1\leq 1$.
This simplifies the mathematical situation somewhat, as does the observation that the game will eventually terminate,
with probability one, unless each player follows the strategy to hold on every play.
This is clearly a suboptimal strategy for each individual player, so can essentially be ignored- in particular, as part of an optimal strategy, a player can make sure that this situation does not occur.
Indeed, as we will show, there is a ``pure,'' or deterministic optimal strategy consisting of a fixed card value 
above which the player always holds, and below which they always drop-
roughly the ``median'' strategy in which the determining card value is the one above or below 
which hands occur with probability $1/2$- guaranteeing a nonnegative expected return.
That is, there is a (deterministic!) von Neumann
equilibrium in which each player in this symmetric game can guarantee the same (fair) return $0$.

\subsection{n-player Guts}\label{s:nintro}
For $n$-player guts, $n\geq 3$, the situation is more complicated.  
First of all, the notion of value becomes much trickier for any game with $n\geq 3$ \cite{N}.
We will investigate what is the expected return that player 1 can guarantee against any fixed collection of strategies for players 2-$n$, which may be contingent on outcomes of earlier rounds, but must be chosen before play begins.

This is equivalent to permitting collaboration between opponents 2-$n$, but without communication of card values.
A second issue is that the pot can now in principle grow without bound, making a mathematical theory of 
value more tricky.
In particular, a game could continue indefinitely with nonvanishing amount of funds remaining in the pot undistributed to
any player.
We finesse this issue using the convention that player 1 may at any time ``opt out'' or walk away from the game, forfeiting their chance at winning the pot, equivalent to dropping for every future hand.
This shifts the difficulty to one of determining value for games with a termination fee, or buyout clause.

\section{Modeling and preliminary simplification}\label{s:modeling}
To simplify the initial discussion, we replace discrete card hands by continuous random variables 
$$
p_i, \quad i=1,\dots, n,
$$
uniformly and independently distributed in $[0,1]$, with ordering of continuous ``hands'' given by the standard ordering of the reals.
We shall return to the discrete case later, at least in the case $n=2$.
A strategy for the $i$th player then consists of a measurable subset $S_i \subset[0,1]$ for which the player holds 
if $p_i\in S$ and drops otherwise.

We can simplify this still further by observing that only subsets of form $S=[ p^*,1]$ need be considered, as
they majorize (i.e., give equal or better outcome than) any strategy $S$ with the same measure $|S|=|1-p^*|$. 
This may be seen by the fact that the map $p(s)$ 
defined by $|S \cap [s,1]|=|[p(s),1]|$ 
takes $s\in S$ to $[1-|S|,1]$ and is monotone and measure-preserving, with $p(s)\geq s$,
whence, comparing $p(s)$ to $s$, we find that the outcomes for holding with strategy $[p^*,1]$ are better than
those for $S$, since they have the same probabilities and higher card values.
On the other hand, the outcomes for dropping are the same, since they have the same total probability and value of cards is irrelevant.
{\it Thus, we need only consider single-stage strategies of form $[0,p^*]$.}  Henceforth, we shall drop the set notation altogether, and simply refer to a strategy by its cutoff value $p^*\in [0,1]$, 
with the understanding that a player holds for $p\geq p^*$ and drops for $p<p^*$.

\subsection{Reduction to noncontingent strategy sequences}\label{s:strat1}
We next consider total strategies, consisting of sequences of single-stage strategies for each stage $n$, 
possibly contingent upon the number $n$ of the stage, and the current winnings and stakes at that stage.
A straightforward but crucial further simplification, valid for any generalized recursive game, is
that these sequences may without loss of generality be taken as {\it noncontingent}, i.e., depending only on $n$.
This is a consequence of the dynamical programming principle of Bellman \cite{Sn}.  For, whenever we have a well-defined optimum sequence starting from stage $m$ with stakes $1$ and winnings $0$, this strategy by an affine rescaling is also optimum for arbitrary stakes and winnings.
And, by the dynamical programming principle, together with independence of random outcomes at different stages,
an optimum contingent strategy starting from $m-1$ with stakes $1$ and
winnings $0$ is given by the optimal strategy for the single-shot game $A_{ij}+  B_{ij}R_m$, where
$R_m$ is the optimal expected return starting at stage $m$. 
Thus, (i) we may restrict to consideration of (uncontingent) {\it strategy sequences} $S_1, S_2, \dots, S_n, \dots$
depending only on the stage $j$ and not on current winnings or stakes, 
where $S_j$ is the single-stage strategy chosen at stage $j$,
and (ii) for such strategy sequences, we need only carry the information of {\it expected} one-shot return and stakes factor, and not individual outcomes in order to compute expected total return.
From here on, we consider only {\it uncontingent} strategy sequences, throwing out the dominated contingent type.

\subsection{One-shot payoff and stakes functions}\label{s:payoff}
%
The above considerations allow us to compute the expected total return $R_1$ to player 1 for a given set of strategies $p_j^*$ for each player $j$ in the first round of play, assuming that strategies for rounds $2,3, \dots$ have 
already been assigned, as 
\be\label{payoff}
R_1=
\alpha (p_1^*, \dots, p_n^*) +
\beta (p_1^*, \dots, p_n^*) R_2,
\ee
where $R_2$ is the expected total payoff starting at stage 1 with stakes 1, $\alpha$ is the expected one-shot return,
and $\beta$ is the expected multiple of the original pot given that play continues, times the
probability of repeated play: in the $2$-player game (for which the pot never grows), 
simply the probability of repeated play.
More generally,
\be\label{gpayoff}
R_{m-1}=
\alpha (p_1^*, \dots, p_n^*) +
\beta (p_1^*, \dots, p_n^*) R_{m},
\ee
where $R_m$ is the return starting at stage $m$ with strategy sequence $S_m, S_{m+1}, \dots$ and initial stakes 1.

The first step in our analysis will be to calculate the one-shot payoff and stakes functions $\alpha$ and
$\beta$, which are always computable. Afterward,
we will use \eqref{gpayoff}, together with an appropriate limiting and or truncation scheme as $m\to \infty$
in order to compute the expected payoff for a given strategy sequence, and, eventually,
obtain information on optimal strategy sequences and total return for a given generalized recursive game.

\subsection{Bookkeeping details}\label{s:detail}
In computing the expected one-shot payoff $\alpha$, or immediate return,
for guts,
we shall take the point of view that the ante is to be paid upon termination of the game.
Thus, for example, if two players hold, then the immediate return to the winning player is the value
$+n$ of the entire pot, and to the losing player $-n$, with the stakes at next round remaining at value $1$,
the multiplier of the pot. The immediate return to any players that drop in this scenario is $0$.

A subtlety of this bookkeeping system occurs when three players hold.
For, then, the pot doubles, effectively paying all players one unit of additional ante in the resulting
higher-stakes game, which they will in fact never have to pay.  So the immediate returns of all players are
incremented by one unit and the stakes- and ante- are changed to 2, exactly balancing out.  Thus, the winning player
receives immediate return $n+1$ and the losing (holding) players receive return $-n+1=-(n-1)$.  Any dropping players
receive $+1$.
This system may seem a bit strange, but, comfortingly, one may check that the immediate return is in every event
zero-sum:  if $r$ players hold, then the stakes are multiplied by $(r-1)$, giving all players an additional
``virtual ante'' of $(r-2)$.  Meanwhile, the single winning player receives $+n$ return while the $(r-1)$ 
losing (holding) players receive $-n$, for a total immediate return of $n(r-2) + n- (r-1)n=0$.
It follows by summation across events that the expected immediate return $\alpha$ is zero-sum as well.

\subsection{Symmetries}\label{s:symmetries}
The terms $\alpha$ and $\beta$ of the one-shot payoff function feature several symmetries, which
can be useful both in checking and in deriving their form.
The stakes multiplier $\beta$, by symmetry of the underlying game, is invariant under
permutations of player strategies, hence a {\it symmetric function} of its arguments.

Likewise, the one-shot payoff function $\alpha$ is, by symmetry of the game, invariant under permutations of 
strategies $p_2^*,\dots, p_n^*$, but {\it not} (since it is computed from the point of view of player 1's profits only)
under permutations involving $p_1^*$.
Combining this with the zero-sum property noted above, we have also
\be\label{nsym2}
\alpha(p_1^*, \dots, p_n^*) + \alpha( p_2^*,p_1^*,\dots, p_n^*) + \dots + \alpha(p_n^*,p_1^*,\dots p_{n-1}^*)=0,
\ee
yielding in particular
\be\label{n0symm}
\alpha(p_1^*,p_1^*, \dots ,p_1^*)=0. 
\ee
and
\ba\label{nsymm_aux}
\alpha (p_1^*,p_2^*, \dots,p_2^*)&= -(n-1) \alpha(p_2^*,p_1^*,p_2^*, \dots, p_2^*)\\
&= 
-(n-1) \alpha(p_2^*,p_2^*, p_1^*,p_2^*,\dots,p_2^*)=\cdots=
-(n-1) \alpha(p_2^*,\dots, p_2^*, p_1^*).
\ea

\section{Analytic framework: value of single-state generalized recursive games}\label{s:value}
We start by a general discussion of the value of 2-player generalized recursive games involving a single state,
hereafter referred to for compactness of writing as a generalized recursive game.
This will suffice also for our treatment of the n-player game, which we have chosen to view
as a 2-player game between player 1 and players 2-$n$.
For simplicity, we carry out the discussion in the setting of finite games, to which the original discrete game
belongs, indicating at the end extensions to the continuous case. 

Recall first the fundamental theorem of zero-sum 2-player finite matrix games due to von Neumann.
A finite 2-player game may be described by its {\it payoff matrix} $A_{ij}$ recording the expected return, or
payoff, to player 1 given that player 1 chooses strategy $i$ and player 2 strategy $j$, where the 
possible strategies (which could consist of a number of complicated steps) of players 1 and 2 are ordered
in a list and indexed by integers $i=1,\dots m$ and $j=1,\dots,n$.
These are known as ``pure'' or ``deterministic'' strategies. Players may also make use of ``mixed'' or ``blended''
strategies \cite{B,vN} in which they choose strategies $i$ with probability $x_i$ and $j$ with probability $y_j$,
$x$ and $y$ independent, leading to a {\it payoff function}
\be\label{stdpayoff}
\alpha(x,y)= \sum_{i,j} x_i A_{ij}y_j
\ee
corresponding to the expected payoff for these choices.
By the zero-sum assumption, the payoff to player 2 is $-\alpha(x,y)$; thus, the goal of player 1 is to maximize
the value of $\alpha$ while the goal of player 2 is to minimize it.
The Fundamental Theorem, stated below, asserts that the maximum payoff that can be forced by player 1 in worst-case scenario is equal to the minimum that can be forced by player 2 in worst-case scenario.
This joint value of maximin and minimax is defined as the {\it value} of the game.  We will denote it
for reference as $Value(A)$.

\begin{proposition}[Fundamental Theorem of Games \cite{vN}]\label{fundthm}
	For any payoff matrix $A\in \R^{m\times n}$,
	\ba\label{fundrel}
	\max_{0\leq x_i\leq 1,\; \sum_{i=1}^m x_i=1}& \min_{0\leq y_j\leq 1,\; \sum_{j=1}^n y_j=1}
	\sum_{i,j} x_i A_{ij}y_j
	=\\
	&\quad
	\min_{0\leq y_j\leq 1,\; \sum_{j=1}^m y_j=n}
	\max_{0\leq x_i\leq 1,\; \sum_{i=1}^m x_i=1}
	\sum_{i,j} x_i A_{ij}y_j.
	\ea
\end{proposition}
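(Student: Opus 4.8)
Write $f(x,y)=\sum_{i,j}x_iA_{ij}y_j=\langle x,Ay\rangle$, and let $\Delta_m,\Delta_n$ denote the probability simplices over which $x$ and $y$ range. The plan is to establish the two opposing inequalities separately. The inequality $\max_x\min_y f\le \min_y\max_x f$ holds for \emph{any} function of two arguments and needs no convexity: for all fixed $x_0,y_0$ one has $\min_y f(x_0,y)\le f(x_0,y_0)\le \max_x f(x,y_0)$, so applying $\max$ over $x_0$ and then $\min$ over $y_0$ to the two ends yields the claim. The entire content of the theorem is thus the reverse inequality $\min_y\max_x f\le \max_x\min_y f$, and I would prove it by the separating-hyperplane theorem (equivalently, a theorem-of-the-alternative or linear-programming-duality argument).

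Before attacking it I would record a linearity reduction that turns mixed opponent strategies into pure ones. Since $y\mapsto f(x,y)=\langle A^\top x,y\rangle$ is linear, its minimum over $\Delta_n$ is attained at a vertex, so $\min_y f(x,y)=\min_{1\le j\le n}(A^\top x)_j$; symmetrically $\max_x f(x,y)=\max_{1\le i\le m}(Ay)_i$. It therefore suffices to prove
$$\min_{y\in\Delta_n}\ \max_{1\le i\le m}(Ay)_i\ \le\ \max_{x\in\Delta_m}\ \min_{1\le j\le n}(A^\top x)_j .$$

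I would argue this by contradiction after translating it into a statement about convex sets. Suppose the left side strictly exceeds the right, and subtract from every entry of $A$ a constant $c$ lying strictly between them; since $x,y$ are probability vectors this shifts both sides by $c$, so we may assume $\max_x\min_j(A^\top x)_j<0<\min_y\max_i(Ay)_i$. The left inequality says precisely that the compact convex set $P:=\{A^\top x:x\in\Delta_m\}\subset\R^n$, the convex hull of the rows of $A$, is disjoint from the closed orthant $\R^n_{\ge 0}$. Because $P$ is compact and the orthant is closed and convex, the separating-hyperplane theorem produces a nonzero functional $y\in\R^n$ strictly separating them.

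The key step, and the one I expect to be the main obstacle, is extracting a genuine \emph{strategy} from this functional. Since $\langle y,\cdot\rangle$ must be bounded below on the unbounded cone $\R^n_{\ge 0}$, the vector $y$ is forced to have all coordinates nonnegative, and the infimum of $\langle y,\cdot\rangle$ over the orthant is then $0$ (attained at the origin); hence $\langle y,p\rangle<0$ for every $p\in P$, with $y\ge 0$ and $y\ne 0$. Normalizing so that $\sum_j y_j=1$ gives $y\in\Delta_n$, and taking $x=e_i$ (the $i$th pure strategy) shows $(Ay)_i=\langle y,A^\top e_i\rangle<0$ for every $i$, so $\max_i(Ay)_i<0$. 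This contradicts $\min_y\max_i(Ay)_i>0$ and establishes the reverse inequality, completing the proof. The only delicate points are fixing the correct orientation of the separating hyperplane and the cone argument that promotes its normal to a probability vector; everything else is the linearity bookkeeping recorded above.
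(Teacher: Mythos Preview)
Your proof is correct. The easy inequality is standard, and your separating-hyperplane argument for the hard direction is valid: the reduction by translation to the case $\max_x\min_j(A^\top x)_j<0<\min_y\max_i(Ay)_i$ is clean, the disjointness of $P=A^\top(\Delta_m)$ from $\R^n_{\ge 0}$ follows exactly as you say, and the cone argument forcing $y\ge 0$ and then $\langle y,p\rangle<0$ on $P$ is right. Normalizing $y$ and evaluating at the vertices $e_i$ gives the contradiction.

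The paper, however, does not prove this proposition directly. It simply observes that $f(x,y)=x^\top Ay$ is linear in each argument separately, hence concave in $x$ and convex in $y$, and that the simplices $\Delta_m,\Delta_n$ are compact and convex; Proposition~\ref{fundthm} then follows as an immediate corollary of the general concave--convex Minimax Theorem (Proposition~\ref{mmthm}), which is cited from \cite{O} without proof. So the paper's route is a one-line reduction to a black-box result, while your route is a self-contained argument using only the separating-hyperplane theorem. Your approach has the advantage of being elementary and exhibiting explicitly where the bilinear structure enters (the vertex reduction and the cone argument); the paper's approach is shorter on the page and situates the result in the more general framework that it actually uses later (e.g., Remark~\ref{lessrmk} and Example~\ref{repeat}, where non-bilinear payoffs appear). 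In substance the two are not far apart: the standard proof of Proposition~\ref{mmthm} in the bilinear case is essentially your hyperplane argument.
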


The Fundamental Theorem is a corollary of the following functional-analytic result, 
following by the observation that the payoff function $\alpha(x,y)$ is linear in $x$ and $y$.

\begin{proposition}[Minimax Theorem \cite{O}]\label{mmthm}
	Let $X\subset \R^m$ and $Y\subset \R^n$ be compact convex sets, and let $f(x,y): (X,Y)\to \R$ be
	continuous and concave-convex, i.e., concave in $x$ and convex in $y$:
	$f(\cdot, y)$ concave and $f(x, \cdot)$ convex. Then,
	\be\label{minimax}
	\max_{x\in X}\min_{y\in Y} f(x,y)=
	\min_{y\in Y} \max_{x\in X} f(x,y).
	\ee
\end{proposition}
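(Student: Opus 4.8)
The plan is to prove the two inequalities comprising \eqref{minimax} separately. The inequality
\[
\max_{x\in X}\min_{y\in Y} f(x,y)\leq \min_{y\in Y}\max_{x\in X} f(x,y)
\]
is the easy ``weak duality'' direction and requires no convexity: since $f$ is continuous and $X$, $Y$ are compact, all four extrema are attained, and for every fixed $x'\in X$ and $y'\in Y$ we have $\min_{y\in Y} f(x',y)\leq f(x',y')\leq \max_{x\in X} f(x,y')$. Taking the maximum over $x'$ on the left and then the minimum over $y'$ on the right yields the claim. The substance of the theorem is the reverse inequality.

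For the reverse direction, I would first reduce the problem to the existence of a \emph{saddle point}, that is, a pair $(x^*,y^*)\in X\times Y$ satisfying
\[
f(x,y^*)\leq f(x^*,y^*)\leq f(x^*,y)\qquad\text{for all } x\in X,\ y\in Y.
\]
Granting such a point, the left inequality shows $x^*$ maximizes $f(\dott,y^*)$, so $\min_{y}\max_{x} f\leq \max_{x} f(x,y^*)=f(x^*,y^*)$; the right inequality shows $y^*$ minimizes $f(x^*,\dott)$, so $f(x^*,y^*)=\min_{y} f(x^*,y)\leq \max_{x}\min_{y} f$. Chaining these gives $\min_{y}\max_{x} f\leq f(x^*,y^*)\leq \max_{x}\min_{y} f$, which is exactly the inequality sought.

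To produce the saddle point I would invoke Kakutani's fixed-point theorem applied to the joint best-response map $T\colon X\times Y\to 2^{X\times Y}$ defined by $T(x,y)=B_X(y)\times B_Y(x)$, where
\[
B_X(y)=\Big\{x'\in X:\ f(x',y)=\max_{\xi\in X}f(\xi,y)\Big\},\qquad
B_Y(x)=\Big\{y'\in Y:\ f(x,y')=\min_{\eta\in Y}f(x,\eta)\Big\}.
\]
The verification has three ingredients. The values $B_X(y)$ and $B_Y(x)$ are nonempty and compact by the Weierstrass extreme value theorem (continuity of $f$, compactness of $X$, $Y$). They are convex precisely because of the concave-convex hypothesis: the maximizer set of the concave function $f(\dott,y)$ is its top level set, hence convex, and dually the minimizer set of the convex function $f(x,\dott)$ is convex. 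Finally, $T$ has closed graph (equivalently, is upper hemicontinuous) by Berge's maximum theorem, since the marginal value functions $y\mapsto\max_{\xi}f(\xi,y)$ and $x\mapsto\min_{\eta}f(x,\eta)$ are continuous. Kakutani then furnishes a fixed point $(x^*,y^*)\in T(x^*,y^*)$, i.e.\ $x^*\in B_X(y^*)$ and $y^*\in B_Y(x^*)$, which is a saddle point by construction, completing the proof.

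The main obstacle is the upper-hemicontinuity (closed-graph) step for $T$, i.e.\ the application of Berge's maximum theorem, together with the convex-valuedness: this is the only place where the concave-convex hypothesis is genuinely used, and continuity with compactness alone would not suffice to run the fixed-point argument. As an alternative avoiding fixed-point theory, one could instead give a direct proof in the style of Sion via the KKM/finite-intersection lemma and supporting hyperplanes, reducing to a one-dimensional intersecting-intervals argument; I would regard the Kakutani route above as the cleanest to state within the present framework.
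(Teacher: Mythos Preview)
The paper does not supply its own proof of this proposition; it is stated with a citation to \cite{O}, and the only further comment is Remark~\ref{lessrmk} indicating that ``the proof of Proposition~\ref{mmthm} \cite{O}'' goes through under the weakened quasi-concave/quasi-convex conditions \eqref{weakcc}. So there is nothing in the paper to compare your argument against line by line.

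Your proof is correct. Weak duality is handled cleanly, the reduction of the reverse inequality to existence of a saddle point is standard and accurate, and the Kakutani step is properly set up: $B_X(y)$ and $B_Y(x)$ are nonempty compact by Weierstrass, convex because the argmax of a concave function (resp.\ argmin of a convex function) is a top (resp.\ bottom) level set, and Berge's maximum theorem applied with the constant constraint correspondences $y\mapsto X$, $x\mapsto Y$ gives upper hemicontinuity of each factor and hence of the product $T$. Kakutani then delivers the fixed point, which is a saddle point.

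One remark on the relation to the cited source: the phrasing of Remark~\ref{lessrmk} strongly suggests that the proof in \cite{O} the authors have in mind is the level-set/finite-intersection argument (the Sion-type route you mention at the end), since that proof is naturally stated for quasi-concave/quasi-convex $f$. Your Kakutani argument also survives under \eqref{weakcc}, because those conditions still force the argmax and argmin sets to be convex; so the two approaches are interchangeable here, with the level-set proof being slightly more elementary (no fixed-point theorem) and your Kakutani proof being more structural and closer to Nash's equilibrium-existence template.
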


\br\label{lessrmk}
In the proof of Proposition \ref{mmthm} \cite{O}, convexity and concavity may be relaxed to
\ba\label{weakcc}
f\big(\theta x_1+ (1-\theta) x_2,y\big)&\geq \min\{f(x_1,y),f(x_2,y)\},\\
f\big(x, \theta y_1+ (1-\theta) y_2\big)&\leq \max\{f(x,y_1),f(x,y_2)\}\\
\ea
for $0< \theta< 1$, with equality only if $f(x_1,y)=f(x_2,y)$
(resp. $f(x,y_1)=f(x,y_2)$). 
\er

We are now ready to discuss the interesting case of {\it single-state generalized recursive games,} 
in which for certain outcomes
the game is replayed with varying stakes.
A single-state generalized recursive game with finitely many strategies may be characterized by {\it two} 
payoff matrices $A$ and $B$, where $A_{ij}$ represents the expected ``one-shot payoff'', or
immediate return to player 1 for a single round of play, 
given that player 1 chooses strategy $1\leq i\leq m$ 
and player 2 chooses strategy $1\leq j\leq n$, and $B_{ij}$ the expected stakes in the next round, i.e., the
sum over all events of the product of probability of replaying times the stakes of the replayed game.
We make the important assumption
\be\label{bpos}
B_{ij}\geq 0,
\ee
meaning that the game is never replayed for negative stakes.

\subsection{Strategies and expected payoff, convergent case}\label{s:strat}
A strategy for a generalized recursive game consists of a (possibly infinite) sequence of strategies
$S_1, S_2, \dots, S_n, \dots$
for the one-shot game represented by $A_{ij}$, $B_{ij}$, which, by the discussion of Section \ref{s:strat},
may be taken to depend only on the stage $n$ of play.
To distinguish this from the notion of one-shot strategy, we will refer to this as a strategy {\it sequence}.
The total return given a pair of opposing strategy sequences
is the sum over all stages of each one-shot payoff multiplied by
the current stakes factor, should this sum converge, and the expected payoff is the sum of expected values at each
stage, should this converge.
With increasing stakes, of course, it is possible that these sums do {\it not} converge, and we will 
have to define expected payoff in a more complicated way; but let us first discuss the illustrative
convergent case.

\subsection{Value and fixed points, convergent case}\label{s:vfix}
Suppose that the expected payoff converges almost surely for each pair of strategy sequences.
Let $\underline V$ denote the supremum of expected returns that can be forced by player 1 by different 
strategy sequences, and let $Value(M)$ denote the value of a (one-shot) matrix game
with payoff matrix $M$.  Then, evidently 
\be\label{rec}
\underline V= Value( A+ B\underline V),
\ee
that is, $\underline V$ is a fixed point of the map 
\be\label{Tmap}
T:V\to Value(A+BV).
\ee
This is just the dynamic programming principle of Bellman \cite{Sn}.
Likewise, the infimum $\overline V$ of expected returns that can be forced by player 2 is a fixed point of $T$.
Note that both or either could be $\pm \infty$ in general.  When $\underline V= \overline V=V$, we say
that the game {\it has value $V$}, similarly as in the one-shot matrix game case.

\subsection{Games with diminishing returns}\label{s:dim}
A particularly straightforward case is that of games with {\it diminishing returns}, i.e., satisfying
\be\label{dim}
\max_{ij}|B_{ij}|=\beta_0<1.
\ee
Since one-shot payoffs are bounded by construction, and stakes diminish at each stage by factor 
at most $(1-\beta_0)<1$, both total return and expected total payoff are convergent series to which the 
above reasoning applies.

\begin{example}\label{repeat}
	If players are required to repeat the same mixed strategy on each successive round, then,
	defining $\alpha:=\sum_{ij}x_iA_{ij}y_j$ and $\beta:=\sum_{ij}x_iB_{ij}y_j$,
	we find by geometric series that the total payoff to player 1 is
	$$
	\alpha+ \beta(\alpha + \beta(\alpha+\beta(\cdots= \frac{\alpha}{1-\beta}.
	$$
	Thus, the payoff function is $f(x,y)=\frac{x^TAy}{1-x^TBy}$. Observing that
	$\frac{\theta \alpha_1 + (1-\theta)\alpha_2}{1- \theta \beta_1+ (1-\theta )\beta_2}$
	for $0<\theta<1$ lies strictly between
	$\frac{\alpha_1 }{1- \beta_1}$ and $\frac{\alpha_2 }{1- \beta_2}$ unless
	$\frac{\alpha_1 }{1- \beta_1}=\frac{\alpha_2 }{1- \beta_2}$,
	we find that $f$ satisfies \eqref{weakcc}, hence, by Remark \ref{lessrmk}, 
	obeys the Minimax Theorem, guaranteeing a unique value of the game.
\end{example}

More generally, we have the following definitive result.

\begin{proposition}[\cite{E}]\label{dimprop}
	For generalized recursive games satisfying \eqref{bpos} and \eqref{dim}, 
	map $T$ of \eqref{Tmap} is contractive, and the game has value equal to
	its unique (finite) fixed point. Moreover, the value may be approximated by
	iteration, as the limit of $\{V_n\}$ defined by $V_{n+1}=T(V_n)$, $V_0=0$.
\end{proposition}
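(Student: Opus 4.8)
The plan is to prove the three assertions in turn: contractivity of $T$, existence and uniqueness of the fixed point via the Banach principle, and identification of that fixed point with the game value through a truncation argument. The computational heart is a single Lipschitz estimate for the value functional; the conceptual work lies in connecting the abstract fixed point to the indefinitely repeated game.

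First I would establish that $\text{Value}(\cdot)$ is $1$-Lipschitz in the entrywise sup-norm $\|M\|_\infty:=\max_{ij}|M_{ij}|$, namely $|\text{Value}(M)-\text{Value}(M')|\le \|M-M'\|_\infty$. This follows directly from the minimax representation $\text{Value}(M)=\max_x\min_y x^TMy$ over probability vectors $x,y$ furnished by Proposition \ref{fundthm}: since $x^T(M-M')y$ is a convex combination of the entries of $M-M'$, we have $x^TMy\le x^TM'y+\|M-M'\|_\infty$ for all $x,y$; applying $\min_y$ and then $\max_x$ (each of which merely shifts by the additive constant and preserves the inequality) yields $\text{Value}(M)\le \text{Value}(M')+\|M-M'\|_\infty$, and the reverse bound by symmetry. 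Taking $M=A+BV$ and $M'=A+BV'$ gives $\|M-M'\|_\infty=|V-V'|\max_{ij}|B_{ij}|=\beta_0|V-V'|$, so by \eqref{dim} the map $T$ of \eqref{Tmap} obeys $|T(V)-T(V')|\le \beta_0|V-V'|$ with $\beta_0<1$. Thus $T$ is a contraction of the complete space $\R$, and the Banach fixed-point theorem provides a unique finite fixed point $V^*$ together with convergence $V_{n+1}=T(V_n)\to V^*$ from any initial guess, in particular $V_0=0$; this already yields the uniqueness and the iterative-approximation claims.

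It remains to show $\underline V=\overline V=V^*$. Here I would introduce the $N$-round truncated game, in which play is forced to terminate with terminal payoff $0$ after at most $N$ rounds. This is a finite game, and since our single game element recurs with value scaling linearly in the stakes, backward induction together with Proposition \ref{fundthm} at each stage shows that its value $V^{(N)}$ satisfies $V^{(0)}=0$ and $V^{(N+1)}=\text{Value}(A+BV^{(N)})=T(V^{(N)})$, whence $V^{(N)}=T^N(0)\to V^*$. The diminishing-returns hypothesis then controls the truncation error: writing $a_0:=\max_{ij}|A_{ij}|$, the stakes entering round $k$ are bounded by $\beta_0^k$ on account of \eqref{bpos}--\eqref{dim}, so the total payoff accrued after round $N$ is at most $\sum_{k\ge N}\beta_0^k a_0=\beta_0^N a_0/(1-\beta_0)=:\varepsilon_N$ in absolute value. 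Consequently, by playing a truncated-optimal strategy for the first $N$ rounds player 1 forces at least $V^{(N)}-\varepsilon_N$ in the full game, so $\underline V\ge V^{(N)}-\varepsilon_N$, while player 2 symmetrically caps the return at $V^{(N)}+\varepsilon_N$, so $\overline V\le V^{(N)}+\varepsilon_N$. Since trivially $\underline V\le \overline V$, letting $N\to\infty$ squeezes $V^*\le \underline V\le \overline V\le V^*$, giving the value and completing the proof.

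The contraction estimate is routine once the Lipschitz property of $\text{Value}$ is in hand. The step demanding genuine care is the final identification: the fixed point produced by Banach's theorem is an abstract object, whereas $\underline V,\overline V$ are suprema and infima of returns forceable over all strategies in indefinitely repeated play, and it is precisely the hypothesis $\beta_0<1$ that makes the infinite game the limit of its finite truncations. I therefore expect the tail estimate $\varepsilon_N\to 0$—and the assertion $V^{(N)}=T^N(0)$ obtained by backward induction—to be the main obstacle; once the geometric decay of stakes is established, the squeeze is immediate.
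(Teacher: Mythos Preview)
Your argument is correct. The Lipschitz estimate for $\text{Value}(\cdot)$, the contraction of $T$, and the truncation/tail-bound squeeze are all sound.

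The paper's proof is considerably shorter because it leans on the fact already recorded in \eqref{rec} that $\underline V$ and $\overline V$ are themselves fixed points of $T$. Once that is in hand, the geometric tail bound $|\underline V|,|\overline V|\le \alpha_0/(1-\beta_0)$ is invoked only to guarantee that these fixed points are \emph{finite}; contractivity then forces $\underline V=\overline V=V^*$ by uniqueness, with no truncated game ever introduced. Your route instead builds the identification from scratch: you define the $N$-round game, establish $V^{(N)}=T^N(0)$ by backward induction, and squeeze $\underline V,\overline V$ against $V^{(N)}\pm\varepsilon_N$. This is more work, but it has the virtue of being fully self-contained---you do not rely on the ``evidently'' in \eqref{rec}, which itself hides essentially the same dynamic-programming reasoning you make explicit. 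Either way the geometric decay of stakes is the engine; the paper uses it once for finiteness, you use it for the tail error.
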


\begin{proof}
	As $\max |A_{ij}|\leq \alpha_0$, finite, and $|B_{ij}|\leq \beta_0<1$, we find by comparison with geometric
	series that the expected value is bounded in absolute value by $\frac{\alpha}{1-\beta_0}$, 
	hence $\underline V$ and $\overline V$ are both finite fixed points of $T$.
	But, evidently, 
	$$
	|T(A+BV_1)-T(A+BV_2)| \leq \max |B_{ij}| |V_1-V_2|\leq \beta_0|V_1-V_2|,
	$$
	hence $T$ is contractive by \eqref{dim}.  It thus has a unique (finite) fixed point $V$ by the Contraction 
	Mapping Theorem, approximable by interation.
By uniqueness, moreover, $\underline V= \overline V=V$.
\end{proof}

\begin{corollary}\label{dimcor}
	For generalized recursive games satisfying \eqref{bpos} and \eqref{dim}, 
a necessary and sufficient condition that the game have value $V=0$ is
that $Value(A)=0$. In particular, value $V=0$ always for a symmetric generalized recursive game with diminishing returns.
\end{corollary}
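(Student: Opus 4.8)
The plan is to lean entirely on Proposition \ref{dimprop}, which under \eqref{bpos} and \eqref{dim} guarantees that the map $T(V)=Value(A+BV)$ has a unique finite fixed point, equal to the value $V$ of the game. The one observation that does all the work is that evaluating $T$ at $V=0$ gives $T(0)=Value(A+B\cdot 0)=Value(A)$. Thus the hypothesis $Value(A)=0$ says precisely that $0$ is a fixed point of $T$, and everything else is a consequence of uniqueness of that fixed point.

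With this identity in hand, both directions of the equivalence are immediate. First I would prove sufficiency: if $Value(A)=0$, then $T(0)=Value(A)=0$, so $0$ is a fixed point of $T$; by the uniqueness asserted in Proposition \ref{dimprop}, the value of the game satisfies $V=0$. For necessity, if the game has value $V=0$, then $V$ is the fixed point of $T$, and since $V=0$ we get $Value(A)=T(0)=T(V)=V=0$.

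For the final assertion I would reduce to checking $Value(A)=0$ for a symmetric game and then invoke the equivalence just proved. Viewing the symmetric recursive game as a zero-sum $2$-player game as set up in Section \ref{s:value}, symmetry forces the immediate-payoff matrix $A$ to be skew-symmetric, $A_{ij}=-A_{ji}$, once the two players' common strategy sets are identified. It then remains to verify that a skew-symmetric payoff matrix has value zero: let $(x^*,y^*)$ be a saddle point furnished by Proposition \ref{fundthm}, so that $v:=Value(A)=\min_y (x^*)^TAy=\max_x x^TA y^*$. Skew-symmetry gives $x^TAx=0$ for every $x$, so testing the first expression against $y=x^*$ yields $v\le (x^*)^TAx^*=0$, while testing the second against $x=y^*$ yields $v\ge (y^*)^TAy^*=0$; hence $v=0$, i.e.\ $Value(A)=0$, and the equivalence delivers $V=0$.

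There is no genuine analytic obstacle here: once the identity $T(0)=Value(A)$ is noted, the contraction-based uniqueness from Proposition \ref{dimprop} settles the main equivalence in a single line in each direction. The only step requiring real care is the symmetric case, where one must correctly translate the game-theoretic notion of symmetry (the game looks identical from each player's viewpoint) into the algebraic skew-symmetry $A_{ij}=-A_{ji}$; this is exactly where the zero-sum convention of Section \ref{s:detail} enters. After that translation, the classical value-zero property of skew-symmetric matrices completes the argument.
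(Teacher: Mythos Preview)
Your proof is correct and follows essentially the same approach as the paper: both rest on the observation $T(0)=Value(A)$ together with the uniqueness of the fixed point from Proposition \ref{dimprop}. For the symmetric case the paper is more terse, writing simply $Value(A)=Value(-A)=-Value(A)$, whereas you unpack this into the skew-symmetry $A_{ij}=-A_{ji}$ and a direct saddle-point check $x^TAx=0$; these are the same idea at different levels of detail.
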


\begin{proof}
	$V=0$ is a fixed-point of $T$ if and only if $0=Value(A+ B0)=Value(A)$.
	For a symmetric game, $Value (A)=Value (-A)=-Value(A)$, hence $Value(A)=0$, giving the result.
\end{proof}

In the above discussion, in defining expected return as the sum of an infinite series, we
implicitly used the fact that expected payoffs, by \eqref{dim}, converge uniformly independently
of chosen strategies as the number of rounds goes to infinity.
In fact it is not possible for real-world players to continue a game indefinitely; however, this issue
too can be sidestepped using \eqref{dim} by the obervation that stakes remaining to be played converge uniformly
to zero with the number of rounds, so that under any reasonable model of termination the value is arbitrarily close
to that of the complete series.

\subsection{Unbounded games}\label{s:unbounded}
For {\it unbounded games}, in which stakes may possibly increase without bound, we must be a bit more careful than
we have been about accounting of payoffs at intermediate times, since, different from the diminishing returns case,
the remaining stakes are not necessarily going to zero.
For example, in our accounting of Guts, we have computed payoffs by subtracting off the players ante at the termination of the game. Yet, even at intermediate times, these funds are encumbered, or ``owed'' by the player, and should be counted in negative payoff. From this point of view, if the game continues indefinitely, funds corresponding to antes are
indefinitely tied up and effectively lost to all players, so that $\underline V<\overline V$ and the game does not
have a traditional value.

To account for these considerations, we add for a general generalized recursive game $(A,B)$ a ``termination fee''
to the computed value at the $n$th round of $-t$ times the current stakes, where $t\geq 0$,
allowing a player to stop the game at stage $n$ before it is naturally concluded.
This could correspond, as in
the case of Guts as accounted here, simply to book-keeping/loss of an original ante, or it could arise from a
``buyout fee'' that a player must pay in order to exit the game before it is finished.
We will refer to factor $t$ as the ``termination constant.''

With this modification, we may define a truncated game, in which players are required to stop (either naturally,
or manually by execution of the termination clause) at stage less than or equal to some upper bound $n$.
We define $\underline{V}_n$ to be the maximum payoff to player 1 that is forcable by player 1 
in the $n$-truncated game, and $\overline{V}_n\geq \underline{V}_n$ to be the minimum payoff to player 1 that
is forcable by player 2.
Evidently, $\underline{V}_0=-t$, $\overline{V}_0=+t$, while, by the dynamic programming principle,
\be\label{truncrec}
\underline{V}_{n+1}=T(\underline{V}_n), \qquad \overline{V}_{n+1}=T(\overline{V}_n), 
\ee
where $T$ is the value map given in \eqref{Tmap}. This gives well-defined, and in principle computable
lower and upper values $\underline{V}_n$ and $\overline{V}_n$ for each $n$-truncated game.

Taking $n\to \infty$, we then define the total lower and upper values by
\be\label{totalvals}
\underline{V}:=\sup_{n\geq 0} \underline{V}_n, \qquad \overline{V}:=\sup_{n\geq 0} \overline{V}_n.
\ee
Note that this construction, when applied to the motivating example of guts,
amounts to the convention described in the introduction that monetary units in the pot are lost
to all parties unless claimed by finite (natural termination of the game.
It is straightforward, by monotonicity of the value map, to see that either 
$\underline{V}_n\equiv \underline{V}_0=-t$ or else $\underline{V}_n$ is strictly monotone increasing,
with $\underline{V}=\lim_{n\to \infty}\underline{V}_n$ similarly as in the iteration of 
Proposition \ref{dimprop} for the case of diminishing returns.
In particular, in the latter case, $\underline{V}$ is a fixed point of $T$.

\br\label{doublermk}
An example indicating the need for truncation is the (asymmetric)
generalized recursive game consisting of repeatedly flipping 
a coin, with payoff to player 1 of $+1$ for heads and $-1$ for tails and available strategies being
to quit after winning, quit after losing, double the stakes and continue after winning, or double the stakes and
continue after losing.
Without truncation, the strategy of quitting after winning and doubling after losing can be recognized
as the famous ``doubling strategy,'' or martingale betting system, apparently guaranteeing eventual
return of $+1$ to player 1, but in fact not realizable in finite lifespan
(Doob's optional stopping theorem \cite{BW}).
\er

With these definitions, we have the following results, comprising a toolkit for the treatment of
unbounded generalized recursive games.

\begin{theorem}\label{tthm}
	Suppose for an arbitrary single-state generalized recursive game with termination constant $-t<0$, 
	that a certain strategy for player 1 has associated payoffs $\alpha$,  $\beta $ satisfying 
	\be\label{tcond}
	\hbox{\rm $\alpha\geq \alpha_0\geq 0$, $\beta \geq 0$, and
$\alpha \geq t( \beta -1) +\eps$, for some $0<\eps<t$.}  
	\ee
	Then, the expected return $\underline{V}_n$ for the $n$-truncated game satisfies
	\be\label{Vnest}
	\underline{V}_n\geq v_n:=\alpha_0 -  \alpha_0 (1-\eps/t)^{\max\{0,n-1\}} -t(1-\eps/t)^n;
	\ee
	that is, the strategy can force a payoff with lower bound \emph{exponentially converging
	to $\alpha_0$}.

Moreover, the value $\underline V$ forcable by player 1 is the smallest fixed-point of \eqref{Tmap}
	that is greater than or equal to $-t$, which by \eqref{Vnest} is greater than or equal to $ \alpha_0$.
\end{theorem}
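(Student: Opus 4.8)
The plan is to reduce both assertions to a single monotone recursion plus a linear-programming estimate. Throughout set $r:=1-\eps/t\in(0,1)$ and $\beta^*:=(\alpha_0+t-\eps)/t$. First I record the recursion governing $V_n$. Playing the given strategy for one round returns the immediate payoff $\alpha$ and passes to the replayed game with expected continuation weight $\beta$ (the replay probability times the pot multiple, summed over outcomes); terminating that replayed game after $n-1$ further rounds therefore contributes $\beta V_{n-1}$, while terminating before any play costs the fee $-t$ times the unit stake. Thus $V_0=-t$ and $V_n=\alpha+\beta V_{n-1}$, where the round-$n$ pair $(\alpha,\beta)$ — which may depend on the opponents' worst-case response — satisfies the three conditions in \eqref{tcond}.

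The estimate \eqref{Vnest} I would prove by induction on $n$, with base case $V_0=-t=v_0$. For the inductive step, since $\beta\geq 0$ the hypothesis $V_{n-1}\geq v_{n-1}$ gives $V_n\geq \alpha+\beta v_{n-1}$, and it remains to minimize the right-hand side over the feasible set $\{\alpha\geq\alpha_0,\ \beta\geq 0,\ \alpha\geq t(\beta-1)+\eps\}$. This is a linear program in $(\alpha,\beta)$ whose minimum I would compute by cases on the sign of $v_{n-1}$: when $v_{n-1}\geq 0$ the minimum is $\alpha_0$, attained at $\beta=0$, and since one checks directly that $v_n\leq\alpha_0$ this already yields $V_n\geq\alpha_0\geq v_n$; when $v_{n-1}<0$ the minimum is $\alpha_0+\beta^* v_{n-1}$, attained at the vertex $\alpha=\alpha_0$, $\beta=\beta^*$. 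In the latter case the desired bound $\alpha_0+\beta^* v_{n-1}\geq v_n$ reduces, using the closed form $v_n=rv_{n-1}+\alpha_0(1-r)$ (valid for $n\geq 2$) and the identity $\beta^*-r=\alpha_0/t$, to $\tfrac{\alpha_0}{t}(rt+v_{n-1})\geq 0$; this holds because $v_n$ is nondecreasing (a one-line computation) with $v_{n-1}\geq v_1=-(t-\eps)=-rt$ for $n\geq 2$, and it holds with equality at $n=1$ since $\alpha_0+\beta^*(-t)=-t+\eps=v_1$. Letting $n\to\infty$ gives $v_n\to\alpha_0$, which is the advertised exponential convergence.

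For the second assertion I would exploit monotonicity of the value map. Since $B\geq 0$ by \eqref{bpos}, $M\mapsto Value(M)$ is monotone, hence so is $T$ in \eqref{Tmap}. Iterating from the bottom, set $\hat V_n:=T^n(-t)$; because $\hat V_1=T(-t)\geq V_1\geq v_1>-t=\hat V_0$ (player $1$ may use the given strategy in the one-round game), monotonicity makes $\{\hat V_n\}$ nondecreasing, so it converges — by continuity of $T$, and assuming the iterates stay bounded, else $\underline V=+\infty$ and nothing is to prove — to a fixed point $V_\infty$. A Kleene-type argument shows $V_\infty$ is the smallest fixed point $\geq -t$: any fixed point $V^*\geq -t$ satisfies $\hat V_n=T^n(-t)\leq T^n(V^*)=V^*$ for all $n$, whence $V_\infty\leq V^*$. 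Since $\underline V=Value(A+B\underline V)$ by \eqref{rec} (so $\underline V$ is itself a fixed point $\geq -t$) and the termination-fee accounting of Section \ref{s:unbounded} identifies the forcible value with the limit of the optimal truncated values $\hat V_n$, we get $\underline V=V_\infty$. Finally $\underline V=\lim_n\hat V_n\geq\lim_n v_n=\alpha_0$ by the first part.

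The main obstacle is the bookkeeping in the second assertion: justifying that the abstractly-defined forcible value $\underline V$ coincides with the monotone limit $V_\infty$ of the truncated optima — rather than some larger fixed point — requires carefully matching the intermediate-time accounting (encumbered antes, the fee $-t$ times current stakes) to the truncation $T^n(-t)$. By contrast the linear-programming step in the first part is routine once the feasible region is drawn; the only care needed is the sign split on $v_{n-1}$ and the short verification that $v_n$ is nondecreasing, so that $v_{n-1}\geq -rt$ holds at every step.
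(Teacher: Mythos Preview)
Your proof is correct and follows the same overall architecture as the paper: induction on $n$ for the bound \eqref{Vnest}, then monotonicity of the value map $T$ for the fixed-point assertion. The difference is in how you handle the inductive step. You treat $\alpha+\beta v_{n-1}$ as a linear program over the feasible triangle and split into cases on the sign of $v_{n-1}$, which forces you to verify the auxiliary monotonicity $v_{n-1}\geq -rt$ and the recursion $v_n=rv_{n-1}+\alpha_0(1-r)$. The paper instead avoids the case split entirely by first weakening $v_n$ to the cruder bound $v_n\geq -t r^n$ (dropping the nonnegative term $\alpha_0(1-r^{\max\{0,n-1\}})$); since $-tr^n\leq 0$ one may then substitute $\beta\leq 1+(\alpha-\eps)/t$ directly, obtaining
\[
V_{n+1}\geq \alpha-\beta t r^n\geq \alpha-(t+\alpha-\eps)r^n=\alpha(1-r^n)-tr^{n+1}\geq \alpha_0(1-r^n)-tr^{n+1}=v_{n+1}
\]
in one line with no sign analysis. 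Your LP argument is more systematic and makes the extremal $(\alpha,\beta)$ explicit; the paper's shortcut is shorter but hides why the particular form of $v_n$ works. For the second assertion your Kleene-iteration argument is more detailed than the paper's, which dispatches the claim in a single sentence via monotonicity of $T$; the self-identified ``obstacle'' you flag (matching $\underline V$ to $\lim_n T^n(-t)$) is indeed glossed over in the paper as well, so you are not missing anything the paper supplies.
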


\begin{proof}
	Evidently, \eqref{Vnest} holds for $n=0$, since $\underline{V}_0=-t=v_0$ is the cost
	of immediate exit. Suppose that \eqref{Vnest} holds at value $n$.
	Then, $\underline{V}_{n+1} \geq \alpha_1 + \beta_1v_{n}$.  Noting, by \eqref{tcond}, that
	$\beta_1\leq 1+\frac{\alpha_1-\eps}{t}$, we find that
	$$
	\begin{aligned}
		\underline{V}_{n+1} &\geq \alpha_1 -  \Big(1+\frac{\alpha_1-\eps}{t} \Big)t (1-\eps/t)^n \\
		&= \alpha_1 -  (t+\alpha_1-\eps ) (1-\eps/t)^n\\
		&= \alpha_1 \big(1- (1-\eps/t)^n\big) -  t (1-\eps/t)^{n+1} 
		\geq \alpha_0 (1- (1-\eps/t)^n) -  t (1-\eps/t)^{n+1} ,
	\end{aligned}
	$$
	verifying \eqref{Vnest} at value $n+1$.  By induction, the result holds for all $n\geq 0$.

	Observing that the value-map \eqref{Tmap} applied to $-t$ gives $\alpha_1 -t\beta_1$, 
	the first step of the inductive sequence, and further iterations give successive further steps,
	we see by monotone increase of the value map and \eqref{Vnest} that there can be no fixed point
	between $-t$ and $\underline V$, in particular none $\leq \alpha_0$.
\end{proof}

\br\label{zerormk}
This result gives sufficient conditions for 
a winning outcome $V\geq \alpha_0\geq 0$, though the estimate \eqref{Vnest}
is only a lower bound.  A necessary condition is $\alpha \geq t( \beta -1) $, or
$$
\alpha -t\beta  \geq -t,
$$
since otherwise the value at each round is strictly less than $-t\leq 0$.
\er

\br\label{tcondrmk}
Simple sufficient conditions for \eqref{tcond} are $\alpha_j \geq \alpha_0$ and $0\leq \beta_j \leq 1-\sigma$ for
$\sigma>0$. This is sufficient to treat many games with nonincreasing stakes, for example $2$-player Guts.
When $\alpha_j=0$, $0\leq \beta_j\leq 1-\sigma$ is also necessary for \eqref{tcond}.
More generally, conditions \eqref{tcond} are equivalent to
	\be\label{alst_tcond}
	\hbox{\rm $\alpha\geq \alpha_0$ and $0\leq \beta \leq  1+ \alpha/t -\sigma$, for some $\sigma >0$.}
	\ee
\er

The example $\alpha_j\equiv 0$, $\beta_j\equiv 1-\eps$ gives equality in \eqref{Vnest},
showing that this estimate is sharp.
This indicates a peculiarity of generalized recursive games, even those with non-increasing stakes:
that on any finite lifetime, there is a (small) probability that the game will not terminate,
hence there remain funds effectively lost to all players in the unclaimed pot.
Thus, a result like that of \eqref{Vnest} is the best one can expect in a symmetric game; for, if all
players pursue the same strategy as player 1, then $\alpha_j\equiv 0$, while $\beta_j$ in general is nonzero,
hence the outcome for all players is an exponentially diminishing negative payoff of the form of $v_n$
in \eqref{Vnest}.
Note, finally, that the strategy for player 1 may be of ``pure'' type as studied here, or of the general
mixed, random type introduced by von Neumann for general 2-player games; the result does not distinguish 
between these types.

In the special case of a symmetric, or other ``fair'' game, we can say much more.

\begin{corollary}\label{fairthm}
	In the special case that $V=0$ is a fixed-point of the value map \eqref{Tmap}, i.e., $\alpha, \beta\geq 0$,
	necessary and sufficient conditions that $\underline V=0$ are that $\alpha\geq 0$ and
$\alpha \geq t( \beta -1) +\eps$, for some $0<\eps<t$, for the optimal strategy of player 1.
Necessary and sufficient conditions that the game have value $V=\underline V=\overline V=0$
are that the analogous conditions $\alpha\leq 0$ and $-\alpha \geq t( \beta -1) +\eps$ hold for the optimal
	strategy of player 2, in which
case on the saddlepoint of simultaneous optimal solutions $\alpha=0$ and,
if $t\neq 0$, also $\beta\leq 1-\eps<1$.
\end{corollary}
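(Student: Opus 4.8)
The plan is to deduce the entire statement from Theorem~\ref{tthm}, Remark~\ref{zerormk}, and the characterization of $\underline V$ as the smallest fixed point of the value map \eqref{Tmap} that is $\geq -t$, together with the fact (as in Corollary~\ref{dimcor}) that ``$V=0$ is a fixed point of \eqref{Tmap}'' is equivalent to $Value(A)=0$. Throughout, $\alpha$ and $\beta$ denote the immediate return and stakes multiplier realized when player~1 plays the indicated strategy against player~2's best counter-response; recall $\beta\geq 0$ is automatic from \eqref{bpos}. For \emph{sufficiency} in the first statement I would simply observe that the stated conditions on player~1's optimal strategy, $\alpha\geq 0$ and $\alpha\geq t(\beta-1)+\eps$, are exactly hypotheses \eqref{tcond} of Theorem~\ref{tthm} with $\alpha_0=0$; that theorem then gives $\underline V\geq \alpha_0=0$, while the fact that $V=0$ is a fixed point with $0\geq -t$, together with $\underline V$ being the smallest such fixed point, gives $\underline V\leq 0$, whence $\underline V=0$.

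For \emph{necessity} in the first statement, suppose $\underline V=0$. Since $V=0$ a fixed point gives $Value(A)=0$, any maximin strategy of $A$ guarantees $\alpha\geq 0$, which is the first condition. For the second I would argue by a fixed-point gap: the value map is monotone increasing with $T(-t)\geq -t$ (player~1 can always guarantee $-t$ by immediate exit), and if $T(-t)=-t$ then $-t$ would be a fixed point $\geq -t$, forcing $\underline V\leq -t<0$ and contradicting $\underline V=0$. Hence $T(-t)=\max_x\min_y\big(\alpha(x,y)-t\beta(x,y)\big)>-t$, so some player~1 strategy achieves $\alpha-t\beta\geq -t+\eps$ against every response, i.e.\ $\alpha\geq t(\beta-1)+\eps$ with $0<\eps<t$ after shrinking $\eps$ if needed. \textbf{I expect the main obstacle to be precisely this necessity step}: Remark~\ref{zerormk} already records the \emph{weak} inequality $\alpha\geq t(\beta-1)$, but upgrading it to the strict $\eps$-version and tying it to a \emph{single} optimal strategy that simultaneously delivers $\alpha\geq 0$ requires care. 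I plan to resolve it by noting that $Value(A)=0$ forces $\alpha\geq 0$ for every maximin strategy of $A$, while the strict $\eps$-gap is nothing but the failure of $-t$ to be fixed; reconciling the two optimizers is the delicate bookkeeping, handled through the monotone fixed-point structure of Theorem~\ref{tthm}.

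For the second statement I would invoke the symmetry $A\mapsto -A$ interchanging the two players: the immediate payoff to player~2 is $-\alpha$, with $\beta$ and the termination constant $-t$ unchanged. Under this symmetry Theorem~\ref{tthm} and the first statement apply verbatim to player~2 and to $\overline V$, now characterized as the \emph{largest} fixed point $\leq t$, yielding $\overline V=0$ if and only if player~2's optimal strategy satisfies $-\alpha\geq 0$ and $-\alpha\geq t(\beta-1)+\eps$. Since the game has value $0$ exactly when $\underline V=\overline V=0$, combining the two characterizations gives the asserted necessary and sufficient conditions for $V=\underline V=\overline V=0$.

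Finally, for the saddle-point conclusions, at the simultaneous optimum player~1's optimal strategy guarantees $\alpha\geq 0$ (against every player~2 response, in particular player~2's optimum) while player~2's optimal strategy guarantees $\alpha\leq 0$; the two bounds squeeze the common value to $\alpha=0$. Substituting $\alpha=0$ into player~1's gap inequality $\alpha\geq t(\beta-1)+\eps$ gives $t(\beta-1)\leq -\eps$, so for $t\neq 0$ (hence $t>0$) we obtain $\beta\leq 1-\eps/t<1$, completing the statement.
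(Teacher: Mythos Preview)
Your argument is essentially the paper's own, only spelled out in greater detail. The paper's proof is extremely terse: sufficiency is attributed to Theorem~\ref{tthm}, necessity to the observation that $\inf(\alpha-\beta t)=-t$ forces expected return $-t$ at every step (your ``$T(-t)=-t$'' fixed-point formulation is exactly this), the player~2 conditions to the symmetric argument, and the saddle-point conclusions to adding the two $\eps$-inequalities. The only visible difference is in that last step: the paper \emph{adds} $\alpha\geq t(\beta-1)+\eps$ and $-\alpha\geq t(\beta-1)+\eps$ to obtain $0\geq 2t(\beta-1)+2\eps$ directly, whereas you first squeeze $\alpha=0$ and then substitute into one inequality. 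Both routes give the same bound; the paper's addition is marginally cleaner since it bypasses the intermediate squeeze. Your flagged concern about reconciling the two optimizers (one for $\alpha\geq 0$, one for the $\eps$-gap) is legitimate and is in fact glossed over in the paper as well; you are not missing anything the paper supplies.
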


\begin{proof}
	Sufficiency follows from Theorem \ref{tthm}. Necessity follows since
	$\inf(\alpha -  \beta t) = -t $, gives expected return $-t$ at every step, hence $\underline V=-t$.
	A symmetric argument for player 2 gives the corresponding condition
	$\inf(-\alpha -  \beta t) \geq -t $ for the optimal strategy of player 2.
	When players 1 and 2 both play optimal strategies, both of these conditions are in effect, hence, 
	adding, we obtain $-2\beta t\geq -2t+ 2\eps$, or $\beta<1-\eps$.
	Meanwhile $\alpha\geq 0$ and $\alpha\leq 0$ gives $\alpha=0$.
\end{proof}

\br\label{nolossrmk}
The conclusion that $\beta<1$ for optimal strategies when $t\neq 0$ corresponds to the intuition that, unless
remaining stakes diminish to zero as the number of rounds goes to infinity, there will always be $2t$ times
the expected remaining stakes $R$ that is lost to both players, hence a gap of $2tR$ between $\underline V_n$ and
$\overline V_n$. Due to this phenomeon
for unbounded generalized recursive games, we see that the fundamental theorem of games 
quite often may not hold.
When it does hold, it reduces on the saddlepoint of simultaneous optimal strategies effectively to a 
game of diminishing returns.
\er

\br\label{exrmks}
For any termination cost $-t$, $t\geq 0$, one may show using monotonicity of the value map \eqref{Tmap}
that $\underline V$ is less than or equal to the value of any fixed point of $T$ that is $\geq -t$,
in particular, less than or equal to any nonnegative fixed point: equivalently,
there exists no fixed point of the value map between $-t$ and $\underline V$.
Specifically, observing that any fixed point between $-t$ and $\underline V$ must lie between values $\underline V_n$ and 
$\underline V_{n+1}=T(\underline V_n)$ in the increasing sequence $\underline V_n\to \underline V$, we may use monotonicity to obtain a contradiction.
Similarly,  for any termination cost $-t$, $t\geq 0$, one may show that $\overline V$ is greater than or equal to
the value of any fixed point of $T$ that is less than or equal to $t$, in particular greater than or equal to 
any nonpositive fixed point.
\er


\subsection{Extension to the continuous case}\label{s:extensions}
Extensions to the continuous case are straightforward. 
Namely, in place of a finite matrix game with payoff $A_{ij}$
for strategies $i,j \in \ZZ^+$, one may consider a payoff function
$A(x,y)$, where $x$ and $y$ lie in compact subsets
$X \subset\R^m$ and $Y\subset \R^n$. Mixed strategies then take the form
$$
\mathcal{A}(P,Q):=\int_{\R^m\times \R^n}A(x,y) dP(x)dQ(y), 
$$
where $P$ and $Q$ are cumulative distribution functions for probability measures supported on $X$ and $Y$,
respectively.
So long as $A(x,y)$ is continuous on $X\times Y$, the payoff $\mathcal{A}(P,Q)$ then has well-defined 
minimax and maximin values, which are equal, and achieved at optima $P_*$ and $Q_*$; see, e.g.,
\cite[Ch. 6]{D}, \cite{F}. 
That is, the minimax theorem and fundamental theorem of games apply also to in this more general case.
Thus, for continuous payoff and stakes functions $A(x,y)$ and $B(x,y)$ we may define a value map
$T(V):=Value(A+BV)$ similar to \eqref{Tmap} in the finite-strategy case, and go on to carry out
all of the analysis of the section above in this larger, infinite-strategy case.

These conclusions apply in particular to our main example of continuous guts poker, since as we shall show just
below the payoff functions are indeed continuous for this game.
Here, $X,Y=[0,1]\subset \R$ for the 2-player case, and $X=[0,1]$, $Y=[0,1]^{n-1}$ for the $n$-player case,
which we have chosen to treat as a 1 vs. $(n-1)$-player game.


\section{Analysis of the continuous 2-player game}\label{s:analysis_2}
Having provided the necessary general framework, we are now ready to analyze the specific game of Guts,
starting with the 2-player case.
We first treat the continuous model, denoting by $p_1^*$ and $p_2^*$ the strategies of player 1 and
player 2, respectively, and seeking to determine the payoff function $\Psi(p_1^*,p_2^*)=\alpha + \beta V$ and,
utimately, the value of and optimal strategy for the game.
We then treat the original discrete model
by an adaptation of the arguments of the continuous case.

\subsection{Payoff function}\label{s:2pay}

\begin{proposition}\label{2payprop}
For continuous 2-player guts, the payoff function is $\Psi(p_1^*,p_2^*)=\alpha + \beta V$, where 
	\be\label{2beta}
\beta(p_1^*, p_2^*)= p_1^*p_2^*+ (1-p_1^*)(1-p_2^*)
\ee
and
	\be\label{2alpha}
	\alpha(p_1^*,p_2^*)=
	\begin{cases}
		(1-2p_1^*)(p_1^*-p_2^*) & p_2^*\leq p_1^*,\\
		(1-2p_2^*)(p_1^*-p_2^*) & p_2^*> p_1^*.
	\end{cases}
	\ee
\end{proposition}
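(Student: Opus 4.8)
The plan is to compute $\beta$ and $\alpha$ directly by enumerating the four possible first-round outcomes, determined by whether each player holds or drops. Since player $i$ holds exactly when $p_i\geq p_i^*$ with $p_i$ uniform on $[0,1]$, player $i$ holds with probability $1-p_i^*$ and drops with probability $p_i^*$, independently. Hence the four events---both drop, only player 1 holds, only player 2 holds, both hold---occur with probabilities $p_1^*p_2^*$, $(1-p_1^*)p_2^*$, $p_1^*(1-p_2^*)$, and $(1-p_1^*)(1-p_2^*)$, respectively.

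First I would pin down $\beta$. The round fails to terminate precisely in the two events where the game is replayed: when both drop (replay for the original pot) and when both hold (the $m=2$ holders grow the pot by the factor $m-1=1$, i.e.\ not at all, and the game is replayed). In either case the stakes multiplier is $1$, so $\beta$ equals the total replay probability $p_1^*p_2^*+(1-p_1^*)(1-p_2^*)$, which is \eqref{2beta}; the two terminating events contribute $0$ to $\beta$.

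Next I would assemble $\alpha$ as the expected immediate return over the four events, using the bookkeeping of Section \ref{s:detail}. When exactly one player holds, that player wins the pot and the game ends, so---with the ante settled at termination---the sole holder's net return is $n-1=1$ and the dropping player's is $-1$; this contributes $(1-p_1^*)p_2^*-p_1^*(1-p_2^*)=p_2^*-p_1^*$ to $\alpha$. When both hold, no ante is subtracted (the game continues), so by the bookkeeping the winning player receives $+n=+2$ and the losing holder $-2$; the expected contribution is therefore $2(P_{>}-P_{<})$, where $P_{>}$ (resp.\ $P_{<}$) is the probability that both hold and player 1's card exceeds (resp.\ is less than) player 2's.

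The only real obstacle is evaluating $P_{>}$ and $P_{<}$, which amounts to splitting the rectangle $[p_1^*,1]\times[p_2^*,1]$ (the both-hold region) by the diagonal $p_1=p_2$ and integrating. The position of this rectangle relative to the diagonal differs according to whether $p_2^*\leq p_1^*$ or $p_2^*>p_1^*$, and this is exactly the source of the case distinction in \eqref{2alpha}. A direct area computation gives a both-hold contribution of $2(p_1^*-p_2^*)(1-p_1^*)$ when $p_2^*\leq p_1^*$ and $2(p_1^*-p_2^*)(1-p_2^*)$ when $p_2^*>p_1^*$. Adding the terminating contribution $-(p_1^*-p_2^*)$ and factoring out $(p_1^*-p_2^*)$ yields $\alpha=(1-2p_1^*)(p_1^*-p_2^*)$ in the first case and $\alpha=(1-2p_2^*)(p_1^*-p_2^*)$ in the second, establishing \eqref{2alpha}. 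As a consistency check I would verify that either branch gives $\alpha(p^*,p^*)=0$, the symmetry relation predicted by \eqref{n0symm}, and that the immediate returns sum to zero across events, as required by the zero-sum property of $\alpha$.
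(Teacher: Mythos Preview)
Your proposal is correct and follows essentially the same route as the paper: enumerate the drop/hold outcomes, read off $\beta$ from the two replay events, and sum probability-weighted immediate returns for $\alpha$. The only organizational differences are that the paper splits the hold--hold event into a symmetric ``fair game'' piece (contribution $0$) and an asymmetric ``player 1 always wins'' piece rather than computing $P_{>}-P_{<}$ geometrically, and it handles the case $p_2^*>p_1^*$ by invoking the antisymmetry $\alpha(p_1^*,p_2^*)=-\alpha(p_2^*,p_1^*)$ rather than repeating the area computation; your parallel treatment of both orderings is arguably a bit cleaner.
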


\begin{proof}
It is clear that the game terminates unless both players drop, or both players hold, i.e.,
unless $0<p_1<p_1^*$ and $0<p_2<p_2^*$ or $p_1^*\leq p_1\leq 1$ and $p_2^*\leq p_2\leq 1$.
These are disjoint events with probabilities $p_1^*p_2^*$ and $(1-p_1^*)(1-p_2^*)$. 
Thus, the probability of replaying the game is $p_1^*p_2^*+ (1-p_1^*)(1-p_2^*)$, and, since the size of the pot
does not change in the $2$-player game, we have therefore immediately
$\beta(p_1^*, p_2^*)= p_1^*p_2^*+ (1-p_1^*)(1-p_2^*)$.

The determination of $\alpha$ requires consideration of a number of different cases.
	As observed previously, play repeats only if both players hold or both players drop, from which 
	\eqref{2beta} 
	mmediately follows.
	For $\alpha(p_1^*,p_2^*)$ with $p_2^*\leq p_1^*$, there are five cases:

	(i) $p_1\leq p_1^*$ and $p_2> p_2^*$ (drop-hold): player 2 wins, expected return $-1$,
	probability $p_1^*(1-p_2^*)$.

	(ii) $p_1\leq p_1^*$ and $p_2\leq p_2^*$ (drop-drop): both players drop, expected return $0$.

	(iii) $p_1, p_2> p_1^*$ (hold-hold): both hold, fair game, expected return $0$.

	(iv) $p_1> p_1^*$, $p_2\leq p_2^*$ (hold-drop): player 1 wins, expected return $+1$, probability
	$(1-p_1^*)p_2^*$.

	(v) $p_1> p_1^*$, $p_2^*\leq p_2 < p_1^*$ (hold-hold): both players hold, player 1 wins, 
	expected return $+2$, probability $(1-p_1^*)(p_1^*-p_2^*)$.

	Summing products of returns against probabilities, we obtain expected return
	$$
	\alpha(p_1^*,p_2^*)^*=
	p_1^*(1-p_2^*)(-1)+ (1-p_1^*)p_2^*(+1) + (1-p_1^*)(p_1^*-p_2^*)(+2)
	=(1-2p_1^*)(p_1^*-p_2^*)
	$$
	as claimed. To treat the case $p_1^*\leq p_2^*$, we observe by \eqref{nsymm_aux} that
	$ \alpha(p_1^*,p_2^*)^*=-\alpha(p_2^*, p_1^*), $ 
	which in this case gives (from the computation just above)
	$ \alpha(p_1^*,p_2^*)^*= -(1-2p_2^*)(p_2^*-p_1^*)= (1-2p_2^*)(p_1^*-p_2^*)$.
\end{proof}

{\bf Note:}
The payoff functions \eqref{2alpha}, \eqref{2beta} are analogous to payoff matrices $A$, $B$ in the finite
generalized recursive case, Section \ref{s:value}, describing the outcome of two ``pure'', or deterministic strategies 
$p_1^*$ and $p_2^*$. More generally, one may consider ``mixed'', or random, strategies consisting of
probability measures $d\mu_1^*$ and $d\mu_2^*$ on $p_1^*$ and $p_2^*$, 
for which the expected return is given by
\be\label{2mixed}
\int_0^1 \int_0^1 \alpha(p_1^*,p_2^*)d\mu_1^*d \mu_2^*.
\ee

In our analysis here, we shall not require this full generality, but only need to consider pure strategies
or finite random combinations of them: that is, {\it discrete probability theory}.

\subsection{Alternative computation}\label{s:2alt}
We mention also a different way of computing $\alpha$ that reduces the number of cases, based on perturbation
from the symmetric case.
We will make good use of this approach in more complicated situations later on.
Take without loss of generality $p_1^*\leq p_2^*$. By symmetry, $\alpha(p_2^*,p_2^*)=0$.  Thus, we can write
$\alpha(p_1^*,p_2^*)$ as the difference
\be\label{diffeq}
\alpha(p_1^*,p_2^*)- \alpha(p_2^*,p_2^*).
\ee

Note that this difference is zero event-by-event except when $p_1^*\leq p_1\leq p_2^*$, 
since otherwise the behavior of players 1 and 2 is identical for both strategy pairs 
$(p_1^*,p_2^*)$ and $(p_2^*,p_2^*)$. 
	Thus, we may condition on the case
	\be\label{condition}
	p_1^*\leq p_1\leq  p_2^*.
	\ee

	There are two subcases: (i) $p_2\geq p_2^*$, in which case player 2 holds and, because of \eqref{condition},
	always wins. (ii) $p_2<p_2^*$, in which case player 2 drops and thus always loses, {\it independent of
	the value of $p_1$ within range \eqref{condition}}.
	Meanwhile, the difference in payoff \eqref{diffeq} for player 1 between strategy $p_1^*$ and $p_2^*$
	is, by \eqref{condition}, the difference between player 1 holding and dropping: for
	case (i) (since they lose the whole pot if they hold but only their ante if they drop) $(-2)-(-1)=-1$.
	for case (ii) (since they win the ante if they hold and nothing if they drop) $(+1)-(0)=+1$.

	Computing that case (i) has probability $(p_2^*-p_1^*)(1-p_2^*)$ and case (ii) probability
	$(p_2^*-p_1^*)p_2^*$, we thus have an expected difference in return of
	$$
	(p_2^*-p_1^*)(1-p_2^*)(-1) + (p_2^*-p_1^*)p_2^*(+1) = 
	(p_2^*-p_1^*)(2p_2^*-1), 
	$$
	as claimed. The formula in case $p_1^*>p_2^*$ then follows by symmetry.

	\br\label{C1rmk}
	Interestingly, $\alpha$ is $C^1$ in $p_1^*,p_2^*$, matching at the boundary $p_1^*=p_2^*$.
	This property seems not a priori obvious; however, it can readily be seen by a conditional probability
	argument similar to the differencing argument just given. We will make use of this later on in our
	analysis of the n-player game; see, for example, the proof of Lemma \ref{derivlem}.
	\er

\subsection{Best response payoff and optimal strategy}\label{s:2bestresponse}
The (pure) {\it best response payoff} 
$$
R_2(p_1^*):= \min_{p_2^*} \alpha(p_1^*,p_2^*)
$$
is defined as the optimum (i.e., smallest) one-shot payoff $\alpha$ forceable by player 2 against a 
given pure strategy $p_1^*$ chosen by player 1.

\begin{lemma}
The best response payoff for $\alpha$ as given by \eqref{2alpha} is
\be\label{br}
R_2(p_1^*):= \min_{p_2^*} \alpha(p_1^*,p_2^*)=
\begin{cases}
	- \frac{(1-2p_1^*)^2}{8}<0, & p_1< 1/2,\\
	(1-2p_1^*) p_1^* \leq 0 , & p_1\geq 1/2.
\end{cases}
\ee
\end{lemma}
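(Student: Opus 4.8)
The plan is to fix $p_1^*$ and minimize the explicit piecewise function $\alpha(p_1^*,\cdot)$ of \eqref{2alpha} over $p_2^*\in[0,1]$, treating separately the two pieces determined by the sign of $p_2^*-p_1^*$. On the piece $p_2^*\le p_1^*$ the function $\alpha=(1-2p_1^*)(p_1^*-p_2^*)$ is affine in $p_2^*$ with slope $-(1-2p_1^*)$; on the piece $p_2^*>p_1^*$ the function $\alpha=(1-2p_2^*)(p_1^*-p_2^*)$ expands to the upward parabola $2(p_2^*)^2-(1+2p_1^*)p_2^*+p_1^*$ in $p_2^*$. I would minimize each piece on its admissible subinterval and then compare the two resulting candidate minima, which yields the claimed dichotomy at $p_1^*=1/2$.

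First I would handle the affine piece. Its slope $-(1-2p_1^*)$ is nonpositive exactly when $p_1^*\le 1/2$, so for $p_1^*<1/2$ the minimum over $[0,p_1^*]$ is attained at $p_2^*=p_1^*$ with value $0$, while for $p_1^*>1/2$ the slope is positive and the minimum is attained at $p_2^*=0$ with value $(1-2p_1^*)p_1^*<0$. Next I would handle the parabolic piece: its vertex sits at $p_2^*=(1+2p_1^*)/4$, and a one-line inequality shows this vertex lies strictly to the right of $p_1^*$ (hence in the interior of the admissible range $(p_1^*,1]$, and automatically below $1$) precisely when $p_1^*<1/2$. In that case the interior minimum evaluates, after completing the square, to $p_1^*-\tfrac{(1+2p_1^*)^2}{8}=-\tfrac{(1-2p_1^*)^2}{8}$; when $p_1^*\ge 1/2$ the vertex lies left of $p_1^*$, the parabola is increasing on $(p_1^*,1]$, and its infimum over that range is the boundary value $0$ attained as $p_2^*\to p_1^*$.

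Finally I would assemble the global minimum by comparing the two pieces. For $p_1^*<1/2$ the candidates are $0$ (affine) and $-\tfrac{(1-2p_1^*)^2}{8}<0$ (parabola), so the latter wins and gives the first branch of \eqref{br}. For $p_1^*>1/2$ the candidates are $(1-2p_1^*)p_1^*<0$ (affine) and $0$ (parabola), so the former wins and gives the second branch; the two formulas agree at $p_1^*=1/2$, where both evaluate to $0$, and continuity of $\alpha$ (indeed the $C^1$ matching noted in Remark \ref{C1rmk}) confirms the pieces join consistently at $p_2^*=p_1^*$.

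The computation is entirely elementary, so the only real care needed is bookkeeping: correctly locating the parabola's vertex relative to the moving endpoint $p_2^*=p_1^*$, and making sure that in each regime of $p_1^*$ the genuine global minimizer comes from the correct piece rather than an endpoint of the wrong one. The sign tracking in the vertex computation, namely verifying that $p_1^*-\tfrac{(1+2p_1^*)^2}{8}$ simplifies to $-\tfrac{(1-2p_1^*)^2}{8}$, is the single place where an algebra slip is most likely, and I would double-check it by noting that the resulting expression must vanish at $p_1^*=1/2$ in order to match the affine branch there.
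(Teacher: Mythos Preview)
Your proposal is correct and follows essentially the same approach as the paper's own proof: both split the minimization into the affine piece $p_2^*\le p_1^*$ and the convex quadratic piece $p_2^*>p_1^*$, locate the same vertex $(1+2p_1^*)/4$ and boundary minimizers, and then compare the two candidate values according to whether $p_1^*$ lies below or above $1/2$. The only cosmetic difference is that the paper identifies the quadratic's minimizer via its two roots $p_2^*=p_1^*$ and $p_2^*=1/2$ (so the vertex is their midpoint) rather than by expanding and completing the square as you do.
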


\begin{proof}
	For $p_2^*\leq p_1^*$, $\alpha$ is linear in $p_2$ with slope $(1-2p_1^*)$, hence $\min_{p_1^*}\alpha$
	is achieved at $p_2^*=0$ or $p_2^*=p_1^*$ according as $p_1^*\geq 1/2$ or $p_1^*\leq 1/2$, and
	thus 
	$$
	\min{0\leq p_2^*\leq p_1^*}\alpha(p_1^*,p_2^*)=\begin{cases} 0 & p_1^*\leq 1/2,\\
		(1-2p_1^*)p_1^* <0 & p_1^* > 1/2.\\
		\end{cases}
	$$

	For $p_2^*\geq p_1^*$, on the other hand,  $\alpha$ is quadratic in $p_2$ and convex, 
	with zeros at $p_2=1/2$ and $p_2=p_1$.
	For $p_1^*\geq 1/2$, therefore, its minimum is achieved at $p_2^*=p_1^*$, with value $0$,
	while for $p_1^*< 1/2$ its minimum is achieved at the interior critical point $(2p_1^*+1)/4$ 
	given by the average of $p_1^*$ and $1/2$, with value $- \frac{(1-2p_1^*)^2}{8}<0$.
	Combining this information, we find for $p_1^*\leq 1/2$, that the minimum of $\alpha(p_1^*,p_2^*)$ with
	respect to $p_2$ occurs at the interior critical point on $(p_1^*,1)$, 
	giving value $- \frac{(1-2p_1^*)^2}{8}<0$, while for $p_1^*>1/2$, it occurs at $p_2^*=0$,
	giving value $(1-2p_1^*)p_1^* <0$.
\end{proof}

\begin{corollary}\label{2opt}
The pure strategy 
\be\label{ppay}
	p_1^*={\rm argmax}_{p_1^*}R_2(p_1^*)=1/2,
\ee
is optimal for player 1, guaranteeing a nonnegative payoff.
\end{corollary}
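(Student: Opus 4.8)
The plan is to read optimality directly off the best response function \eqref{br} and then invoke Theorem \ref{tthm} to pass from the one-round immediate return to the full recursive payoff. First I would maximize $R_2$ over $p_1^*$. On the branch $p_1^* < 1/2$ we have $R_2(p_1^*) = -(1-2p_1^*)^2/8$, which is strictly negative and increases to $0$ as $p_1^* \uparrow 1/2$; on the branch $p_1^* \geq 1/2$ we have $R_2(p_1^*) = (1-2p_1^*)p_1^*$, which equals $0$ at $p_1^* = 1/2$ and is strictly negative for $p_1^* > 1/2$ (there $1-2p_1^* < 0$ while $p_1^* > 0$). Hence $R_2(p_1^*) \leq 0$ with equality only at $p_1^* = 1/2$, so $\mathrm{argmax}_{p_1^*} R_2(p_1^*) = 1/2$ and $R_2(1/2) = 0$, establishing \eqref{ppay} and confirming that $1/2$ is the optimal pure choice for player 1.

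Next I would translate this into the hypotheses of Theorem \ref{tthm}. By definition $R_2(1/2) = \min_{p_2^*}\alpha(1/2,p_2^*) = 0$, so the immediate return satisfies $\alpha(1/2,p_2^*) \geq 0$ for every pure opposing strategy $p_2^*$; since $\alpha$ enters the expected return linearly, as in \eqref{2mixed}, the same bound $\alpha \geq 0$ persists against any mixed, and any history-contingent, strategy of player 2. Moreover, \eqref{2beta} gives $\beta(1/2,p_2^*) = \tfrac12 p_2^* + \tfrac12(1-p_2^*) = \tfrac12$, independent of $p_2^*$. Thus, with the termination constant $t=1$ appropriate to Guts (the forfeited ante) and with $\alpha_0 = 0$, the fixed strategy $p_1^* = 1/2$ produces at every round the data $\alpha \geq 0 = \alpha_0$, $\beta = 1/2 \geq 0$, and $\alpha \geq 0 = t(\beta-1)+\eps$ upon taking $\eps = 1/2 \in (0,t)$; equivalently, this is precisely the nonincreasing-stakes case of Remark \ref{tcondrmk} with $\sigma = 1/2$.

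It then follows from Theorem \ref{tthm} that the return upon termination at round $n$ obeys $V_n \geq v_n = -(1/2)^n$, converging exponentially up to $\alpha_0 = 0$, and that the value $\underline V$ forcible by player 1 satisfies $\underline V \geq \alpha_0 = 0$. Hence $p_1^* = 1/2$ guarantees player 1 a nonnegative payoff, as claimed. The only point requiring care is the verification that the single-round bounds hold uniformly over all of player 2's admissible responses and at every stage of play, so that Theorem \ref{tthm} applies round-by-round; I expect this to be the main (though mild) obstacle. It is resolved here because the game has a single state, player 1's strategy is stationary, $\beta(1/2,\cdot) \equiv 1/2$ does not depend on player 2 at all, and $\alpha(1/2,\cdot) \geq 0$ holds for all pure, and hence by averaging all mixed, opposing strategies, so the hypotheses are met regardless of what player 2 does.
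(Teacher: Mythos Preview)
Your proof is correct and follows essentially the same approach as the paper: maximize $R_2$ from \eqref{br} to identify $p_1^*=1/2$ with value $0$, compute $\beta(1/2,p_2^*)=1/2$ from \eqref{2beta}, and invoke Theorem \ref{tthm}. Your version is slightly more detailed in explicitly analyzing the two branches of $R_2$ and in naming the parameters $t=1$, $\alpha_0=0$, $\eps=1/2$ needed for \eqref{tcond}, but the logical structure is identical.
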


\begin{proof}
The optimal pure strategy for player 1 is $ p_1^*={\rm argmax}_{p_1^*}R_2(p_1^*)$ 
by definition of the best response payoff, guaranteeing value $max_{p_1^*}R_2(p_1^*)=R_2(p_1^*)$. 
Consulting \eqref{br}, we find that the unique maximum of $R_2$ occurs at $p_1^*=1/2$, with value $0$.
Thus, this choice of pure strategy gives a nonnegative one-shot return $\alpha$, 
	which, by symmetry of the game, is optimal. Moreover, with $p_1^*=1/2$, \eqref{2beta} gives
	$\beta(p_1^*, p_2^*)= (1/2)(p_2^*+ (1-p_2^*))= 1/2<1$, hence, by Theorem \ref{tthm}, 
	guarantees together with $\alpha\geq 0$ a nonnegative return for the choice of strategy $p_1^*=1/2$.
\end{proof}

\br\label{saddlermk}
The existence of an optimal pure strategy for $\alpha$, though not its value, may be deduced from the minimax theorem,
Theorem \ref{mmthm}, observing that \eqref{2alpha}
is concave in $p_1^*$ and convex in $p_2^*$.
\er

\br\label{nomixrmk}
Though we did not state it, the pure strategy $p_1^*=1/2$ is the {\it unique }
optimal strategy for player 1.
Evidently it is the unique optimal pure solution, as $p_1^*=1/2$ is a strict maximum for $R_2(p_1^*)$.
Moreover, any mixed strategy will give inferior return.
For, if it contains any $p_1^*>1/2$ it can be penalized by the choice $p_2^*=1/2$.
If, on the other hand, it contains only $p_1^*\leq 1/2$, and is not equal to $1/2$ with probability one,
then it can be penalized by any $p_1^*$ lying strictly between $1/2$ and $\bar p_1$ 
defined as the mean value of $p_1^*$ under this probability distribution.
For, observing that $\alpha$ in \eqref{2alpha} is concave with respect to $p_1^*$, we have by Jensen's Theorem that
$\alpha(\bar p_1, p_2^*)$ is greater than or equal to the mean of $\alpha(p_1,p_2^*)$, i.e., the payoff for
the mixed strategy against $p_2^*$.
But, consulting \eqref{2alpha}, we find for $\bar p_1<p_2^*<1/2$ that $\alpha(\bar p_1, p_2^*)<0$, hence
the mixed strategy is non-optimal.

This same argument shows for any concave-convex payoff function that mixed strategies for either player are no 
better than the pure strategies given by their means, 
an interesting complement to the minimax theorem, Theorem \ref{mmthm}.
For payoff functions concave in the first argument, it shows that mixed strategies for player 1 are majorized
by the pure strategies given by their means.
\er

\br\label{Notermk}
If player 1 pursues the optimal strategy $p_1^*=1/2$, the payoff function reduces
to
	\be\label{redpay}
	\alpha(1/2,p_2^*)=
	\begin{cases}
		0 & p_2^*\leq 1/2 ,\\
		(1-2p_2^*)(1/2 -p_2^*)>0 & p_2^*> 1/2 .
	\end{cases}
	\ee
	Thus, {\it overcautious play} $p_2^*>1/2$ by player 2 is penalized, but {\it reckless play}
	$p_2^*<1/2$ is not. 
	
	Likewise, $p_2^*=1/2$ penalizes overcautious play $p_1^*>1/2$ by player 1 
	but not reckless play $p_1^*<1/2$;
	this is the reason for the subtlety of the analysis in Remark \ref{nomixrmk}.
	\er

\section{Analysis of the continuous 3-player game}\label{s:analysis_3}
We next consider the $3$-player game.
As described in the introduction, we will view player 1 as competing agains the remaining players $2$-$3$,
who choose a joint strategy without knowledge or communication of each others hands.
Hereafter, we restrict for simplicity to the continuous case.

\subsection{Payoff function}\label{s:3pay}

\begin{proposition}\label{3prop}
For $3$-player guts, the payoff function is $\Psi(p_1^*,p_2^*)=\alpha + \beta V$, where 
	\be\label{3beta}
  \beta=2-p_1^*-p_2^*-p_3^*+2p_1^*p_2^*p_3^*
	\ee
and
	\ba\label{3alpha}
	\alpha(p_1^*,p_2^*,p_3^*= 
	\begin{cases}
		2p_1^*-p_2^*-p_3^*+(p_3^*)^3+3(p_2^*)^2p_3^*-4p_1^*p_2^*p_3^*, &
		p_1^*<p_2^*<p_3^*,\\
		2p_1^*-p_3^*-p_2^*+(p_2^*)^3+3(p_3^*)^2p_2^*-4p_1^*p_2^*p_3^*, &
		p_1^*<p_3^*<p_2^*,\\
		2p_1^*-p_2^*-p_3^*+(p_3^*)^3-3(p_1^*)^2p_3^*+2p_1^*p_2^*p_3^*, &
		p_2^*<p_1^*<p_3^*\\
		2p_1^*-p_2^*-p_3^*+(p_2^*)^3-3(p_1^*)^2p_2^*+2p_1^*p_2^*p_3^*, &
		p_3^*<p_1^*<p_2^*,\\
		2p_1^*-p_2^*-p_3^*-2(p_1^*)^3+2p_1^*p_2^*p_3^*, &
		p_2^*<p_3^*<p_1^*,\\
		2p_1^*-p_2^*-p_3^*-2(p_1^*)^3+2p_1^*p_2^*p_3^*, &
		p_3^*<p_2^*<p_1^*.
	\end{cases}
	\ea
\end{proposition}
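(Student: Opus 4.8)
The plan is to compute the two components $\beta$ and $\alpha$ of the payoff separately by direct enumeration of the possible first-round outcomes, using the bookkeeping conventions of Section \ref{s:detail}, and to halve the casework in \eqref{3alpha} using the symmetry of $\alpha$ in $p_2^*,p_3^*$. For $\beta$ I would classify each outcome by the number $r$ of holding players. The round is replayed exactly when $r\neq 1$, carrying stakes multiplier $r-1$ for $r\geq 2$ and multiplier $1$ when all players drop ($r=0$). Hence $\beta$ is the sum of the all-drop term $p_1^*p_2^*p_3^*$, the two-holder term $P(r=2)=\sum_{i}p_i^*\prod_{j\neq i}(1-p_j^*)$ weighted by $1$, and the all-hold term $(1-p_1^*)(1-p_2^*)(1-p_3^*)$ weighted by $2$; expanding and collecting powers of the $p_i^*$ yields \eqref{3beta} with no case distinction.

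For $\alpha$, computed from player 1's standpoint, the immediate returns dictated by Section \ref{s:detail} are: when all three hold, $+4$ to the winner and $-2$ to each losing holder; when exactly two hold, $+3$ to the winner, $-3$ to the losing holder, and $0$ to the dropper; when exactly one holds, $+2$ to that player and $-1$ to each of the two droppers; and $0$ to everyone when all drop. I would condition first on player 1's own action. If player 1 drops, the only nonzero return is the $-1$ incurred when exactly one opponent holds, contributing $-p_1^*\big[(1-p_2^*)p_3^*+p_2^*(1-p_3^*)\big]$. If player 1 holds, I split according to how many opponents also hold: (a) none, so player 1 is sole holder and collects $+2$ with probability $(1-p_1^*)p_2^*p_3^*$; (b) exactly one, a two-player contest paying $\pm 3$ according as $p_1$ beats that opponent; (c) both, a three-player contest paying $+4$ or $-2$ according as player 1 holds the top card.

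Each winning probability is an integral over the holding region. In case (c), for instance, the chance that player 1 holds and wins is $\int_{p_1^*}^1 \ell_2(p_1)\,\ell_3(p_1)\,dp_1$, where $\ell_i(p_1)=\max\{0,\,p_1-p_i^*\}$ is the length of the losing-yet-holding interval $\{p_i^*\leq p_i<p_1\}$ of opponent $i$; case (b) gives single factors $\ell_i$, and the complementary losing probabilities are obtained by subtracting from the total holding probabilities. By invariance of $\alpha$ under $p_2^*\leftrightarrow p_3^*$ it suffices to carry out these integrations for the three orderings in which $p_1^*$ is respectively smallest, in the middle, or largest; the remaining three lines of \eqref{3alpha} then follow by relabeling $2\leftrightarrow 3$.

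The main obstacle is not conceptual but organizational: one must apply the virtual-ante increments uniformly across all four values of $r$, and one must respect the breakpoints of the $\ell_i$ inside $\int_{p_1^*}^1$, which move with the ordering — when $p_1^*$ is largest the integrand is a single polynomial on $[p_1^*,1]$, but when $p_1^*$ is smallest the factor $\ell_i$ vanishes on $[p_1^*,p_i^*]$, splitting the integral at $p_2^*$ and $p_3^*$. After expanding, the mixed terms cancel; I have verified the case $p_1^*$ largest, where $\int_{p_1^*}^1(p_1-p_2^*)(p_1-p_3^*)\,dp_1$ supplies the cubic $-2(p_1^*)^3$ and the remaining contributions collapse to $2p_1^*-p_2^*-p_3^*-2(p_1^*)^3+2p_1^*p_2^*p_3^*$, matching \eqref{3alpha}. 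I would cross-check the other orderings against the identities \eqref{n0symm} and \eqref{nsymm_aux} (in particular $\alpha(q,q,q)=0$) and against the $C^1$ matching across the ordering hyperplanes recorded in Remark \ref{C1rmk}. As a shortcut reducing the algebra, one may instead compute $\partial_{p_1^*}\alpha$ by the conditional-probability differencing of Section \ref{s:2alt} and integrate in $p_1^*$ from the symmetric baseline, which delivers the same polynomials case by case.
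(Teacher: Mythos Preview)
Your approach is correct and essentially the same as the paper's: both compute $\beta$ by summing stakes multipliers over the number of holders, and both compute $\alpha$ by direct event enumeration, using the $p_2^*\leftrightarrow p_3^*$ symmetry to reduce six orderings to three. The only organizational difference is that the paper, rather than integrating your $\ell_i(p_1)$ factors, splits the range $[p_1^*,1]$ at the thresholds $p_2^*,p_3^*$ and reads off the resulting probabilities directly (exploiting that certain subcases are ``fair games'' with zero expected return), which amounts to precomputing the very integrals you describe.
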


\begin{proof}
  As the expected value of the stakes multiplication factor for play in the next round,
	$$
	\begin{aligned}
		\beta&=(p_1^*p_2^*p_3^*)\times 1
		+[p_1^*p_2^*(1-p_3^*)+p_1^*p_3^*(1-p_2^*)+p_2^*p_3^*(1-p_1^*)]\times0\\
		&\quad +[p_1^*(1-p_2^*)(1-p_3^*)+p_2^*(1-p_1^*)(1-p_3^*)+p_3^*(1-p_1^*)(1-p_2^*)]\times1\\
		&\quad + [(1-p_1^*)(1-p_2^*)(1-p_3^*)]\times 2.
	\end{aligned}
		$$
	Simplifying, we obtain \eqref{3beta}.

In computing $\alpha$, there are 6 different situations to consider, which can be reduced to three pairs related by
symmetry in $p_2^*$, $p_3*$.
We list possible returns times their probabilities, then sum, to obtain the expected return for the first example of
each pair, to obtain the $\alpha$-function for that scenario. This yields the second item of the pair by symmetry,
giving 6 different $\alpha$-functions in all.
	
\medskip

	{\it Case 1. ($ p_1^*<p_2^*<p_3^*$ or $ p_1 ^*<p_3^*<p_2^*$).}
	Summing over the table
	$$
	\begin{aligned}
		&(-1)\times p_1^*(p_2^*+p_3^*-2p_2^*p_3^*)\\
		&2\times (1-p_1^*)p_2^*p_3^*\\
		&(-3)\times( (p_2^*-p_1^*)(1-p_2^*)p_3^*+(p_3^*-p_1^*)p_2^*(1-p_3^*))\\
		&(-2)\times(p_3^*-p_1^*)(1-p_2^*)(1-p_3^*)\\
		&1\times (1-p_3^*)^2(p_3^*-p_2^*),
	\end{aligned}
	$$
we obtain

$$
	\begin{aligned}
		\alpha_1(p_1^*<p_2^*<p_3^*)&= 2p_1^*-p_2^*-p_3^*+(p_3^*)^3+3(p_2^*)^2p_3^*-4p_1^*p_2^*p_3^*,\\
		\alpha_2(p_1^*<p_3^*<p_2^*)&= 2p_1^*-p_3^*-p_2^*+(p_2^*)^3+3(p_3^*)^2p_2^*-4p_1^*p_2^*p_3^*.
	\end{aligned}
	$$

	\medskip

	{\it Case 2. ($ p_2^*<p_1^*<p_3^*$ or $ p_3 ^*<p_1^*<p_2^*$).} Similarly, we compute
	$$
	\begin{aligned}
		&3\times (1-p_1)(p_1-p_2)p_3\\
		&2\times (1-p_1)p_2p_3\\
		&(-1)\times (p_1p_2(1-p_3)+p_1p_3(1-p_2))\\
		&(-3)\times (1-p_3)(p_3-p_1)p_2\\
		&(-2)\times (1-p_3)(p_3-p_1)(1-p_2)\\
		& 1\times (1-p_3)^2(p_3-p_2),\\
	\end{aligned}
	$$
	giving
$$
	\begin{aligned}
		\alpha_3(p_2^*<p_1^*<p_3^*)&= 2p_1^*-p_2^*-p_3^*+(p_3^*)^3-3(p_1^*)^2p_3^*+2p_1^*p_2^*p_3^*,\\
		\alpha_4(p_3^*<p_1^*<p_2^*)&= 2p_1^*-p_2^*-p_3^*+(p_2^*)^3-3(p_1^*)^2p_2^*+2p_1^*p_2^*p_3^*.
	\end{aligned}
	$$

	\medskip

	{\it Case 3. ($ p_2^*<p_3^*<p_1^*$ or $ p_3 ^*<p_2^*<p_1^*$).} Finally, we have
$$
	\begin{aligned}
		&(-1)\times (p_1^*p_2^*(1-p_3^*)+p_1^*p_3^*(1-p_2^*))\\
		&2\times (1-p_1^*)p_2^*p_3^*\\
		&3\times ((1-p_1^*)(p_1^*-p_2^*)p_3^*+(1-p_1^*)p_2^*(p_1^*-p_3^*))\\
		&4 \times (1-p_1^*)(p_1^*-p_2^*)(p_1^*-p_3^*)\\
		&1\times (1-p_1^*)^2(2p_1^*-p_2^*-p_3^*),\\
	\end{aligned}
	$$
giving 
$$
	\begin{aligned}
		\alpha_5(p_2^*<p_3^*<p_1^*)&= 2p_1^*-p_2^*-p_3^*-2(p_1^*)^3+2p_1^*p_2^*p_3^*,\\
		\alpha_6(p_3^*<p_2^*<p_1^*)&= 2p_1^*-p_2^*-p_3^*-2(p_1^*)^3+2p_1^*p_2^*p_3^*,
	\end{aligned}
	$$
	completing the proof.
\end{proof}

\br\label{checksumrmk}
For $p^*_1<p^*_2<p^*_3$, we have
$$
\begin{aligned}
	\alpha(p_1^*,p_2^*,p_3^*)&=\alpha_1=2p_1^*-p_2^*-p_3^*+(p_3^*)^3+3(p_2^*)^2p_3^*-4p_1^*p_2^*p_3^*\\
	\alpha(p_2^*,p_1^*,p_3^*)&=\alpha_2=2p_2^*-p_1^*-p_3^*+(p_3^*)^3-3(p_2^*)^2p_3^*+2p_1^*p_2^*p_3^*\\
	\alpha(p_3^*,p_1^*,p_2^*)&=\alpha_3=2p_3^*-p_2^*-p_1^*-2(p_3^*)^3+2p_1^*p_2^*p_3^*,\\
\end{aligned}
$$
  hence, summing, $\alpha(p_1^*,p_2^*,p_3^*)+\alpha(p_2^*,p_1^*,p_3^*)+\alpha(p_3^*,p_1^*,p_2^*)=0$,
  verifying symmetry \eqref{nsym2}.
  \er

  \subsubsection{Bloc case}\label{s:3bloc} 
  Specializing Proposition \ref{3prop} to the case of bloc strategies $p_2^*=p_3^*$ for players 2-3, 
  we obtain the following result generalizing Proposition \ref{2payprop} of the 2-player case.

  \begin{corollary}\label{3bloccor}
	  For bloc strategies,
	  \be\label{3blocbeta}
	  \beta(p_1^*,p_2^*,p_2^*)=2-p_1^*-2p_2^*+2p_1^*(p_2^*)^2
	  \ee
	  and
	  \be\label{3blocalpha}
	  \alpha(p_1^*,p_2^*,p_2^*)= 
	  \begin{cases}
		  2(p_2^*-p_1^*)\big(2(p_2^*)^{2}-1\big),& p_1^*\leq p_2^*\\
  2(p_2^*-p_1^*)((p_1^*)^2+ p_1^*p_2^*-1) &  p_1^*\geq p_2^*.
	  \end{cases}
	  \ee
  \end{corollary}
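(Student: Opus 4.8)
The plan is to obtain both formulas by direct substitution of $p_3^*=p_2^*$ into the expressions of Proposition \ref{3prop}, the only care needed being the bookkeeping of how the six-case partition of \eqref{3alpha} degenerates on the bloc line.

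For $\beta$, substituting $p_3^*=p_2^*$ into \eqref{3beta} is immediate: the linear terms $-p_2^*-p_3^*$ collapse to $-2p_2^*$ and the term $2p_1^*p_2^*p_3^*$ becomes $2p_1^*(p_2^*)^2$, yielding \eqref{3blocbeta} at once.

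For $\alpha$, I would first observe that the hyperplane $p_2^*=p_3^*$ lies on the common boundary of the six regions in \eqref{3alpha}, so I must identify which cases limit onto it according to where $p_1^*$ sits. When $p_1^*\leq p_2^*$, the bloc line is the closure of case 1 ($p_1^*<p_2^*<p_3^*$) and case 2 ($p_1^*<p_3^*<p_2^*$); when $p_1^*\geq p_2^*$, it is the closure of cases 5 and 6 ($p_2^*<p_3^*<p_1^*$ and $p_3^*<p_2^*<p_1^*$). The intermediate cases 3 and 4, in which $p_1^*$ lies strictly between $p_2^*$ and $p_3^*$, become vacuous once $p_2^*=p_3^*$. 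Setting $p_3^*=p_2^*$ in $\alpha_1$ (or equivalently $\alpha_2$) gives $2p_1^*-2p_2^*+4(p_2^*)^3-4p_1^*(p_2^*)^2$, which factors as $2(p_2^*-p_1^*)(2(p_2^*)^2-1)$; setting $p_3^*=p_2^*$ in $\alpha_5$ (which is identical to $\alpha_6$) gives $2p_1^*-2p_2^*-2(p_1^*)^3+2p_1^*(p_2^*)^2$, which factors as $2(p_2^*-p_1^*)((p_1^*)^2+p_1^*p_2^*-1)$. These are exactly the two branches of \eqref{3blocalpha}.

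The only substantive check, rather than a genuine obstacle, is well-definedness of the substitution. I would confirm that $\alpha_1$ and $\alpha_2$ agree once $p_3^*=p_2^*$ (the blocks $(p_3^*)^3+3(p_2^*)^2p_3^*$ and $(p_2^*)^3+3(p_3^*)^2p_2^*$ both reduce to $4(p_2^*)^3$), that cases 5 and 6 are literally the same formula in \eqref{3alpha}, and that the two resulting branches match at $p_1^*=p_2^*$, where both vanish, consistent with the $C^1$ regularity noted in Remark \ref{C1rmk}. No deeper difficulty arises: the entire content is the case bookkeeping together with the two short factorizations above.
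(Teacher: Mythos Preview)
Your proposal is correct and matches the paper's approach: the corollary is stated as an immediate specialization of Proposition~\ref{3prop} to $p_3^*=p_2^*$, and the paper gives no further argument beyond that remark. Your careful case bookkeeping and the two factorizations are exactly what the specialization amounts to.
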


  \br\label{blocproprmk}
  For the bloc case as in the 2-player case, $\alpha$ is $C^1$, matching at the boundary $p_1^*=p_2^*$,
  and is concave-convex: that is, concave in $p_1^*$ and convex in $p_2^*$.  It follows from the minimax theorem,
  Theorem \ref{mmthm}, that there exist pure strategies $p_1^*$ and $p_2^*$ forcing $\alpha\geq 0$ and
  $\alpha\leq 0$, respectively.
  \er

  \subsection{Best response function and optimal strategy}\label{s:3br}
  With payoff functions in hand, we now investigate optimal responses and strategies for the 3-player game,
  both bloc and otherwise.

  \subsubsection{Bloc case}\label{s:3blocstrat}
  We start by identifying the optimal strategies predicted in Remark \ref{blocproprmk}.

  \begin{proposition}[Optimal bloc strategy]\label{3blocprop}
	  For $p_2^*=p_3^*$, $p_1^*=1/\sqrt{2}$ is optimal, guaranteeing nonnegative return, 
	  with
	  $\alpha\geq 4(p_2^*-1/\sqrt{2})^2(p_2^*+ 1/\sqrt{2})$ for $1/\sqrt{2}\leq p_2^*$ and
	  $\alpha \geq 2(p_2^*-1/\sqrt{2})^2/\sqrt{2}$ for   $1/ \sqrt{2}\geq p_2^*$.
	  Likewise, $p_2^*=1/\sqrt{2}$ is optimal, guaranteeing nonpositive return.
  \end{proposition}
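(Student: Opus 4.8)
The plan is to substitute the candidate cutoff $p_1^*=1/\sqrt2$ into the bloc payoff formulas \eqref{3blocalpha} and \eqref{3blocbeta} of Corollary \ref{3bloccor}, use the identity $2(1/\sqrt2)^2-1=0$ to factor $\alpha$, and then check the hypotheses of Theorem \ref{tthm} with termination constant $t=1$. Write $c=1/\sqrt2$, so $c^2=1/2$. For $p_2^*\ge c$ the factorization $2(p_2^*)^2-1=2(p_2^*-c)(p_2^*+c)$ reduces the top branch of \eqref{3blocalpha} to $\alpha=4(p_2^*-c)^2(p_2^*+c)$, while for $p_2^*\le c$ the identity $c^2+cp_2^*-1=c(p_2^*-c)$ reduces the bottom branch to $\alpha=2c(p_2^*-c)^2$. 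Both are manifestly nonnegative and coincide with the asserted lower bounds (indeed with equality), so the nonnegativity of $\alpha$ and the two explicit estimates follow at once.

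The substantive point, and what separates the three-player game from the two-player game of Corollary \ref{2opt}, is that $\beta$ need not lie below $1$: at $p_2^*=0$ formula \eqref{3blocbeta} gives $\beta(c,0,0)=2-c>1$, so the simple criterion of Remark \ref{tcondrmk} is unavailable and I must verify the full inequality $\alpha\ge t(\beta-1)+\eps$ of \eqref{tcond}. With $t=1$ this asks that $g(p_2^*):=1+\alpha-\beta$ be bounded below by a positive constant, uniformly in $p_2^*\in[0,1]$. I would establish this by splitting at $p_2^*=c$. On the range $p_2^*\le c$, inserting $\alpha=2c(p_2^*-c)^2$ and $\beta=2-c-2p_2^*+2c(p_2^*)^2$ produces an exact cancellation of all $p_2^*$-dependence, leaving $g\equiv 2c-1=\sqrt2-1$; this is precisely the cancellation that rescues the criterion on the region where $\beta>1$. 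On the range $p_2^*\ge c$, a short simplification gives $g(p_2^*)=4(p_2^*)^3-6c(p_2^*)^2+(3c-1)$, with derivative $12p_2^*(p_2^*-c)\ge0$, so $g$ increases from $g(c)=2c-1$. Hence $g\ge\sqrt2-1>0$ on all of $[0,1]$; since also $\alpha\ge\alpha_0=0$ and $\beta\ge 2-2c>0$, applying Theorem \ref{tthm} with $\eps=\sqrt2-1\in(0,1)=(0,t)$ yields the forced nonnegative return for player $1$.

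For the dual statement I would instead freeze the coalition at $p_2^*=p_3^*=c$ and vary $p_1^*$, invoking the player-$2$ form of Corollary \ref{fairthm}. Here the arithmetic is cleaner: since $(p_2^*)^2=1/2$, the $p_1^*$ term in \eqref{3blocbeta} cancels and $\beta(p_1^*,c,c)=2-2c=2-\sqrt2<1$ is constant. Moreover $\alpha(p_1^*,c,c)=0$ for $p_1^*\le c$ (again by $2c^2-1=0$), while for $p_1^*\ge c$ one has $\alpha=2(c-p_1^*)\big((p_1^*)^2+cp_1^*-1\big)$, whose first factor is $\le0$ and whose second factor is $\ge0$ (it vanishes at $p_1^*=c$ and increases thereafter); thus $\alpha\le0$ for every $p_1^*$. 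With $\alpha\le0$ and $\beta<1$ the player-$2$ hypotheses of Theorem \ref{tthm} hold with $\sigma=\sqrt2-1$, giving a forced nonpositive return. The two forcing statements together pin the bloc value at $0$ and so establish optimality of $c$ for each side.

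I expect the only real obstacle to be the uniform bound in the second paragraph: one must notice that $g$ is identically constant on $p_2^*\le c$, so that the condition survives despite $\beta>1$ there, and dispatch the cubic on $p_2^*\ge c$ through its factored derivative, rather than hoping for a single global estimate. Everything else is bookkeeping built on the single identity $2c^2-1=0$.
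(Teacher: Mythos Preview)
Your proof is correct and follows essentially the same approach as the paper. Both substitute $p_1^*=1/\sqrt2$, factor $\alpha$ via $2c^2-1=0$ to obtain the stated quadratic lower bounds, and then verify condition \eqref{tcond} by showing $\beta-\alpha\leq 2-\sqrt2<1$; the only cosmetic difference is that the paper completes the square in $\beta$ and compares coefficients of $(p_2^*-1/\sqrt2)^2$, whereas you compute $g=1+\alpha-\beta$ directly and observe it is constant on $[0,c]$ and increasing on $[c,1]$, arriving at the identical bound $g\geq\sqrt2-1$. The dual argument for $p_2^*=1/\sqrt2$ is likewise the same.
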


  \begin{proof}
	  Direct substitution of $p_1^*=1/\sqrt{2}$ into \eqref{3blocalpha} yields the stated bounds on $\alpha$
	  giving $\alpha\geq 0$.
	  Likewise, substituting $p_1^*=1/\sqrt{2}$ into \eqref{3blocbeta} and completing the square yields 
	  $$
	 \beta=
	  2- 1/\sqrt{2} -2p_2^*+\sqrt{2}(p_2^*)^2= c+ \sqrt{2}(p_2^*-1/\sqrt{2})^2,
	 $$
	  whence $\beta-\alpha \leq c + (\sqrt{2}-d_\pm) (p_2^*-1/\sqrt{2})^2$
	  with value $c= 2(1-1/\sqrt{2})<1$ and $d_\pm= 4/\sqrt{2}, 2/\sqrt{2}$.
	  In particular, $\sqrt{2}-d_\pm \leq 0$, and so the above expression bounding
	  $\beta-\alpha$ is minimized at $p_2^*=1/\sqrt{2}$,
	  giving $\beta-\alpha\leq c<1$. 
	  Applying Theorem \ref{tthm}, we find that $p_1^*=1/\sqrt{2}$ guarantees
	  a nonnegative return to player 1.
	  Similarly, substituting $p_2^*=1/\sqrt{2}$ gives return
	  $$
	  \alpha=\begin{cases}
		  0, & p_1^*\leq 1/\sqrt{2},\\
		  2(1/\sqrt{2}-p_1^*)((p_1^*)^2+ p_1^*/\sqrt{2} -1) & p_1^*\geq 1/\sqrt{2},
	  \end{cases}
	  $$
	  which is $\leq 0$.
	  Meanwhile $\beta(p_1^*,1/\sqrt{2},1/\sqrt{2})=2-\sqrt{2}<1$ independent of $p_1^*$, hence 
	  by another application of Theorem \ref{tthm}, we have that $p_2^*=1/\sqrt{2}$ guarantees a nonpositive
	  return.
  \end{proof}

  \br\label{fullrmk}
  Note that $\beta(1/\sqrt{2},p_2^*)<1$ does not hold for all $p_2^*$; that is, we have used the full power
  of Theorem \ref{tthm} to obtain this result.
  \er

  \br\label{3uniquermk}
  As in Remark \ref{nomixrmk} (2-player case), one may show that the pure strategy $p_1^*=1/\sqrt{2}$
  is the unique optimal strategy for player 1 against bloc strategies for players 2-3.
  Thus, for the general (nonbloc) game \emph{it is the unique candidate for a strategy guaranteeing
  nonnegative return}.
  \er

  \subsubsection{Nash equilibrium}\label{s:3Nash}
  Proposition \ref{3blocprop} includes the important consequence that 
  $$
  (p_1^*,p_2^*,p_3^*)=(1/\sqrt{2}, 1/\sqrt{2}, 1/\sqrt{2})
  $$
  is a {\bf symmetric Nash equilibrium}
  (see the discussion of the introduction) {\bf for the full} (unrestricted) {\bf 3-player game},
  i.e., $p_2^*=p_3^*=1/\sqrt{2}$ penalizes any deviation of $p_1^*$ from $p_1^*=1/\sqrt{2}$.

  \subsubsection{General case}\label{s:3gen}
  Next, we determine the best response function for the full (nonbloc) game.

  \begin{proposition}\label{3br}
For continuous 3-player Guts, the best response function is given by 
	  \be\label{R}
	  R(p_1^*):=\min_{p_2^*,p_3^*}\alpha(p^*_1,p^*_2,p^*_3)= \min \{\alpha_a(p_1^*), \alpha_b(p_1^*)\},
\ee
	  with respective values
	  \ba\label{3breq}
	  \alpha_a&=-\frac{1}{27}\Big((4(p_1^*)^2+6)^{\frac{3}{2}}+8(p_1^*)^3-36p_1^*\Big),\\
	  \alpha_b &=-\frac{2}{27}\Big((9(p_1^*)^2+3)^{\frac{3}{2}}-27p_1^*\Big) ,
	  \ea
achieved at $(p_2^*, p_3^*)=((\sqrt{4(p_1^*)^2+6}-2p_1^*)^{-1}, (\sqrt{4(p_1^*)^2+6}-2p_1^*)^{-1})$ and
	  $(p_2^*, p_3^*)=\Big(0, \sqrt{\frac{3(p_1^*)^2+1}{3}}\Big)$.
  \end{proposition}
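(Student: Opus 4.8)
The plan is to minimize the piecewise cubic $\alpha$ of \eqref{3alpha} over $(p_2^*,p_3^*)\in[0,1]^2$ with $p_1^*$ held fixed, exploiting that $\alpha$ is symmetric in $(p_2^*,p_3^*)$ and, by Remark \ref{C1rmk}, $C^1$ across the internal diagonals separating the six ordering-regions. Symmetry lets me restrict to $p_2^*\le p_3^*$, leaving the three regions governed by $\alpha_1$ (for $p_1^*\le p_2^*\le p_3^*$), $\alpha_3$ (for $p_2^*\le p_1^*\le p_3^*$), and $\alpha_5$ (for $p_2^*\le p_3^*\le p_1^*$). Since $\alpha$ is continuous on the compact square, the minimum is attained either at an interior critical point or on $\partial([0,1]^2)$, and I would treat these two possibilities in turn.

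First I would classify interior critical points by computing the two partials in each region. In the $\alpha_5$- and $\alpha_3$-regions one partial is the affine expression $-1+2p_1^*p_3^*$ (resp.\ $-1+2p_1^*p_2^*$); solving the critical-point system there forces $p_2^*=p_3^*=1/(2p_1^*)$ (resp.\ $p_3^*=1/(2p_1^*)$) with indefinite Hessian, so these yield saddles, not minima. In the $\alpha_1$-region the two partials coincide on the diagonal $p_2^*=p_3^*=s$, both reducing to $6s^2-4p_1^*s-1$, whose positive root is exactly $q=(\sqrt{4(p_1^*)^2+6}+2p_1^*)/6=(\sqrt{4(p_1^*)^2+6}-2p_1^*)^{-1}$; within the region $q\ge p_1^*$ the Hessian (determinant $16p_1^*(3q-p_1^*)$, positive trace) is positive definite, so $(q,q)$ is a genuine local minimum, present precisely when $q\ge p_1^*$, i.e.\ when $p_1^*\le1/\sqrt2$. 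Substituting the defining relation $6q^2=4p_1^*q+1$ into $\alpha_1(q,q)=2(q-p_1^*)(2q^2-1)$ (the bloc form \eqref{3blocalpha}) and eliminating $q$ via $D:=\sqrt{4(p_1^*)^2+6}=6q-2p_1^*$ produces the closed form $\alpha_a$ of \eqref{3breq}: the symmetric best response.

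Next I would examine the boundary, where by the $(p_2^*,p_3^*)$-symmetry it suffices to treat the edges $p_2^*=0$ and $p_3^*=1$ together with the corners. On $p_2^*=0$ the edge function is $C^1$ in $p_3^*$: for $p_3^*\le p_1^*$ it is the decreasing affine $2p_1^*-p_3^*-2(p_1^*)^3$, and for $p_3^*\ge p_1^*$ it is $2p_1^*-p_3^*+(p_3^*)^3-3(p_1^*)^2p_3^*$, whose interior critical point is $p_3^*=\sqrt{(3(p_1^*)^2+1)/3}$. Since the derivative is still $-1<0$ at $p_3^*=p_1^*$, the edge minimum sits at this interior point (for $p_1^*\le\sqrt{2/3}$, so that it lies in $[0,1]$), and substituting gives $\alpha_b=2p_1^*-2(p_3^*)^3$, i.e.\ the second line of \eqref{3breq}. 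I would then check that the edge $p_3^*=1$ and the corners return strictly larger values, so they never compete, and conclude $R(p_1^*)=\min\{\alpha_a,\alpha_b\}$.

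The main obstacle, beyond the bookkeeping of back-substituting critical-point relations into the cubic to reach the stated radicals, is the range $p_1^*>1/\sqrt2$, where $(q,q)$ exits the $\alpha_1$-region and $\alpha_a$ is no longer realized as an actual critical value. There I must show directly that $\alpha_b\le\alpha_a$, so that $\min\{\alpha_a,\alpha_b\}$ still selects the correct quantity; numerically the crossover already occurs below $1/\sqrt2$ (e.g.\ $\alpha_b(1/\sqrt2)\approx-0.107<0=\alpha_a(1/\sqrt2)$), so the delicate analytic step is to prove that $\alpha_a-\alpha_b$ changes sign exactly once on $[0,1]$.
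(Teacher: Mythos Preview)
Your approach—region-by-region critical point analysis plus boundary inspection, organized via the $(p_2^*,p_3^*)$-symmetry—is essentially the paper's own proof. The paper likewise treats the ordering regions in symmetric pairs, finds the interior minimum in the $\alpha_1$-region yielding $\alpha_a$ and the edge minimum on $p_2^*=0$ yielding $\alpha_b$, and (as in your sketch) does not explicitly resolve the range issue you flag for $p_1^*>1/\sqrt{2}$.
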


  \begin{proof}
	  We consider the various cases in turn, minimizing on different regions of definition.
\\\\
  (1) $p_1^*<p_2^*<p_3^*$ and $p_1^*<p_3^*<p_2^*$\\
  Take $\frac{\partial \alpha }{\partial p_2^*} =6p_2^*p_3^*-4p_1^*p_3^*-1$ and $\frac{\partial \alpha}{\partial p_3^*}=3(p_2^*)^2+3(p_3^*)^2-4p_1^*p_2^*-1$, and second derivatives $\frac{\partial^2 \alpha}{\partial (p_2^*)^2}=\frac{\partial^2 \alpha}{\partial (p_3^*)^2}=6p_3^*$ and $\frac{\partial^2 \alpha}{\partial p_2^*p_3^*}=6p_2^*-4p_1^*$. We get $\lvert H \rvert=36(p_3^*)^2-(6p_2^*-4p_1^*)^2$ strictly positive with 
  $\frac{\partial^2 \alpha}{\partial (p_2^*)^2}\geq 0$, so we have a local minimum.
  	Solving $\frac{\partial \alpha}{\partial p_2^*}=0$ and $\frac{\partial \alpha}{\partial p_3^*}=0$ yields
  $p_2^*=p_3^*=(\sqrt{4(p_1^*)^2+6}-2p_1^*)^{-1}$, which gives $\alpha=-\frac{1}{27}((4(p_1^*)^2+6)^{\frac{3}{2}}+8(p_1^*)^3-36p_1^*)$.\\\\
  (2) $p_2^*<p_1^*<p_3^*$\\
  Take $\frac{\partial}{\partial p_2^*}\alpha_3=2p_1^*p_3^*-1$. $\frac{\partial}{\partial p_3^*}\alpha_3=2p_1^*p_2^*-3(p_1^*)^2+3(p_3^*)^2-1$. 
  The second derivatives yield the Hessian determinant is $-4(p_1^*)^2<0$, leading to the conclusion that the global minimum is a border solution. There are four border cases:\\
  (a) $p_2^*=0$, then $\alpha_3=2p_1^*-p_3^*+(p_3^*)^3-3(p_1^*)^2p_3^*$. Varying $p_3^*$ we get min$\alpha_3=2p_1^*-\sqrt{\frac{3(p_1^*)^2+1}{3}}+\frac{3(p_1^*)^2+1}{3}\sqrt{\frac{3(p_1^*)^2+1}{3}}-3(p_1^*)^2\sqrt{\frac{3(p_1^*)^2+1}{3}}$\\
  (b) $p_3^*=0$, then since $p_2^*<p_1^*<p_3^*$, $p_2^*=p_1^*=0$, and thus min$\alpha_3=0$.\\
  (c) $p_2^*=1$, then since $p_2^*<p_1^*<p_3^*$, we must have $p_1^*=p_3^*=1$, thus min$\alpha_3=0$.\\
  (d) $p_3^*=1$, then $\alpha_3=2p_1^*-p_2^*-3(p_1^*)^2+2p_1^*p_2^*=(2p_1^*-1)p_2^*-3(p_1^*)^2+2p_1^*$. If $p_1^*<\frac{1}{2},$ then $2p_1^*-1<0,$ min$\alpha_3 = p_1^*-(p_1^*)^2$. If $p_1^*\geq\frac{1}{2},$ then $2p_1^*-1\geq0,$ min$\alpha_3 = 2p_1^*-3(p_1^*)^2$. \\\\
  (3) $p_3^*<p_1^*<p_1^*$\\
  Take $\frac{\partial}{\partial p_2^*}\alpha_4=2p_1^*p_3^*-3(p_1^*)^2+3(p_2^*)^2-1=0$. $\frac{\partial}{\partial p_3^*}\alpha_4=  2p_1^*p_2^*-1=0$. 
  The second derivatives yield the Hessian determinant is also $-4(p_1^*)^2<0$, leading to the conclusion that the global minimum is a border solution. There are four border cases:\\
  (a) $p_2^*=0$, then  since $p_3^*<p_1^*<p_2^*$, $p_3^*=p_1^*=0$, and thus min$\alpha_4=0$.\\
  (b) $p_2^*=1$, then $\alpha_4=2p_1^*-p_3^*-3(p_1^*)^2+2p_1^*p_3^*=(2p_1^*-1)p_3^*-3(p_1^*)^2+2p_1^*$. If $p_1^*<\frac{1}{2},$ then $2p_1^*-1<0,$ min$\alpha_4= p_1^*-(p_1^*)^2$. If $p_1^*\geq\frac{1}{2},$ then $2p_1^*-1\geq0,$ min$\alpha_4=2p_1^*-3(p_1^*)^2$. \\
  (c) $p_3^*=0$, then $\alpha_4=2p_1^*-p_2*+(p_2^*)^3-3(p_1^*)^2p_2^*$. Varying $p_2^*$ we get min$\alpha_4=2p_1^*-\sqrt{\frac{3(p_1^*)^2+1}{3}}+\frac{3(p_1^*)^2+1}{3}\sqrt{\frac{3(p_1^*)^2+1}{3}}-3(p_1^*)^2\sqrt{\frac{3(p_1^*)^2+1}{3}}$\\
  (d) $p_3^*=1$, then since $p_3^*<p_1^*<p_2^*$, we must have $p_1^*=p_2^*=1$, thus min$\alpha_4=0$.\\\\
  (4) $p_2^*<p_3^*<p_1^*$ or $p_3^*<p_2^*<p_1^*$\\
  Take $\frac{\partial}{\partial p_2^*}\alpha(p_1^*, p_2^*, p_3^*)=2p_1^*p_3^*-1$. 
  By symmetry, $\frac{\partial}{\partial p_3^*}\alpha(p_1^*, p_2^*, p_3^*)=2p_1^*p_2^*-1$. 
  This yields second derivatives $\frac{\partial^2}{\partial(p_2^*)^2}\alpha(p_1^*, p_2^*, p_3^*)=0$, 
  $\frac{\partial^2}{\partial(p_3^*)^2}\alpha(p_1^*, p_2^*, p_3^*)=0$, and 
  $\frac{\partial^2}{\partial p_2^*p_3^*}\alpha(p_1^*, p_2^*, p_3^*)=2p_1^*$. 
  The Hessian determinant is then calculated as $\frac{\partial^2\alpha}{\partial(p_2^*)^2}\frac{\partial^2\alpha}{\partial(p_2^*)^2}
  -(\frac{\partial^2\alpha}{\partial p_2^*p_3^*})^2= -4(p_1^*)^2<0$. As this is nonpositive, it is not an interior minimum, and so
	  the global minimum is a border solution.
  There are four border cases: fix $p_2^*=0$ and vary $p_3^*\in[0,p_1^*]$, fix $p_2^*=p_1^*$ and vary $p_3^*\in[0,p_1^*]$,
  fix $p_3^*=0$ and vary $p_2^*\in[0,p_1^*]$, and fix $p_3^*=p_1^*$ and vary $p_2^*\in[0,p_1^*]$.
  By symmetry, the first and third cases are identical, as well as the second and fourth, leaving only two cases to consider: fix at 0,
  and fix at $p_1^*$.
  \\\\
  Fixing $p_3^*=0$ yields $\alpha(p_1^*, p_2^*, 0)=2p_1^*-p_2^*-2(p_1^*)^3$. From this, it is clear that the minimum will
  be achieved with $p_2^*=p_1^*$, giving $\alpha(p_1^*, p_1^*, 0)=p_1^*-2(p_1^*)^3$. The other case, with $p_3^*=p_1^*$,
  gives $\alpha(p_1^*, p_2^*, p_1^*)=p_1^*-p_2^*-2(p_1^*)^3+2(p_1^*)^2p_2^*$. 
	  Taking $\frac{\partial}{\partial p_2^*}\alpha(p_1^*, p_2^*, p_1^*)$ gives $2(p_1^*)^2-1$, which is only 0 if 
  $p_1^*=\frac{1}{\sqrt{2}}$, meaning it is impossible to conclude if this is a minimum or not. As the only other candidate is
  $\alpha(p_1^*, p_1^*, 0)=p_1^*-2(p_1^*)^3$, it must be the minimum of $\alpha$ for $p_2^*, p_3^*\leq p_1^*$.
  \end{proof}

From \eqref{R}-\eqref{3breq} we obtain a conclusion strikingly different than in the 2-player case.

  \begin{corollary}\label{3nopurecor}
	  For continuous 3-player Guts, \emph{there is no strategy, pure or mixed, 
	  guaranteeing a nonnegative return} for player 1.
  \end{corollary}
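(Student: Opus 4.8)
The plan is to read the result off the best-response formula \eqref{R}--\eqref{3breq} combined with the fixed-point characterization of the forcible value in Theorem \ref{tthm}. Write $\underline V$ for the value player 1 can force against the coalition of players 2--3, viewed as a single opponent. I would prove $\underline V<0$ by establishing two facts: that $\underline V\le 0$, and that $\underline V\ne 0$. The first is immediate from Proposition \ref{3blocprop}: the bloc strategy $p_2^*=p_3^*=1/\sqrt2$ guarantees player 1 a nonpositive return, so the coalition can hold player 1 to $\underline V\le 0$. For the second I would use that $\underline V$ is a fixed point of the value map \eqref{Tmap}; if $\underline V=0$ then $0=T(0)=Value(A)$, where $A$ is the immediate-payoff matrix. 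The decisive observation is that setting $V=0$ makes the stakes multiplier $\beta$ drop out entirely, so the remaining task, $Value(A)<0$, is a statement purely about the immediate return $\alpha$ and avoids all of the unbounded-stakes subtleties of Section \ref{s:unbounded}.

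To prove $Value(A)=\max_{\mu_1}\min_{\mu_{23}}\mathbb{E}[\alpha]<0$ I would first reduce from mixed to pure strategies for player 1, exactly as in Remark \ref{nomixrmk}. From the case formulas \eqref{3alpha} one checks that $\partial^2_{p_1^*}\alpha\le 0$ on each of the six regions, and, together with the $C^1$-matching across their boundaries noted in Remark \ref{C1rmk}, this shows $\alpha(\cdot,p_2^*,p_3^*)$ is concave in $p_1^*$ for every fixed $(p_2^*,p_3^*)$. Hence for any (pure or mixed) strategy $\mu_1$ with mean $\bar p_1$, Jensen's inequality gives $\mathbb{E}_{\mu_1}[\alpha(p_1^*,p_2^*,p_3^*)]\le \alpha(\bar p_1,p_2^*,p_3^*)$. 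Taking the coalition's optimal pure response to $\bar p_1$ yields $\min_{\mu_{23}}\mathbb{E}_{\mu_1}[\alpha]\le R(\bar p_1)$, and maximizing over $\mu_1$ gives $Value(A)\le \max_{q\in[0,1]}R(q)$. Since $R$ is continuous on the compact interval $[0,1]$, it remains only to show $R(q)<0$ pointwise.

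The crux is thus $R(p_1^*)<0$ for every $p_1^*\in[0,1]$, via \eqref{R}--\eqref{3breq}. A one-variable check (the inequality $(4(p_1^*)^2+6)^{3/2}+8(p_1^*)^3-36p_1^*\ge 0$, sharp only at $p_1^*=1/\sqrt2$) shows the bloc branch satisfies $\alpha_a(p_1^*)\le 0$ with equality only at $p_1^*=1/\sqrt2$. Since $R=\min\{\alpha_a,\alpha_b\}\le\alpha_a$, this already gives $R(p_1^*)<0$ for all $p_1^*\ne 1/\sqrt2$. At the single exceptional point the bloc branch vanishes, $\alpha_a(1/\sqrt2)=0$, but the off-bloc branch is strictly negative: $\alpha_b(1/\sqrt2)=-\tfrac{2}{27}\big((15/2)^{3/2}-27/\sqrt2\big)<0$, because $(15/2)^3=421.875>364.5=27^2/2$. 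Hence $R(1/\sqrt2)=\alpha_b(1/\sqrt2)<0$ as well, so $\max_q R(q)<0$, giving $Value(A)<0$ and therefore $\underline V<0$.

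The main obstacle is conceptual rather than computational. Because the pot can grow ($\beta$ may exceed $1$ when all three players hold), one cannot directly infer a strictly negative total return from a strictly negative immediate best response: if $\underline V>0$, the coalition's incentive to suppress $\alpha$ conflicts with its incentive to suppress $\beta$, and a single-round estimate of the form used in Theorem \ref{tthm} breaks down. The device that resolves this is precisely the split into $\underline V\le 0$ and $\underline V\ne 0$, evaluating the fixed-point equation at $V=0$ where the $\beta$-terms disappear; this is what isolates the single genuinely new estimate of the whole argument, the strict negativity $\alpha_b(1/\sqrt2)<0$, which marks the exact point at which three-player Guts departs from the favorable two-player saddle of Corollary \ref{2opt}.
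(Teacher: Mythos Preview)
Your proof is correct and reaches the same decisive computation as the paper---that $R(p_1^*)<0$ for every $p_1^*$, with the key evaluation $\alpha_b(1/\sqrt2)<0$---but the framing differs. The paper argues via Remark \ref{3uniquermk}: since $p_1^*=1/\sqrt2$ is the \emph{unique} strategy (pure or mixed) guaranteeing $\alpha\ge0$ against bloc opponents, it is the only candidate against general coalitions, and $R(1/\sqrt2)<0$ eliminates it. You instead route through the fixed-point identity \eqref{rec}, splitting into $\underline V\le0$ (from Proposition \ref{3blocprop}) and $\underline V\ne0$ (from $Value(A)\le\max_q R(q)<0$). Both use the same concavity/Jensen reduction from mixed to pure player-1 strategies.

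Your framing has the virtue of making explicit the passage from ``immediate return $\alpha<0$'' to ``total recursive return $<0$,'' which the paper leaves implicit. On the other hand, your argument leans on the fixed-point identity $\underline V=T(\underline V)$, which the paper asserts ``evidently'' in \eqref{rec} but does not prove in the unbounded-stakes setting of Section \ref{s:unbounded}; the paper's direct route via Remark \ref{3uniquermk} sidesteps this. Either way the mathematical content is the same, and your numerical check $(15/2)^3>27^2/2$ is a clean way to certify $\alpha_b(1/\sqrt2)<0$.
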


  \begin{proof}
	  By Remark \ref{3uniquermk}, the unique strategy, pure or mixed, 
	  guaranteeing nonnegative return against the restricted
	  class of bloc solutions, is the pure strategy $p_1^*=1/\sqrt{2}$.
  But, since $\alpha_a<0$ for $p_1^*\neq\frac{1}{\sqrt{2}}$,
  $\alpha_b<0$ for $p_1^*\notin[\approx0.248, \approx0.639]$, and$\frac{1}{\sqrt{2}}\notin[0.248, 0.639]$,
  we have $R(p_1^*)<0$ for any $p_1^*$, and so no pure strategy can guarantee nonnegative return, in particular
	  not the candidate $p_1^*=1/\sqrt{2}$.
  \end{proof}

  \br\label{con-conrmk}
  Functions $\alpha_j(p_1^*, p_2^*,p_3^*)$, $j=1,\dots,6$ as in the proof of Proposition
  \ref{3prop} are concave in $p_1^*$.
  Likewise, $\alpha_1$ and $\alpha_2$ are convex in $(p_2^*,p_3^*)$. However, none of $\alpha_3$, $\alpha_4$,
  $\alpha_5$, $\alpha_4$ are convex in $(p_2^*,p_3^*)$; hence the minimax theorem does not apply, and optimal
  pure solutions are not guaranteed.
  \er

  \subsection{A winning strategy for players 2-3}\label{s:3win}
  The abstract result of Corollary \ref{3nopurecor} is not completely satisfying, relying on the
  subtle observation of Remark \ref{nomixrmk} rather than direct computation.
  More important, though it shows that players 2-3 can force a losing outcome for player 1,
  this does not imply that they can force a winning outcome for themselves.
  For, recall (Section \ref{s:value}) that unbounded generalized recursive games are not necessarily zero-sum.
  We complete our treatment of the 3-player case by exhibiting a rather explicit winning strategy for
  players 2-3, combining just 2 pure strategies.

  \begin{lemma}\label{L1}
	  For some fixed $C_0,C_1,C_2,\delta >0$, and $\epsilon>0$ sufficiently small,
  the strategy A. $p_2^*=p_3^*= 1/\sqrt{2} -\epsilon$ satisfies
  $\alpha(p_1,p_2^*,p_3^*)< C_1 \epsilon2$ for $p_1\in [1/\sqrt{2}- C_0\epsilon, 1/\sqrt{2} + C_0\epsilon]$,
  $\alpha(p_1,p_2^*,p_3^*)<0$ for $p_1\not \in (1/\sqrt{2}- C_0\epsilon, 1/\sqrt{2} + C_0\epsilon)$,
  and
  $\alpha(p_1,p_2^*,p_3^*)\leq - \epsilon/C_2$ for $p_1\in [1/\sqrt{2}- \delta, 1/\sqrt{2} + \delta]$.
  \end{lemma}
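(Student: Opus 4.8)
The plan is to substitute the bloc values $p_2^*=p_3^*=1/\sqrt2-\epsilon$ directly into the bloc formula \eqref{3blocalpha} and to study the resulting function of the single variable $p_1$, treating $\epsilon$ as a small parameter. This reduces the whole statement to elementary one-variable analysis of a function that is, according as $p_1\le p_2^*$ or $p_1\ge p_2^*$, a product of two explicit factors.

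First I would record the two branches. For $p_1\le p_2^*$ formula \eqref{3blocalpha} gives $\alpha=2(p_2^*-p_1)\big(2(p_2^*)^2-1\big)$, and since $2(p_2^*)^2-1=-2\sqrt2\,\epsilon+2\epsilon^2<0$ for small $\epsilon$, this branch is $\le 0$, is affine in $p_1$ with slope of size $O(\epsilon)$, and vanishes exactly at $p_1=p_2^*$ (consistent with the $C^1$ matching of Remark \ref{blocproprmk}). For $p_1\ge p_2^*$ it gives $\alpha=2(p_2^*-p_1)\big(p_1^2+p_1p_2^*-1\big)$, whose first factor is $\le 0$ and whose second factor has a single relevant root $r(\epsilon)=\tfrac12\big(-p_2^*+\sqrt{(p_2^*)^2+4}\big)$. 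The key computation here is the expansion $r(\epsilon)=1/\sqrt2+\epsilon/3+O(\epsilon^2)$, whose leading term $r(0)=1/\sqrt2$ recovers the equilibrium of Proposition \ref{3blocprop} and whose first-order coefficient is positive.

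The sign analysis then follows. The only region in which $\alpha>0$ is the interval $p_1\in(p_2^*,r(\epsilon))$, where both factors of the second branch are negative; this interval has width $O(\epsilon)$ and, once $C_0$ is chosen larger than $1$, lies inside $(1/\sqrt2-C_0\epsilon,1/\sqrt2+C_0\epsilon)$, which gives the second conclusion ($\alpha<0$ outside the window). Writing $p_1=1/\sqrt2+s$, both factors are $O(\epsilon)$ throughout the window (to leading order $-2(s+\epsilon)$ and $\tfrac1{\sqrt2}(3s-\epsilon)$), so their product is $O(\epsilon^2)$; maximizing the resulting quadratic $-\sqrt2\,(s+\epsilon)(3s-\epsilon)$ over $s$ yields the explicit bound $\max_{p_1}\alpha=\tfrac{4\sqrt2}{3}\,\epsilon^2+O(\epsilon^3)$, which is the first conclusion with, say, $C_1=2$. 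For the third conclusion I would use that, once $p_1$ leaves this window, $\alpha$ is on each branch a product of a factor of size $O(\epsilon)$ (on the left branch, the factor $2(p_2^*)^2-1$) with a factor growing linearly in the distance $|p_1-1/\sqrt2|$; hence the penalty grows at least linearly in that distance with coefficient of order $\epsilon$, so that at a fixed distance of order $\delta$ from the window one obtains $\alpha\le-\epsilon/C_2$ for an appropriate fixed $C_2$. (Here the interval $[1/\sqrt2-\delta,1/\sqrt2+\delta]$ is of course understood with the $O(\epsilon)$ window removed, since $\alpha$ vanishes at $p_1=p_2^*$.)

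The step I expect to be the main obstacle is not any single estimate but the uniform bookkeeping of the $\epsilon$-dependence: one must control $r(\epsilon)$ and both factors on each branch well enough that $C_0,C_1,C_2,\delta$ can be fixed once and for all for every $\epsilon$ below a threshold. The genuinely delicate feature—precisely what the subsequent two-strategy combination of players 2--3 will exploit—is that the positive excursion inside the window is only of order $\epsilon^2$, whereas the penalty at scale $\delta$ is of the strictly larger order $\epsilon$. One must also notice the asymmetry that on the left branch the penalty never exceeds order $\epsilon$ (it stays $O(\epsilon)$ even at $p_1=0$), while on the right branch it reaches order one; verifying that the weakest case, the left branch, still beats $\epsilon/C_2$ at scale $\delta$ is where the care is needed.
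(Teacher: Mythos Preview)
Your approach is essentially that of the paper: substitute the bloc values into \eqref{3blocalpha} and analyze the two branches as functions of $p_1$. Your explicit computation of the root $r(\epsilon)=1/\sqrt2+\epsilon/3+O(\epsilon^2)$ of the second factor is a clean alternative to the paper's device of bounding that factor by $1-p_1^2-(p_2^*)^2$; both give the window of width $O(\epsilon)$ and the $O(\epsilon^2)$ maximum inside it, and your value $\tfrac{4\sqrt2}{3}\epsilon^2$ for the leading term is correct.

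The one genuine slip is your reading of the third conclusion. You rightly observe that the bound $\alpha\le-\epsilon/C_2$ cannot hold throughout $[1/\sqrt2-\delta,1/\sqrt2+\delta]$, since $\alpha$ vanishes at $p_1=p_2^*$. But your proposed fix, ``the interval with the $O(\epsilon)$ window removed,'' does not work either: just outside the window, say at $p_1=1/\sqrt2-C_0\epsilon$, the left-branch formula gives $\alpha=2\big((C_0-1)\epsilon\big)\big(2(p_2^*)^2-1\big)=O(\epsilon^2)$, not $-\epsilon/C_2$. The intended statement (visible both from the paper's own proof, which establishes ``$\alpha<-\epsilon/C_2$ for $p_1\le 1/\sqrt2-\delta$,'' and from how the lemma is invoked in Corollary~\ref{ABcor}) is that the bound holds for $p_1\notin[1/\sqrt2-\delta,1/\sqrt2+\delta]$. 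With that reading your own analysis already yields the result: on the left branch $\alpha=2(p_2^*-p_1)\big(2(p_2^*)^2-1\big)$ with the second factor $\sim-2\sqrt2\,\epsilon$ and the first $\ge\delta-\epsilon$, giving $\alpha\le-c\delta\epsilon$; on the right branch the product is even more negative. So the argument is fine once the quantifier on $\delta$ is corrected.
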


  \begin{proof}
	  By the bloc alpha formula, for $p_1\leq p_2^*$, we have
  $\alpha(p_1,p_2^*,p_2^*)= 2(p_1-p_2^*)(1-2(p_2^*)^2)$, i.e., a linear function of $p_1$ vanishing at $p_2^*$,
  with slope $2(1-2(p_2^*)^2>\epsilon/c$ for some fixed $c>0$, any $\epsilon>0$.
  Thus, $\alpha<-\eps/C_2$ for 
  for $p_1\leq 1/\sqrt{2}- \delta$ for $C_2$ sufficiently large, and $\alpha<0$ for
  $p_1\in [p_2^*-\delta, p_2^*-C_0\epsilon]$, while $\alpha\leq C_1\epsilon^2$ for 
  $p_1\in [p_2^*-C_0\epsilon, p_2^*]$ for $C_1$ sufficiently large.
  Similarly, for $p_1\geq p_2^*$, we have
  $$
	  \begin{aligned}
		  \alpha(p_1,p_2^*,p_2^*)&= 2(p_1-p_2^*)(1-(p_1^*)^2 - p_1^*p_2^*)\\
		  &\leq
  2(p_1-p_2^*)(1-(p_1^*)^2 - p_1^*p_2^*)\leq
  2(p_1-p_2^*)(1-(p_1^*)^2 - (p_2^*)^2).
	  \end{aligned}
  $$

  Observing that $(p_2^*)^2\geq 1/2 - \epsilon/c$ for 
  $p_2^*=1/\sqrt{2}-\epsilon$ and $\epsilon$ sufficiently small, this gives
  $$
  \alpha(p_1,p_2^*,p_2^*)\leq
  (p_1-p_2^*)(1-(p_1^*)^2 + 2\epsilon/c),
  $$
  from which the estimates readily follow.
  \end{proof}

  \begin{lemma}\label{L2}
For some $\delta, \sigma >0$, and $C_3>0$,
  the strategy B. $(p_2^*, p_3^*)= (0, 1/\sqrt{2} + \delta )$ 
  satisfies $\alpha(p_1,p_2^*,p_3^*)<-1/C_3$ 
  for $p_1\in [1/\sqrt{2}- \sigma,  1/\sqrt{2} + \sigma]$.
  \end{lemma}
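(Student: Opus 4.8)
The plan is to reduce the claim to the single relevant branch of the piecewise payoff \eqref{3alpha} and then run a first-order perturbation analysis about the point $p_1=p_3^*=1/\sqrt2$.

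First I would pin down the ordering. With $p_2^*=0$ and $p_3^*=1/\sqrt2+\delta$, any $p_1\in[1/\sqrt2-\sigma,1/\sqrt2+\sigma]$ with $0<\sigma<\delta$ satisfies $p_2^*=0<p_1<1/\sqrt2+\delta=p_3^*$. Hence we sit squarely in the third case $p_2^*<p_1^*<p_3^*$ of Proposition \ref{3prop}, and setting $p_2^*=0$ in $\alpha_3$ collapses the payoff to $h(p_1):=2p_1-p_3^*+(p_3^*)^3-3p_1^2p_3^*$, the very function already treated in the border case $p_2^*=0$ within case (2) of the proof of Proposition \ref{3br}. So the task is purely to bound $h$ from above on the interval.

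Next I would carry out the perturbation. A direct substitution gives $h=0$ at $p_1=p_3^*=1/\sqrt2$, while $\partial_{p_3^*}h=-1+3(p_3^*)^2-3p_1^2$ equals $-1$ at that same point. Expanding in $\delta$, for all sufficiently small $\delta>0$ one gets $h(1/\sqrt2)=-\delta+O(\delta^2)\le-\delta/2$ when $p_3^*=1/\sqrt2+\delta$; intuitively, pushing $p_3^*$ slightly above $1/\sqrt2$ moves into the region where $h$ is strictly decreasing in $p_3^*$ (its minimum over $p_3^*$ sits at $\sqrt{(3p_1^2+1)/3}$), so the bloc-breaking strategy B already extracts a strict loss at the center $p_1=1/\sqrt2$, exactly where strategy A of Lemma \ref{L1} fails to.

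Finally I would propagate this to the whole interval. With $\delta$ now fixed so that $h(1/\sqrt2)\le-\delta/2$, the function $h$ is smooth in $p_1$, so shrinking $\sigma$ (while keeping $\sigma<\delta$) yields $h(p_1)\le-\delta/4$ for all $p_1\in[1/\sqrt2-\sigma,1/\sqrt2+\sigma]$; taking $C_3=4/\delta$ gives the claim. The only real care-points—rather than genuine obstacles—are ensuring the ordering keeps us on branch $\alpha_3$ uniformly over the interval (forcing $\sigma<\delta$ and using $p_1>0$) and making the two smallness conditions on $\delta$ and $\sigma$ mutually compatible; the core estimate itself is a one-variable Taylor bound.
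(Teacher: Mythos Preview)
Your proof is correct and follows essentially the same approach as the paper: both restrict to the branch $\alpha_3$ via the ordering $p_2^*<p_1<p_3^*$ (enforced by $\sigma<\delta$), specialize to $p_2^*=0$, observe that the resulting function vanishes at $p_1=p_3^*=1/\sqrt2$ with $\partial_{p_3^*}$-derivative $-1$ there, and then conclude by continuity in $p_1$. Your version is slightly more explicit in tracking the constants ($-\delta/2$, $-\delta/4$, $C_3=4/\delta$), but the argument is the same.
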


 \begin{proof} 
Assuming $\sigma<\delta$, for $p_1\in [1/\sqrt{2}- \sigma,  1/\sqrt{2} + \sigma]$ we are in case (2)
	 of the proof of Proposition \ref{3prop}, so that $\alpha $ is given by
$$
\alpha_3(p_1^*,p_2^*, p_3^*)= 2p_1^*-p_2^*-p_3^*+(p_3^*)^3-3(p_1^*)^2p_3^*+2p_1^*p_2^*p_3^*
$$
or $2p_1^*-p_3^*+(p_3^*)^3-3(p_1^*)^2p_3^*.  $
For $p_1^*=p_3^*=1/\sqrt{2}$, this is $0$. Taking the derivative with respect to $\delta$ of
$\alpha(1/\sqrt{2}, 0, 1/\sqrt{2}+\delta)$, or in other words, the derivative of $\alpha$ with respect to $p_3$,
we obtain 
$\alpha_3'(0 <p_1^*<p_3^*)= -1+3(p_3^*)^2-3(p_1^*)^2$, which at $\delta=0$ is
$-1 $.  Thus, for $\delta>0$ sufficiently small, we have $\alpha(1/\sqrt{2},0,1/\sqrt{2}+\delta)<0$.
By continuity, we then have for $\sigma < \delta$ sufficiently small, that
$\alpha(p_1,0,1/\sqrt{2}+\delta)<0$ for $p_1\in [1/\sqrt{2}-\sigma, 1/\sqrt{2}+\sigma]$ as claimed. 
  \end{proof}

\begin{corollary}\label{ABcor}
	For some fixed $C_0,C_1,C_2,C_3, \delta, \sigma >0$,
	$C>0$ sufficiently large, and $\epsilon=\epsilon(C)>0$ sufficiently small,
  the mixed strategy $ (1-C\epsilon^2)A+ C\epsilon^2 B$ returns $\alpha<0$, $\beta\leq \beta_0<1$ 
	for all $p_1^*\in [0,1]$, and therefore \emph{is a winning strategy for players 2-3}.
\end{corollary}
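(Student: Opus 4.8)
The plan is to assemble the two pure strategies A and B of Lemmas \ref{L1} and \ref{L2} into a single mixed coalition strategy and to verify, for the resulting payoff pair, the hypotheses that force a strictly negative value for player 1. Write $\Sigma := (1-C\epsilon^2)A + C\epsilon^2 B$. By linearity of expectation in the mixing probabilities, the payoffs of $\Sigma$ against a pure strategy $p_1^*$ of player 1 are $\alpha_\Sigma = (1-C\epsilon^2)\alpha_A + C\epsilon^2\alpha_B$ and $\beta_\Sigma = (1-C\epsilon^2)\beta_A + C\epsilon^2\beta_B$, where $\alpha_A,\beta_A$ refer to strategy A and $\alpha_B,\beta_B$ to strategy B. The goal is to show $\alpha_\Sigma(p_1^*)<0$ and $0\le \beta_\Sigma(p_1^*)\le \beta_0<1$ for every $p_1^*\in[0,1]$, and then to read off the conclusion from the diminishing-returns theory.

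First I would establish $\alpha_\Sigma<0$ by splitting $[0,1]$ into three regions around $1/\sqrt2$. On the inner window $N_\epsilon=(1/\sqrt2 - C_0\epsilon,\,1/\sqrt2+C_0\epsilon)$, Lemma \ref{L1} gives $\alpha_A<C_1\epsilon^2$, while---after arranging $C_0\epsilon<\sigma$ so that $N_\epsilon$ sits inside the window of Lemma \ref{L2}---we have $\alpha_B<-1/C_3$; hence $\alpha_\Sigma<(1-C\epsilon^2)C_1\epsilon^2 - C\epsilon^2/C_3<0$ as soon as $C>C_1C_3$. On the intermediate annulus $M_\sigma\setminus N_\epsilon$, with $M_\sigma=[1/\sqrt2-\sigma,1/\sqrt2+\sigma]$, Lemma \ref{L1} gives $\alpha_A<0$ and Lemma \ref{L2} gives $\alpha_B<0$, so both summands are negative. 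On the outer region $p_1^*\notin M_\sigma$, Lemma \ref{L1} gives the stronger bound $\alpha_A\le -\epsilon/C_2$ of order $\epsilon$, while $\alpha_B$ is bounded by a fixed constant $K$; thus $\alpha_\Sigma\le -(1-C\epsilon^2)\epsilon/C_2 + CK\epsilon^2<0$ for $\epsilon$ small, the $O(\epsilon)$ term dominating the $O(\epsilon^2)$ one.

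Next I would bound $\beta_\Sigma$. From the bloc formula \eqref{3blocbeta}, $\beta_A(p_1^*)=2-2p_2^*+p_1^*(2(p_2^*)^2-1)$ with $p_2^*=1/\sqrt2-\epsilon$; since $2(p_2^*)^2-1<0$ this is decreasing in $p_1^*$, so $\beta_A\le\beta_A(0)=2-\sqrt2+2\epsilon<1$ uniformly. Using the crude bound $\beta_B\le 2$ from the stakes multiplier, $\beta_\Sigma\le (1-C\epsilon^2)(2-\sqrt2+2\epsilon)+2C\epsilon^2$, which is $<1$ for $\epsilon$ small because the leading term is strictly below $1$ and $C$ is already fixed. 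Note that $\beta_B$ itself exceeds $1$ for small $p_1^*$, which is exactly why B can be given only the small weight $C\epsilon^2$; this is the reason the combination, rather than B alone, is used.

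Finally I would convert these uniform bounds into the value statement. Because the two inequalities hold for every pure $p_1^*$, they persist after averaging against any mixed strategy of player 1, so it suffices to treat pure player-1 responses. With $0\le\beta_\Sigma\le\beta_0<1$, the game faced by player 1 against the fixed strategy $\Sigma$ is of diminishing-returns type, so by Proposition \ref{dimprop} it has a unique value $V^*=\max_{p_1^*}\bigl(\alpha_\Sigma(p_1^*)+\beta_\Sigma(p_1^*)V^*\bigr)$. If $V^*\ge 0$, then at a maximizer $\hat p$ one has $V^*=\alpha_\Sigma(\hat p)/(1-\beta_\Sigma(\hat p))<0$, since $\alpha_\Sigma(\hat p)<0$ and $1-\beta_\Sigma(\hat p)\ge 1-\beta_0>0$, a contradiction; hence $V^*<0$. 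Since $\beta<1$ uniformly means no funds remain lost in the limit (Remark \ref{nolossrmk}) and $\alpha$ is zero-sum, the coalition's return is $-V^*>0$, so $\Sigma$ is a winning strategy for players 2--3. The main obstacle is the intermediate region of the $\alpha$-estimate: there $\alpha_A$ is only $O(\epsilon^2)$-negative, so the $O(\epsilon^2)$-weighted contribution of B could spoil negativity unless B is also negative there; the fix is to coordinate the constants so this region lies inside $M_\sigma$, which is possible because $\delta$ in Lemma \ref{L1} may be taken as small as desired at the cost of enlarging $C_2$, ensuring $\delta\le\sigma$.
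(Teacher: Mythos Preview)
Your argument is correct and follows essentially the same three-region decomposition as the paper's proof. The only cosmetic differences are that you partition $[0,1]$ using the radius $\sigma$ from Lemma~\ref{L2} rather than the $\delta$ from Lemma~\ref{L1} (both choices require the coordination $\delta\le\sigma$, which you note), you give a more explicit computation of the $\beta$-bound via \eqref{3blocbeta} where the paper appeals to continuity from $\beta(p_1^*,1/\sqrt2,1/\sqrt2)=2-\sqrt2$, and you close with Proposition~\ref{dimprop} and a direct fixed-point contradiction rather than invoking Theorem~\ref{tthm} from the coalition's side. One small misattribution: in your last paragraph the coordination $\delta\le\sigma$ is actually needed to make the \emph{outer} region estimate work (so that $p_1^*\notin M_\sigma$ implies $p_1^*\notin[1/\sqrt2-\delta,1/\sqrt2+\delta]$ and hence $\alpha_A\le-\epsilon/C_2$), not the intermediate one, which already sits inside $M_\sigma$ by construction; but the conclusion and the fix are right.
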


  \begin{proof}
	  By Lemmas \ref{L1}-\ref{L2}, $C\epsilon^2 B$ gives $\alpha \leq C_4 C\epsilon^2$ for 
  $p_1\not\in [1/\sqrt{2}-\delta, 1/\sqrt{2}+\delta]$, while $(1-C\epsilon^2) A$  gives $\leq -\epsilon/C_2$ for
  $\epsilon>0$ sufficiently small. The sum is thus 
  $\leq C_4 C\epsilon^2- \epsilon/C_2 <0$ for
  $\epsilon(C) $ sufficiently small.
  Both A and B return $\alpha<0$ for $p_1$ in $[1/\sqrt{2}-\delta,1/\sqrt{2}+\delta]$ but not in 
  $[1/\sqrt{2}-C_0\epsilon,1/\sqrt{2}+C_0\epsilon]$.
  Finally, on $[1/\sqrt{2}-C_0\epsilon,1/\sqrt{2}+C_0\epsilon]$, $(1-C\epsilon^2)A$ returns $\leq 2C_1\epsilon^2$
  for $\epsilon$ sufficiently small, while $C\epsilon^2 B$ returns $<-C\epsilon^2 /C_3$, hence the sum gives
  $\alpha \leq 2C_1\epsilon^2 -C\epsilon^2 /C_3< 0$ for $C$ sufficiently large.

	 Finally, recall from the proof of Proposition \ref{3blocprop}
that $\beta(p_1^*,1/\sqrt{2},1/\sqrt{2})=2-\sqrt{2}<1$ independent of $p_1^*$, whence by continuity
$\beta(p_1^*,1/\sqrt{2}-\eps,1/\sqrt{2}-\eps)=2-\sqrt{2}\leq \beta_1<1$ independent of $p_1^*$.
	  Noting that $\beta$ is at least bounded for strategy B, and that B is weighted by $O(\eps^2)\to 0$
	  in the proposed mixed strategy, we thus have that $\beta\leq \beta_0<1$ for $\eps$ sufficiently
	  small.
	  Thus, $\alpha<0$ and $\beta\leq \beta_0<1$, hence,
	  by Theorem \ref{tthm}, the mixed A/B strategy gives a strictly positive return for players 2-3.
  \end{proof}

  \begin{example}
	  An explicit example of a winning player 2-3 strategy of the mixed type A/B described in Corollary \ref{ABcor}
	  is given by the choices $\eps=0.04$, $\delta=0.137$, and $C=(17/16)\times 10^2$.
	  As depicted in Figure \ref{negativefig}, the best case return for player 1 is 
	  $\lessapprox -0.04<0$, or $\approx -0.4\%$ of ante $1.0$.
  \end{example}

  \begin{figure}
        \centering
        \begin{subfigure}[b]{0.45\textwidth}
		 (a) \includegraphics[scale=0.25]{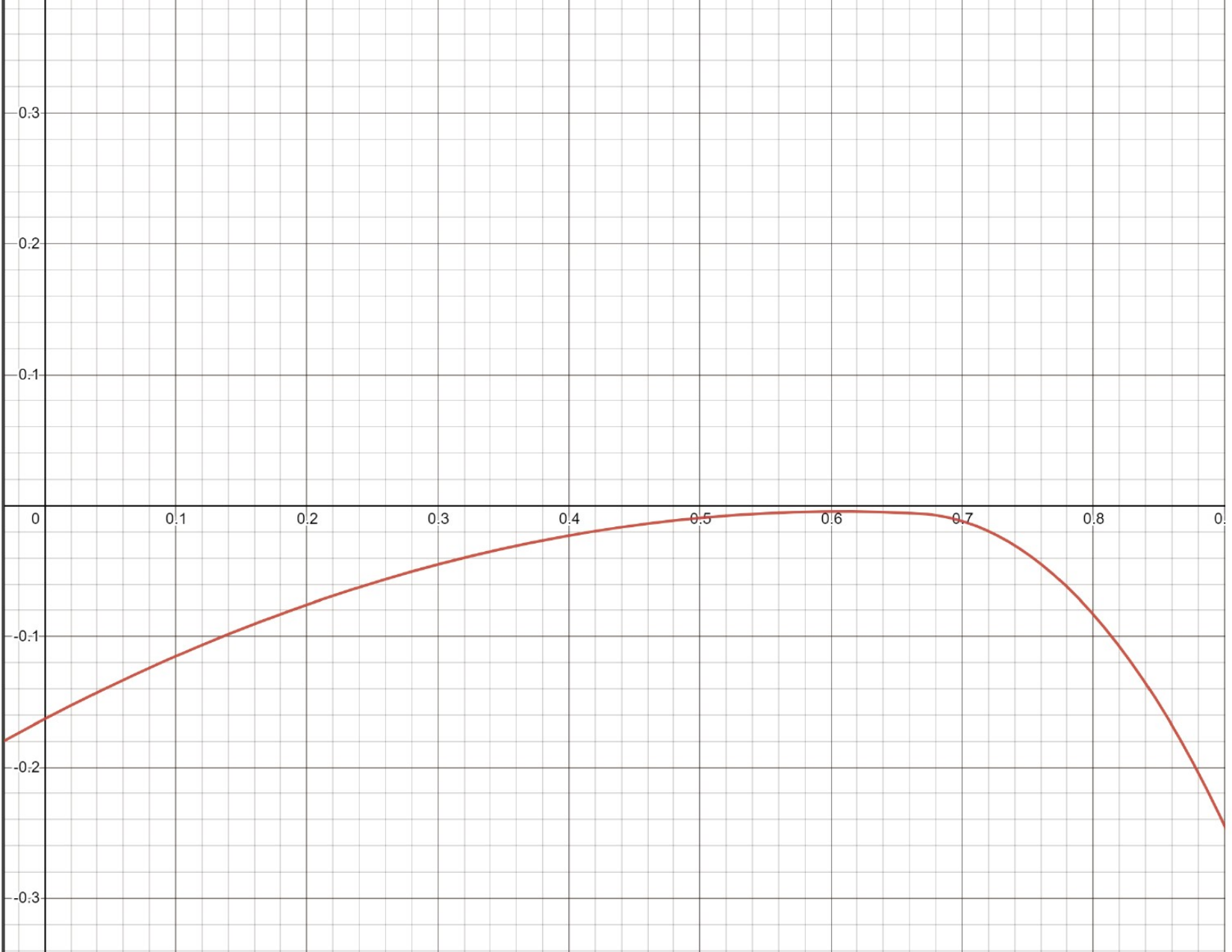}
		 \end{subfigure} 
		 \quad
		 \begin{subfigure}[b]{0.45\textwidth}
			 (b) \includegraphics[scale=0.25]{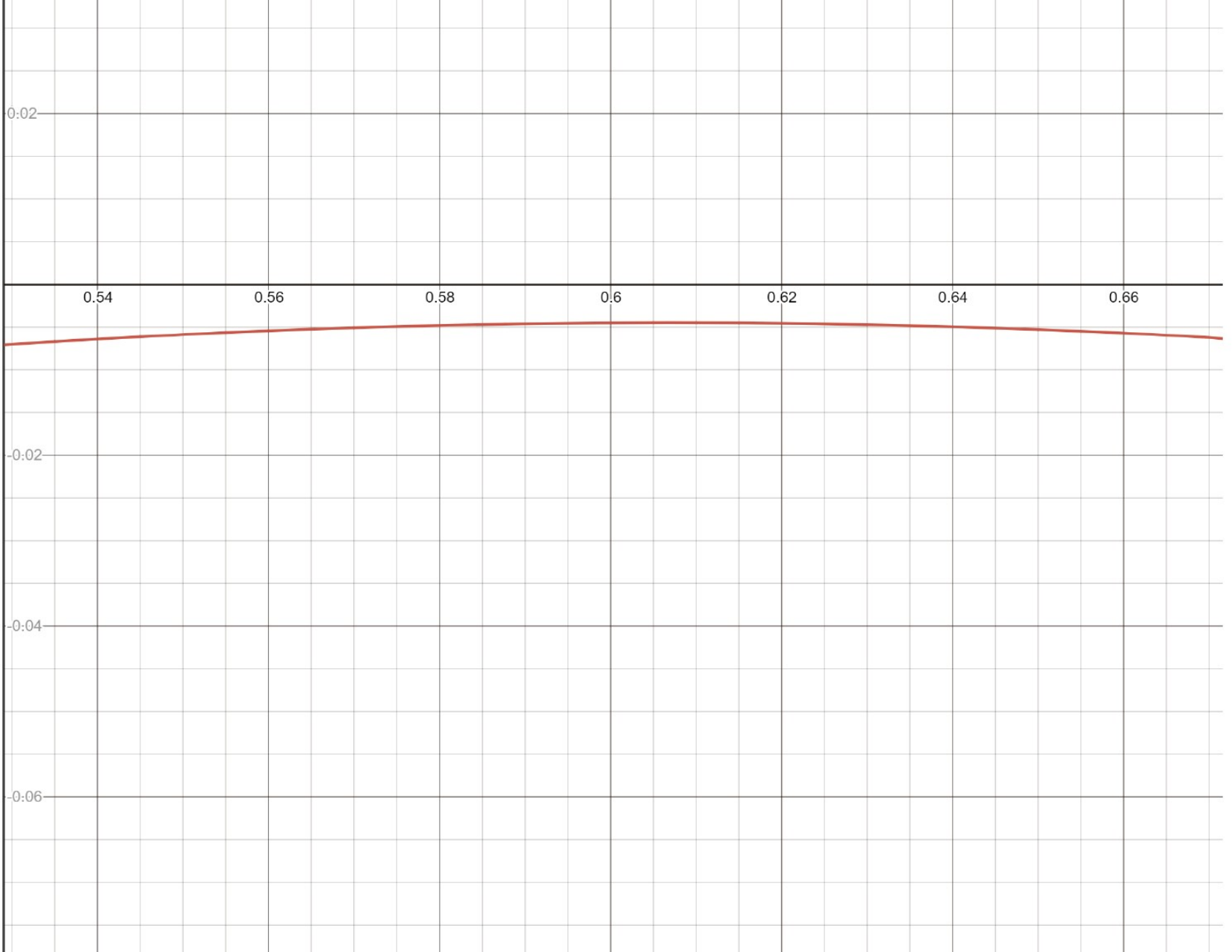}  
		 \end{subfigure}
		 	\caption{
				Graphs plotting expected return $\alpha$ vs. $p_1^*$ for mixed player 2-3 strategy A/B
				of Corollary \ref{ABcor}, with $\eps=0.04$, $\delta=0.137$, and $C=(17/16)\times 10^2$:
				a) full scale. b) blowup near $p_1^*=1/\sqrt{2}$. Maximum return for player 1
				is $\approx -0.004$, or approximately $0.4 \%$ of the initial outlay (ante) $1.0$.
				}
	\label{negativefig}
  \end{figure}

  \subsubsection{Strong Nash equilibrium}\label{s:3sNash}
  Corollary \ref{ABcor} includes the information that the symmetric Nash equilibrium
  $ (p_1^*,p_2^*,p_3^*)=(1/\sqrt{2}, 1/\sqrt{2}, 1/\sqrt{2}) $
  {\bf is not a Strong Nash equilibrium},
  i.e., for $p_1^*=1/\sqrt{2}$ there exist deviations 
  from equilibrium strategy $(1/\sqrt{2},1/\sqrt{2})$ 
  both forcing a negative return for player 1 and improving the joint return for players 2-3.

\section{Analysis of the continuous $n$-player game}\label{s:n}
Finally we examine the $n$-player game, $n\geq 4$, again restricting to the continuous case.
Here, we do not attempt to determine the full payoff and best response functions, but only, guided by
our analyses of the previous case, to show using judiciously chosen special cases that the broad outlines
of behavior are the same as in the case $n=3$: namely, there exists an optimal pure strategy for player 1 against
bloc strategies for players 2-$n$, guaranteeing nonnegative return, but there exists no strategy, pure or mixed, 
guaranteeing nonnegative return agains general coalition strategies of players 2-$n$.  More, there
exists a winning strategy for players 2-$n$, returning to them a strictly positive expected value.
Thus, \emph{there is a symmetric Nash equilibrium but it is not strong}.

\subsection{Bloc case: necessity}\label{s:nbloc}
We first note that, by consideration of the special, bloc strategy case,
we may show by elimination that the only possible optimal
``go/no-go'' strategy for the $n$-player game consists of $p_1^*=(1/2)^{1/(n-1)}$. 
The key is the following miraculously simple formula.

\begin{lemma}\label{derivlem}
	The derivative of $\alpha(p_*+h, p_*, \dots, p_*)$ with respect to $h$ at $h=0$ is given by
	\be\label{hderiv}
	(d/dh)\alpha(p_*+h, p_*, \dots, p_*)|_{h=0}= (n-1)(1-2p_*^{n-1}).
	\ee
\end{lemma}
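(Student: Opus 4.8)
The plan is to adapt the conditional-differencing argument of Section~\ref{s:2alt}, together with the $C^1$ observation of Remark~\ref{C1rmk}, to the fully symmetric configuration. Since $\alpha(p_*,\dots,p_*)=0$ by \eqref{n0symm}, I would write $\alpha(p_*+h,p_*,\dots,p_*)$ as the difference $\alpha(p_*+h,p_*,\dots,p_*)-\alpha(p_*,\dots,p_*)$ and exploit that the two profiles $(p_*+h,p_*,\dots,p_*)$ and $(p_*,\dots,p_*)$ produce identical play, deal by deal, except on the event $p_1\in[p_*,p_*+h)$, where the reference profile has player~1 hold while the perturbed profile has player~1 drop. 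Conditioning on this event (probability $h$) and letting $h\to0$ so that $p_1\to p_*$, the right derivative at $h=0$ equals the expectation, over the independent uniform cards of players $2,\dots,n$, of
\[
\big(\text{player 1's return if they drop}\big)-\big(\text{player 1's return if they hold}\big),\qquad p_1=p_*.
\]
The left derivative is computed the same way, and the two agree by the $C^1$ matching of Remark~\ref{C1rmk}.

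I would then parametrize by $k$, the number of the opponents $2,\dots,n$ who hold. Each holds independently with probability $1-p_*$, so $k$ is $\mathrm{Binomial}(n-1,1-p_*)$ and $k=0$ has probability $p_*^{n-1}$. The decisive structural fact is that player~1's card equals the threshold $p_*$, hence is the \emph{lowest} among all holders: if player~1 holds they win precisely when $k=0$ (they are the sole holder) and otherwise lose. Using the unified immediate-return accounting of Section~\ref{s:detail} — with $r\ge1$ holders the winner gets $n+r-2$, each losing holder $-n+r-2$, each dropper $r-2$, while for $r=0$ every player gets $0$ — I would tabulate both scenarios: dropping gives $r=k$ holders and a return of $0$ if $k=0$ and $k-2$ if $k\ge1$; holding gives $r=k+1$ holders and a return of $n-1$ if $k=0$ (sole winner) and $-n+k-1$ if $k\ge1$ (losing holder).

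Subtracting produces the cancellation responsible for the clean formula: $(\text{drop})-(\text{hold})$ equals $-(n-1)$ when $k=0$ and equals $n-1$ for \emph{every} $k\ge1$, independent of $k$ — intuitively, inserting oneself as a losing holder always shifts one's return by $-(n-1)$ (an extra match of $-n$, offset by the unit rise in the shared virtual ante), whereas winning as sole holder shifts it by $+(n-1)$. Averaging over $k$ then gives
\be
(n-1)\big(1-p_*^{n-1}\big)-(n-1)\,p_*^{n-1}=(n-1)\big(1-2p_*^{n-1}\big),
\ee
which is \eqref{hderiv}. I expect the main difficulty to be bookkeeping hygiene rather than anything conceptual: one must check that the return formula of Section~\ref{s:detail} is applied uniformly across the terminal ($r=1$) and replay ($r\ge2$) subcases and handles $r=0$ correctly, and must confirm that it is exactly the ``lowest holder'' observation that collapses the $k$-dependence in the $k\ge1$ case. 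The only other point requiring a remark is that the left and right difference quotients coincide, which is where the $C^1$ property of Remark~\ref{C1rmk} is invoked.
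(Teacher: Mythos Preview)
Your proposal is correct and follows essentially the same approach as the paper: condition on the event $p_1\in[p_*,p_*+h]$, observe that with $p_1=p_*$ player~1 is the lowest holder, and compute the (drop$-$hold) difference as $-(n-1)$ when no opponent holds and $+(n-1)$ otherwise, then average against $\Pr(k=0)=p_*^{n-1}$. The paper handles the $O(h)$ correction from $p_j\in[p_*,p_*+h]$ explicitly and treats $h<0$ by a symmetric computation rather than invoking the $C^1$ property, but these are cosmetic differences.
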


\begin{proof}
The strategy vectors $(p_*+h, p_*, \dots, p_*)$ and $(p_*, p_*, \dots, p_*)$ prescribe identical actions for all
	players, except when $p_1\in [p_*,p_*+h]$, as occurs with probability $h$.  Conditioning on this event,
	we find that is is sufficient to show that the difference in conditional expected payoffs is
	$(n-1)(1-2p_*^{n-1})+ O(h)$. Without loss of generality take $h>0$; a symmetric computation suffices
	for $h<0$.
	Noting that $p_j\in [p_*,p_*+h]$ for some $j=2,\dots, n$ occurs with probability $O(h)$, we may ignore this
	case as being accounted for in the $O(h)$ error term. In the limit, it is thus equivalent to taking
	$p_1=p_*$ and each $p_j$, $2\leq j\leq n$ randomly in $[0,p_*]$ or $[p_*,1]$, and computing the difference in
	expected payoffs for player 1 dropping ($h>0$) and holding ($h=0$).

	In the case that all of $p_2,\dots,p_n$ lie in $[0,p_*]$, the difference is $0- (n-1)=-(n-1)$.
	In all other cases, $n-1-m$ of $p_2,\dots,p_n$ lie in $[0,p_*]$ and $m$ in $[p_*,1]$, $m\geq 1$.
	If player 1 drops, he loses his initial ante $1$, but gains back gratis the ante $m-1$ to the
	replayed game with stakes multiplied by $m-1$.  If player 1 holds, he loses his ante $1$ plus
	the amount of the pot, $n$, but gains back gratis the ante $m$ of the replayed game.
	Thus, the difference in payoff is $(m-2)- (-n-1+m)= +(n-1)$.

	Summing over all cases, we find that the difference in payoffs is $(n-1)$ times $1$ (the total of all probabilities) minus the difference between the payoff $-(n-1)$ of the first event and the payoffs $+(n-1)$ of all other events,
	times the probability $p_*^{n-1}$ of the first event.  This gives $(n-1)$ times
	$(1- 2p_*^{n-1})$ as claimed, verifying \eqref{hderiv}.
\end{proof}

\begin{corollary}\label{elimcor}
	For $n$-player guts in the bloc strategy case, 
	the optimal pure strategy for player 1 if it exists is given by $p_1^*=(1/2)^{1/(n-1)}$.
\end{corollary}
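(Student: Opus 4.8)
The plan is to combine the symmetry identity \eqref{n0symm} with the derivative formula of Lemma \ref{derivlem} to extract a first-order condition that only $p_*=(1/2)^{1/(n-1)}$ can satisfy. First I would package the bloc payoff into a single function $F(x,y):=\alpha(x,y,\dots,y)$, the expected immediate return to player 1 when player 1 plays cutoff $x$ and each of players $2$--$n$ plays the common cutoff $y$. The identity \eqref{n0symm} reads $F(y,y)=0$ for every $y\in[0,1]$, since the diagonal is the fully symmetric configuration.

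The crux is to turn Lemma \ref{derivlem}, which controls the derivative of $F$ in player 1's direction, into control of the derivative in the bloc direction. Since $\alpha$ is $C^1$ across the diagonal (Remark \ref{C1rmk}; compare the explicit bloc formula of Remark \ref{blocproprmk}), $F$ is differentiable at $(y,y)$, and differentiating $F(y,y)\equiv 0$ by the chain rule gives $\partial_x F(y,y)+\partial_y F(y,y)=0$. But $\partial_x F(y,y)$ is exactly the quantity evaluated in Lemma \ref{derivlem}, namely $(n-1)(1-2y^{n-1})$; hence
\[
\partial_y F(y,y)=-(n-1)\bigl(1-2y^{n-1}\bigr).
\]

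Next I would invoke optimality. Because Guts is symmetric, the best outcome player 1 can force is a nonnegative return, so an optimal pure strategy $p_1^*=p_*$ must in particular yield $\alpha(p_*,y,\dots,y)=F(p_*,y)\ge 0$ against every bloc response $y$ --- this being the immediate-return condition required for a nonnegative total return in a fair game (cf.\ Corollary \ref{fairthm}). Since $F(p_*,p_*)=0$, the point $y=p_*$ is then a global minimizer of $y\mapsto F(p_*,y)$ on $[0,1]$. Assuming $p_*\in(0,1)$, the interior first-order condition forces $\partial_y F(p_*,p_*)=0$, i.e.\ $(n-1)(1-2p_*^{n-1})=0$, whence $p_*^{\,n-1}=1/2$ and $p_*=(1/2)^{1/(n-1)}$. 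Finally I would eliminate the endpoints using the displayed formula: at $p_*=0$ it gives $\partial_y F(0,0)=-(n-1)<0$, so $F(0,\cdot)$ strictly decreases at $0$ and $y=0$ cannot minimize it; at $p_*=1$ it gives $\partial_y F(1,1)=(n-1)>0$, so $y=1$ cannot minimize $F(1,\cdot)$. Thus neither boundary strategy keeps $F\ge 0$, and $(1/2)^{1/(n-1)}$ is the sole surviving candidate.

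The one step needing genuine care is the passage $\partial_y F(y,y)=-\partial_x F(y,y)$, since it differentiates an identity precisely across the diagonal, where the piecewise-polynomial $\alpha$ switches formula; the $C^1$-matching of the pieces of \eqref{3alpha}-type must really be in force for this to be legitimate. I expect this to be the main (if mild) obstacle. It can be sidestepped entirely by an alternative symmetry argument: $\alpha(p_*+s,\dots,p_*+s)\equiv 0$ gives $\sum_{i=1}^{n}\partial_i\alpha=0$ at the diagonal, and since the opponent slots $2,\dots,n$ are interchangeable, $\partial_y F=\sum_{i=2}^{n}\partial_i\alpha=-\partial_1\alpha$, recovering the same formula directly from Lemma \ref{derivlem}.
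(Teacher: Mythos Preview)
Your proof is correct and follows essentially the same route as the paper: convert the derivative formula of Lemma \ref{derivlem} via symmetry into a derivative in the opponents' direction at the diagonal, then impose the first-order condition coming from $\alpha(p_*,\dots,p_*)=0$ being a minimum of the opponent response. The paper does this by invoking \eqref{nsymm_aux} to obtain $(d/dh)\alpha(p_*,p_*+h,p_*,\dots,p_*)|_{h=0}=-(1-2p_*^{n-1})$ for a single opponent slot, while you differentiate the identity $F(y,y)\equiv 0$ to get the full bloc derivative $\partial_y F(p_*,p_*)=-(n-1)(1-2p_*^{n-1})$; these differ only by the harmless factor $(n-1)$ and yield the same condition $p_*^{n-1}=1/2$. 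Your treatment is somewhat more careful than the paper's in two respects: you explicitly address the $C^1$ matching needed to differentiate across the diagonal, and you dispose of the boundary cases $p_*\in\{0,1\}$, neither of which the paper spells out.
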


\begin{proof}
	From \eqref{hderiv} we find (by symmetry) that
	$(d/dh)\alpha(p, p_*+h, p_*, \dots, p_*)|_{h=0}= -(1-2p_*^{n-1})$.
	Since $\alpha(p_*, \dots, p_*)=0$ (symmetry), $\alpha(p_*, \cdot,\dots, \cdot)\geq 0$
	for all $p_2^*,\dots, p_n^*$ only if 
	$(d/dh)\alpha(p, p_*+h, p_*, \dots, p_*)|_{h=0}=0$, or $ 1= 2p_*^{n-1}$.
	Thus, $p_1^*=p_*$ is optimal only if $p_1^*= (1/2)^{1/(n-1)}$.
\end{proof}

\br\label{nconconrmk}
Similar reasoning shows that the bloc payoff function $\alpha(p_1^*,p_2^*,\dots, p_2^*)$ is concave-convex,
whence we may obtain existence of a pure strategy returning $\alpha \geq 0$ by the minimax theorem, Theorem \ref{mmthm},
which must therefore (by Corollary \ref{elimcor}) be $p_1^*=(1/2)^{1/(n-1)}$.
We will establish this result instead by direct computation in the following section.
\er

\subsection{Bloc case: sufficiency}\label{s:nblocsuff}
By much the same argument we obtain the following global result.

\begin{proposition}\label{blocprop}
	For $p_1^*\leq p_2^*$,
	\be\label{formeq}
	\alpha(p_1^*, p_2^*, \dots, p_2^*)= (n-1)(p_2^*-p_1^*)\Big(2(p_2^*)^{n-1}-1\Big)
	\ee
	is \emph{linear in $p_1^*$}.  For $p_1^*> p_2^*$ on the other hand, 
	\be\label{formineq}
	(n-1)(p_2^*-p_1^*)(2(p_1^*)^{n-1}-1)+ \delta =
	\alpha(p_1^*, p_2^*, \dots, p_2^*)< (n-1)(p_2^*-p_1^*)\Big(2(p_2^*)^{n-1}-1\Big),
	\ee
	where corrector $\delta$ is $0$ for $n=2$ and for $n\geq 3$ is $>0$, satisfying
	\be\label{corrector}
	\delta \geq (n-1)(n-2)\Big(\frac{ (p_1^*-p_2^*)^{2}(p_2^*)^{n-2} + 2(p_1^*-p_2^*)^{n} } {(p_1^*)^{n-1} }\Big).
	\ee
\end{proposition}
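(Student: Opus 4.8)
The plan is to obtain the exact payoff $\alpha(p_1^*,p_2^*,\dots,p_2^*)$ by the conditional--differencing method of Section~\ref{s:2alt}, in the spirit of the derivative computation in Lemma~\ref{derivlem}, and then read off both the linear formula \eqref{formeq} and the corrector $\delta$ directly. Since $\alpha(p_2^*,\dots,p_2^*)=0$ by \eqref{n0symm}, I write
\[
\alpha(p_1^*,p_2^*,\dots,p_2^*)=\alpha(p_1^*,p_2^*,\dots,p_2^*)-\alpha(p_2^*,p_2^*,\dots,p_2^*)
\]
and condition on the hand $p_1=s$ of player~1. The two profiles $(p_1^*,p_2^*,\dots,p_2^*)$ and $(p_2^*,\dots,p_2^*)$ prescribe identical actions for every player except when $s$ lies strictly between $p_1^*$ and $p_2^*$; on that interval they differ only in that player~1 holds under one profile and drops under the other. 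Hence $\alpha$ equals the integral over $s$ in this interval of the conditional expected difference $\Delta(s)$ between player~1's hold-payoff and drop-payoff, the $n-1$ opponents' hands being independent, uniform, and all played with the common cutoff $p_2^*$.

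The heart of the argument is the evaluation of $\Delta(s)$, carried out with the zero-sum virtual-ante bookkeeping of Section~\ref{s:detail} exactly as in Lemma~\ref{derivlem}. When player~1 drops, only the number of holding opponents is relevant, so the drop-expectation is a constant $D_0=(n-1)(1-p_2^*)-2+2(p_2^*)^{n-1}$, independent of $s$. When player~1 holds with hand $s$, I classify each opponent as dropping (probability $p_2^*$), holding and losing to player~1 (probability $s-p_2^*$), or holding and beating player~1 (probability $1-s$); player~1 wins precisely when no opponent beats him (the sole-holder subcase subsumed), an event of probability $s^{n-1}$, and summing returns over these cases gives a hold-expectation $H(s)=(n-1)(1-p_2^*)-(n+1)+2n\,s^{n-1}$. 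Therefore
\[
\Delta(s)=H(s)-D_0=2n\,s^{n-1}-2(p_2^*)^{n-1}-(n-1).
\]
For $s<p_2^*$ player~1 cannot beat any holder, so the winning probability is the constant $(p_2^*)^{n-1}$ rather than $s^{n-1}$, and $\Delta$ collapses to the constant $\Delta(p_2^*)=(n-1)\bigl(2(p_2^*)^{n-1}-1\bigr)$.

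Assembling these gives the two regimes. For $p_1^*\le p_2^*$ the interval is $[p_1^*,p_2^*]$ with $s<p_2^*$ throughout, so $\alpha=\int_{p_1^*}^{p_2^*}\Delta(p_2^*)\,ds=(p_2^*-p_1^*)(n-1)\bigl(2(p_2^*)^{n-1}-1\bigr)$, which is \eqref{formeq} and is manifestly linear in $p_1^*$. For $p_1^*>p_2^*$ the interval is $[p_2^*,p_1^*]$ and player~1 holds under the reference profile, so
\[
\alpha=-\int_{p_2^*}^{p_1^*}\Delta(s)\,ds=\bigl[(n-1)+2(p_2^*)^{n-1}\bigr](p_1^*-p_2^*)-2\bigl((p_1^*)^{n}-(p_2^*)^{n}\bigr).
\]
Subtracting the leading term $(n-1)(p_2^*-p_1^*)\bigl(2(p_1^*)^{n-1}-1\bigr)$ isolates $\delta$, and since the resulting polynomial in $p_1^*$ has $p_2^*$ as a double root one rewrites it as
\[
\delta=2\,p_1^*(n-1)(n-2)\int_{p_2^*}^{p_1^*} t^{\,n-3}\,(t-p_2^*)\,dt,
\]
which vanishes identically for $n=2$ and is strictly positive for $n\ge 3$ and $p_1^*>p_2^*$. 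Lower-bounding the integrand on $[p_2^*,p_1^*]$ then produces an explicit positive lower bound of the form \eqref{corrector}; for instance the crude estimate $t^{\,n-3}\ge(p_2^*)^{n-3}$ already gives $\delta\ge(n-1)(n-2)\,p_1^*(p_2^*)^{n-3}(p_1^*-p_2^*)^2>0$. Finally, the strict upper bound in \eqref{formineq} is equivalent to $2n(p_2^*)^{n-1}(p_1^*-p_2^*)<(p_1^*)^{n}-(p_2^*)^{n}$, which is exactly strict convexity of $t\mapsto t^{n}$ (its graph lies strictly above the tangent line at $p_2^*$).

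I expect the main obstacle to be the bookkeeping in $\Delta(s)$: one must treat consistently the degenerate opponent configurations --- all opponents dropping, and exactly one opponent holding --- in which the pot is awarded and the round terminates with no increase of stakes, so that the virtual-ante accounting of Section~\ref{s:detail} is applied correctly, and one must verify that the configuration in which player~1 is the only holder is subsumed under the generic winning formula. The remaining difficulty is purely algebraic: recognizing $\delta$ in the integral form above, from which its sign and the lower bound are immediate. As a consistency check tying the two regimes together, the two pieces agree to first order at $p_1^*=p_2^*$, reflecting the $C^1$ property noted in Remark~\ref{C1rmk}; indeed both one-sided derivatives of $\alpha$ in $p_1^*$ at $p_1^*=p_2^*$ equal $(n-1)\bigl(1-2(p_2^*)^{n-1}\bigr)$, the value predicted by \eqref{hderiv}.
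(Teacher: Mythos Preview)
Your argument is correct and follows the same conditional--differencing idea as the paper, but you push it further: where the paper bounds error terms qualitatively (observing that certain $O(h)$ contributions have a definite sign, then doing a case analysis on the number $m$ of opponents with $p_j\in[p_2^*,p_1^*]$), you integrate $\Delta(s)$ exactly to obtain a closed-form expression for $\alpha$ and then the integral representation $\delta=2p_1^*(n-1)(n-2)\int_{p_2^*}^{p_1^*}t^{n-3}(t-p_2^*)\,dt$. This is cleaner and makes positivity of $\delta$ for $n\ge 3$ transparent.

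Two small points. First, in your convexity reduction for the strict upper bound, the factor $2$ should cancel: the inequality $\alpha<(n-1)(p_2^*-p_1^*)(2(p_2^*)^{n-1}-1)$ is equivalent to $n(p_2^*)^{n-1}(p_1^*-p_2^*)<(p_1^*)^n-(p_2^*)^n$, not $2n(\cdots)$; with the correct coefficient this is exactly the tangent-line inequality for $t\mapsto t^n$. Second, the explicit lower bound you extract, $(n-1)(n-2)\,p_1^*(p_2^*)^{n-3}(p_1^*-p_2^*)^2$, is not literally \eqref{corrector} (which the paper obtains by keeping only the $m=1$ and $m=n-1$ terms of its case decomposition); but since you have the exact integral for $\delta$, either bound---and indeed the full series noted in Remark~\ref{3ineqrmk}---can be read off directly.
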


\begin{proof}
	For $h\leq 0$, the proof of Lemma \ref{hderiv} may be applied without change to give 
	$$
	\alpha(p_*+h, p_*, \dots, p_*)= h (n-1)(1-2p_*^{n-1}),
	$$
	since the assumptions made in throwing out $O(h)$ error terms are in this case exact.
	This proves \eqref{formeq} in the case $p_1^*=p_*+h\leq p_*=p_2^*$.
	In the case $h\leq 0$, on the other hand,
	calculating the difference in expected return for player 1 when dropping instead of
	holding, we find that the $O(h)$ error terms are of negative sign (corresponding to cases when player 1 would
	have won had he held).  This proves the righthand inequality in \eqref{formineq} in the case
	$p_1^*=p_*+h\geq p_*=p_2^*$.
	To establish the lefthand inequality, we have only to observe that, performing the same estimate as in 
	the proof of Lemma \ref{hderiv}, but considering cases that $p_2,\dots,p_n$ are in $[0, p_1^*]$
	vs. $[p_1^*,1]$ instead of $[0, p_*]$ vs $[p_*,1]$, we obtain instead
	$$
	\alpha(p_1^*, p_2^*, \dots, p_2^*)= (n-1)\Big(p_2^*-p_1^*)(2(p_1^*)^{n-1}-1\Big) + O(h),
	$$
	where now the $O(h)$ errors vanish except in case $p_1\in [0,p_1^*]$.
	Calculating the difference in expected return for player 1 when holding instead of dropping, 
	we find in this case that the $O(h)$ error terms are of positive sign 
	(corresponding to cases when player 1 would have lost had he dropped).

	This establishes \eqref{formineq} with $\delta>0$. To obtain the more precise estimate \eqref{corrector},
	we examine the cases when $m=0,\dots,n-1$ of the opposing players' $p_j$ lie in $[p_2^*,p_1^*]$
	and adjust our estimate for the difference in expected return to player 1 from dropping vs. holding.
	Denote the difference for each value $m$ as $\delta_m$, so that
	\be\label{deltasum}
	\delta =(p_1^*-p_2^*)\sum_m \delta m \rho_m,
	\ee
	is the total correction in our estimate for $\alpha$, where 
	\be\label{rho}
	\rho_m:= C(n-1,m)(p_2^*)^{n-1-m}(p_1^*-p_2^*)^m (p_1^*)^{n-1}
	\ee
	is the conditional probability that $m$ of the opposing $p_j$ lie in $[p_2^*,p_1^*]$.

	Our lower bound was to assume incorrectly that all of players $2,\dots,n$ drop, so that player 1 would
	win $0$ when dropping and $n-1$ when holding, for a net change of $-(n-1)$.
	This is exact for $m=0$, i.e., $\delta_0=0$, but must be corrected for $1\leq m\leq n-1$.
	For $m=1$, player 1 wins $-1$ when dropping; when holding, it is a fair game between player 1 and the other
	player with $p_j\in [p_2^*,p_1^*]$, and so they win $0$, for actual difference of $-1$.  Thus,
	$\delta_1= (-1) - (-(n-1))= n-2$, which is $0$ for $n=2$ and $>0$ for $n\geq 3$. 

	Consider now the general case $m\geq 2$, $n\geq 3$.
	If player 1 drops then there is a fair game between the $m$ players with 
	$p_j\in [p_2^*,p_1^*]$, hence a replay with stakes $m-1$ times higher;
	player 1 thus loses their original ante but gains a ``virtual ante'' of $m-1$ in the repeated, higher-stake
	game, for net win of $m-2$.
	If player 1 holds, there is a fair game between the $m+1$ players consisting of player 1 and
	the other players with $p_j\in [p_2^*,p_1^*]$, hence a replay with stakes $m$ times higher.
	Thus, player 1 wins $n$ with probability $1/(m+1)$ and $-n$ with probability $m/(m+1)$, but gains
	a virtual ante of $m-1$ in the repeated $m$ times higher-stake game, above their original ante of $1$.
	Thus, their expected net payoff when holding is $-(m-1)n/m + (m-1)$, for an actual net difference in 
	expectation between dropping and holding of $(m-2)- (m-1)+ n(m-1)/m=  n(m-1)/m -1.$

	Collecting information, we find that the $m$th corrector is 
	$$
	\delta_m =  n(m-1)/m -1 - (-(n-1))= (n-2)+ n(m-1)/m \geq 0.
	$$
	In the extreme case $m=n-1$, we have in particular $\delta_{n-1}=(n-2)(1+ n/(n-1))\geq 2(n-2)$.
	Plugging our estimates for $\delta_1$ and $\delta_{n-1}$ into \eqref{deltasum}-\eqref{rho},
	gives finally \eqref{corrector}, completing the proof.
\end{proof}

\br\label{3ineqrmk}
Formulae \eqref{formeq}--\eqref{formineq} are exact for $n=2$ (cf. \eqref{2alpha}).
From \eqref{3alpha}, we have for $n=3$ and $p_2^*\leq p_1^*$ the exact formula
	$
	\alpha(p_1^*,p_2^*,p_2^*)= 2(p_1^*-p_2^*)(1-2p_1^*\big(p_2^+ p_1^*)\big),
	$
	yielding explicit error bounds illustrating \eqref{formineq} of
	$
	\alpha(p_1^*,p_2^*,p_2^*)- 2(p_2^*-p_1^*)\Big(2(p_1^*)^2-1\Big)
	= 2p_1^*(p_2^*-p_1^*)^2
	$
	and
	$$
	\alpha(p_1^*,p_2^*,p_2^*)- 2(p_2^*-p_1^*)\Big(2(p_2^*)^2-1\Big)= -(2p_1^*+ 4p_2^*) (p_1^*-p_2^*)^2.
	$$
	From the proof of Proposition \eqref{blocprop}, we can extract if needed an exact error series representation for
	$\delta$ in powers of $(p_1^*-p_2^*)^r$, $r=1,\dots, n$. 
\er

\begin{corollary}\label{extcor}
	For $n$-player guts, with $p_1^*=(1/2)^{1/(n-1)}$, $\alpha(p_1^*, p_2^*, \dots, p_2^*)\geq 0$.
\end{corollary}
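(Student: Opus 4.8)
The plan is to read the result straight off the two formulae of Proposition \ref{blocprop}, exploiting the fact that the cutoff $p_1^*=(1/2)^{1/(n-1)}$ is designed precisely so that the factor $2(p_1^*)^{n-1}-1$ vanishes. Write $p_*:=(1/2)^{1/(n-1)}$, so that $(p_*)^{n-1}=1/2$ and hence $2(p_*)^{n-1}-1=0$; I then split into two cases according to the sign of $p_2^*-p_*$.

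First I would treat the case $p_2^*\geq p_1^*=p_*$, where \eqref{formeq} applies and gives $\alpha=(n-1)(p_2^*-p_*)(2(p_2^*)^{n-1}-1)$. The first factor $p_2^*-p_*$ is nonnegative by assumption, and since $t\mapsto t^{n-1}$ is increasing on $[0,1]$ we have $(p_2^*)^{n-1}\geq (p_*)^{n-1}=1/2$, so $2(p_2^*)^{n-1}-1\geq 0$ as well. The product is therefore nonnegative, giving $\alpha\geq 0$ on this range.

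Next I would treat the case $p_2^*<p_1^*=p_*$, where \eqref{formineq} applies and gives $\alpha=(n-1)(p_2^*-p_*)(2(p_*)^{n-1}-1)+\delta$. By the defining property of $p_*$ the bracket $2(p_*)^{n-1}-1$ equals zero, so the entire main term vanishes and we are left with $\alpha=\delta$, which is nonnegative by Proposition \ref{blocprop} (indeed $\delta=0$ for $n=2$ and $\delta>0$ for $n\geq 3$). Combining the two cases yields $\alpha(p_1^*,p_2^*,\dots,p_2^*)\geq 0$ for all $p_2^*\in[0,1]$, as claimed.

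There is essentially no obstacle here: once Proposition \ref{blocprop} is in hand, the statement reduces to a one-line sign check, the only substantive points being the elementary monotonicity $(p_2^*)^{n-1}\geq 1/2$ in the first case and the vanishing of the linear factor at $p_*=(1/2)^{1/(n-1)}$ in the second. The conceptual content — that $(1/2)^{1/(n-1)}$ is the correct cutoff — has already been extracted in Corollary \ref{elimcor} via the derivative formula of Lemma \ref{derivlem}; this corollary merely confirms that the necessary condition identified there is also \emph{sufficient} for a nonnegative immediate return against bloc play.
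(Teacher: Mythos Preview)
Your proof is correct and follows essentially the same route as the paper's: split on $p_2^*\gtrless p_1^*$, apply \eqref{formeq} in the first case (both factors nonnegative by monotonicity of $t\mapsto t^{n-1}$), and in the second case use the lefthand identity in \eqref{formineq} together with $2(p_1^*)^{n-1}-1=0$ and $\delta\geq 0$. The only cosmetic difference is that the paper phrases the second case as ``$\alpha>\text{(vanishing main term)}$'' rather than ``$\alpha=\delta\geq 0$'', but these are the same observation.
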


\begin{proof}
	Taking $p_1^*=(1/2)^{1/(n-1)}$ in Proposition \ref{blocprop}, we find for $p_2^*\geq p_1^*$, 
	by \eqref{formeq}, that
	$$
	\alpha(p_1^*, p_2^*, \dots, p_2^*)= (n-1)(p_2^*-p_1^*)\Big(2(p_2^*)^{n-1}-1\Big)\geq 0.
	$$
	For $p_2^*< p_1^*$, on the other hand, the lefthand inquality of \eqref{formineq} gives
	$$
	\alpha(p_1^*, p_2^*, \dots, p_2^*) > (n-1)(p_2^*-p_1^*)\Big(2(p_1^*)^{n-1}-1\Big)= 0.
	$$
\end{proof}

\begin{lemma}\label{betalem}
	For $n$-player Guts with $p_1^*=(1/2)^{1/(n-1)}$, 
	\be\label{lb}
	\beta(p_1^*, p_2^*, \dots, p_2^*)\leq  2\ln2 -1/2 < 0.9
	\ee
	for $p_2^*\geq p_1^*$, while for $p_2^*\leq p_1^*$,
	\be\label{b-a}
	\beta (p_1^*, p_2^*, \dots, p_2^*) - \alpha(p_1^*, p_2^*, \dots, p_2^*) 
	\equiv \beta(p_1^*,p_1^*, \dots, p_1^*) < 0.9.
	\ee
\end{lemma}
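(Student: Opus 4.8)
The plan is to first derive a closed form for the bloc multiplier $\beta(p_1^*,p_2^*,\dots,p_2^*)$ for general $n$, generalizing \eqref{3blocbeta}, and then treat the regimes $p_2^*\ge p_1^*$ and $p_2^*\le p_1^*$ separately, writing throughout $p:=p_1^*=(1/2)^{1/(n-1)}$ (so $p^{n-1}=1/2$) and $q:=p_2^*$. For the formula, let $M$ denote the number of holding players and $P_0=p\,q^{n-1}$ the probability all players drop. Since a round contributes stakes-multiplier $1$ if all drop, $m-1$ if $m\ge 2$ players hold, and $0$ (termination) if exactly one holds, summation over the number of holders gives $\beta=\mathbb{E}[M]-1+2P_0$, and with $\mathbb{E}[M]=(1-p)+(n-1)(1-q)$ this yields
\[
\beta(p_1^*,p_2^*,\dots,p_2^*)=(n-1)-p_1^*-(n-1)p_2^*+2p_1^*(p_2^*)^{n-1}.
\]

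For the case $q\ge p$ (establishing \eqref{lb}), I would analyze this as a function of $q$ on $[p,1]$. One computes $\partial_q\beta=(n-1)\big(2pq^{n-2}-1\big)$ and $\partial_q^2\beta=2p(n-1)(n-2)q^{n-3}\ge 0$, so $\beta$ is convex in $q$; and since $p^{n-1}=1/2$, the stationarity condition $2pq^{n-2}=1$ holds exactly at $q=p$. Hence $\beta$ is nondecreasing on $[p,1]$ and is maximized at the endpoint $q=1$, where $\beta=p=2^{-1/(n-1)}<1$; estimating this endpoint value gives the bound. The one fact actually needed downstream for Theorem \ref{tthm}, namely $\beta<1$ together with $\alpha\ge 0$ on this range (Corollary \ref{extcor}), is immediate.

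The case $q\le p$ (establishing \eqref{b-a}) is the heart of the matter, and I would prove the stronger assertion that $\beta-\alpha$ is \emph{independent of $q$} on $q\le p$, by an event-by-event comparison of the profiles $(p,q,\dots,q)$ and $(p,p,\dots,p)$ in the spirit of Section \ref{s:2alt} and Lemma \ref{derivlem}. The two profiles prescribe identical actions except for bloc players whose hand lies in $[q,p]$ (held under cutoff $q$, dropped under cutoff $p$). The key observation, using the bookkeeping of Section \ref{s:detail}, is that for any realization with $r\ge 1$ holders the single-round contribution to $\beta-\alpha$ equals the multiplier $r-1$ minus player $1$'s immediate return, and this is $1-n$ when player $1$ holds and wins, $n+1$ when player $1$ holds and loses, and $1$ when player $1$ drops --- in every case \emph{independent of $r$} --- while the all-drop event likewise contributes $1$. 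Since $q\le p$, every bloc player who switches from dropping to holding has hand value in $[q,p]$, hence at most player $1$'s hand value $p_1$ whenever player $1$ holds (as then $p_1\ge p_1^*=p$); thus player $1$'s win/lose/drop status is unaffected by the switch and only $r$ changes. The contribution is therefore unchanged for each realization of the hands, and taking expectations gives $(\beta-\alpha)(p,q,\dots,q)=(\beta-\alpha)(p,p,\dots,p)=\beta(p,\dots,p)$, using $\alpha(p,\dots,p)=0$ by \eqref{n0symm}. Finally $\beta(p,\dots,p)=(n-1)(1-p)$, and the elementary inequality $1-2^{-x}\le x\ln 2$ with $x=1/(n-1)$ yields $\beta(p,\dots,p)\le \ln 2<0.9$.

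The main obstacle is precisely the $r$-independence exploited in the last step: isolating the quantity whose single-round contribution does not see the number of holders, and verifying that the hypothesis $q\le p$ is exactly what freezes player $1$'s win/lose status under the cutoff switch (for $q\ge p$ it need not, which is why that case must be handled by the direct calculus of the second paragraph). Once this cancellation is found the argument is short; Steps~1--2 are then routine differentiation and estimation.
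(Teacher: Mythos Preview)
Your proof is correct, and in fact tidier than the paper's. The argument for $p_2^*\le p_1^*$ is essentially the same event-by-event comparison the paper carries out, but your formulation---isolating that the single-realization contribution to $\beta-\alpha$ depends only on player~1's hold/drop and win/lose status, not on the number $r$ of holders---makes the cancellation transparent in one line rather than several cases. Your evaluation $\beta(p,\dots,p)=(n-1)(1-p)\le\ln 2$ is precisely what the paper records afterward in Remark~\ref{improvermk} as an improvement over its in-proof series estimate.

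Where the two approaches genuinely differ is the case $p_2^*\ge p_1^*$. The paper bounds the binomial pieces $\mu_m$ by comparison with the exponential series and asserts that ``all estimates carried out for this case remain valid also for $p_2^*\ge p_1^*$,'' arriving at the constant $2\ln 2-\tfrac12\approx 0.886$. You instead derive the closed form $\beta=(n-1)-p-(n-1)q+2pq^{n-1}$, observe convexity in $q$ with critical point at $q=p$, and read off $\max_{q\ge p}\beta=\beta|_{q=1}=p=2^{-1/(n-1)}<1$. Your route is not only shorter but more accurate: at $q=1$ one has $\beta=p_1^*$, and for $n\ge 7$ this exceeds $2\ln 2-\tfrac12$ (e.g.\ $2^{-1/6}\approx 0.891$), so the specific constant in \eqref{lb} is in fact too strong as stated, and the paper's ``estimates remain valid'' step does not go through (the bound $\mu_0<1/2$ fails at $q=1$). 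You rightly note that only $\beta<1$ is needed for Theorem~\ref{tthm}, and your bound delivers exactly that---uniformly in $q$ for each fixed $n$, though not uniformly in $n$.
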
`

\begin{proof}
	We begin by establishing $\beta(p_1^*, p_1^*, \dots, p_1^*)<1$.
	We subdivide into subcases that $0\leq m\leq n$ players hold, occurring with probabilities 
	$\mu_m=C(n,m)(p_1)^{n-m}(1-p_1^*)^{m}$.
	In cases $m\geq 1$, stakes increase by factor $(m-1)$, while in the special case $m=0$ that all
	players drop, the game is repeated with multiplication factor $1$.
	Thus, 
	\be\label{betasum}
	\beta= \mu_0 + \sum_{j=2}^{n} (m-1) \mu_m.
	\ee

	Evidently, 
	$$
	C(n,0)\mu_0= (p_1^*)^n = p_1^*/2 < 0.5.
	$$
From $(1-p_1^*)= 1- e^{-(1/(n-1)\ln (2)}$ and $ \frac{1}{1+x}\geq e^{-x}\geq 1-x$ for $x\geq 0$, we obtain 
	\be\label{mu0}
	\frac{\ln 2}{n} \leq \frac{\ln 2}{(n-1)+\ln 2}\leq (1-p_1^*)\leq \frac{\ln 2}{n-1}.
	\ee
	Meanwhile, 
	$$
	\begin{aligned}
		C(n,m)\mu_m&= \frac{n!}{m!(n-m)!}(1-p_1^*)^m (p_1^*)^{n-m}\\
		&\leq \frac{n!}{m!(n-m)!} \frac{(\ln 2)^m}{n^m} 
		\leq \frac{(\ln 2)^m}{m!}, 
	\end{aligned}
	$$
	hence 
	$ \sum_{m=2}^n (m-1)C(n,m)\mu_m \leq \sum_{j=2}^n (m-1) \frac{ (\ln 2)^m}{m!}$.
	Observing that 
	$$
	\sum_{j=2}^n (m-1) \frac{ x^m}{m!} = x^2 (d/dx)\frac{e^x-1}{x}
	= xe^x-e^x+1 ,
	$$
	we have setting $x=\ln 2$ the estimate
	\be\label{mu2}
	 \sum_{m=2}^n (m-1)C(n,m)\mu_m \leq
	 2 \ln 2-2+1 = 2\ln2 -1= .3809\dots < 0.4.
	\ee

	Combining \eqref{betasum}, \eqref{mu0}, and \eqref{mu2}, we thus have
	\be\label{bineq}
	\beta= \mu_0 + \sum_{j=2}^{n} (m-1) \mu_m < 0.9
	\ee
	for $p_2^*=p_1^*$.
	Indeed, one may check that all estimates carried out for this case remain valid also for $p_2^*\geq p_1^*$,
	hence, by $\alpha\geq 0$, \eqref{bineq} holds also for $p_2^*\geq p_1^*$, giving
	$\beta \leq  1-\sigma$ for $\sigma>0$, verifying \eqref{lb}.

	We next observe for $p_2^*\leq p_1^*$ that 
	$$
	(\alpha- \beta)(p_1^*, p_2^*, \dots, p_2^*) \equiv (\alpha- \beta)(p_1^*, p_1^*, \dots, p_1^*),
	$$
	giving \eqref{b-a}.
	For, any changes between left- and righthand sides are due to different treatments by players
	2-$n$ of the case that $p_j\in [p_2^*,p_1^*]$. Since player 1 drops for $p_1\in [0,p_1^*]$
	and holds for $p_1\in [p_1^*,1]$, player one's chances of winning the pot are not affected by whether
	or not such players hold, since in the first case player 1 has no chance regardless, and in the
	second case player 1 automatically has a higher hand than they do.  Thus, the changes in $\alpha$
	and $\beta$ are equal and opposite, both coming from changes in the size of the stakes: the first by
	a corresponding increase in ``virtual ante,'' and the second directly by change in the multiplication factor
	for the stakes.
	This is clear when there are one or more players with $p_j\in [p_1^*,1]$, or no players 
	with $p_j\in [p_1^*,1]$ and none of players 2-9 with $p_j\in [p_2^*,p_1^*]$.
	In the special case that there are no $p_j\in [p_1^*,1]$ and $r$ of players 2-9 
	with $p_j\in [p_2^*,p_1^*]$, player 1 drops, and $r$ of players 2-9 change from drop to hold;
	thus, the contribution to $\alpha$ changes from $0$ to $r-2$ and the contribution to $\beta$
	(through size of the stakes) changes from $1$ to $r-1$.  Thus, the contribution to $\alpha-\beta$
	changes from $(0-1)=-1$ to $(r-2)-(r-1)=-1$, again with net change zero. Summing against probabilities,
	this gives a net change in $\alpha-\beta$ of zero, as claimed.
\end{proof}

\br\label{improvermk}
A nicer estimate is to notice that $\sum_{m=0}^n m C(n,m)(1-p)^m p^{n-m}= n(1-p)$, the expected number of successes 
in $n$ Bernoulli trials, while $\sum_{m=0}^n  C(n,m)(1-p)^m p^{n-m}= 1$.
This gives an exact formula of
$\beta(p,\dots,p)= n(1-p)-1+2p^n,$
or, in the case $p^{n-1}=1/2$, 
$$
\beta(p_*,\dots,p_*)= (n-1)(1-p_*)\leq \ln 2\leq 0.7,
$$
Moreover, $\beta(p,\dots,p)$ is monotone increasing in $p$ for 
$-n+ 2np^{n-1}= n(2p^{n-1}-1)>0$, or $p\geq (1/2)^{n-1}$,
so that we could obtain the result from $\beta(1,\dots,1)=1$, {\it or} from $\beta(0,\dots,0)=1$.  
It seems that the minimum value of $\beta(p,\dots,p)$ is in fact obtained
at $p=(1/2)^{n-1}$, interesting. 
The argument for $p_2^*\geq p_*$ is more subtle, requiring that $\sum_{m=1}^n (m-1)\mu_m<1/2$.
To get this estimate, we can use 
$$
\sum_{m-1}^n (m-1)\mu_m= \sum_0^n m\mu_m - \sum_1^n \mu_m= \sum_0^n m\mu_m -\sum_0^n \mu_m + \mu_0
=
n(1-p) - 1 + p^n,
$$
or, for $p=p_*$, 
$(n-1)(1-p) -  p/2= (n-1/2)(1-p) + 1/2 \leq \frac{(n-1/2)}{(n-1)}\ln(2)- p/2\leq (5/4) 0.7 -0.5\leq  0.4$,
the same as in the previous proof.
\er

\begin{corollary}\label{blocprop2}
	For $n$-player Guts, the strategy $p_1^*=(1/2)^{1/(n-1)}$ 
	satisfies termination condition \eqref{tcond} with respect to bloc strategies $p_2^*=p_3^*=\dots=p_n^*$.
	Thus, it is optimal in the strong sense that it
	can force with probability $1$ termination of the game with an expected total return $\geq 0$.
\end{corollary}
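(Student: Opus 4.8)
The plan is to recognize the statement as a direct packaging of the two preceding results, Corollary \ref{extcor} and Lemma \ref{betalem}, into the hypotheses of the master estimate, Theorem \ref{tthm}. Throughout I take the termination constant to be $t=1$, the single unit of ante that each player keeps encumbered in the pot (Section \ref{s:unbounded}), and I fix player 1's strategy at the cutoff $p_1^*=(1/2)^{1/(n-1)}$ singled out in Corollary \ref{elimcor}, testing it against an arbitrary bloc strategy $p_2^*=\cdots=p_n^*$ of the remaining players, parametrized by $p_2^*\in[0,1]$.

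First I would dispatch the two ``sign'' requirements in \eqref{tcond}. The bound $\beta\geq0$ is automatic, since $\beta$ is a weighted sum of replay probabilities against the nonnegative stake multipliers $(m-1)$ (cf.\ \eqref{bpos}, \eqref{betasum}). The bound $\alpha\geq\alpha_0\geq0$ holds with the choice $\alpha_0=0$: this is precisely Corollary \ref{extcor}, which asserts $\alpha(p_1^*,p_2^*,\dots,p_2^*)\geq0$ for all $p_2^*$ exactly when $p_1^*=(1/2)^{1/(n-1)}$.

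The heart of the matter is the remaining inequality $\alpha\geq t(\beta-1)+\eps$, which with $t=1$ reads $\beta-\alpha\leq1-\eps$ --- the form \eqref{alst_tcond} of Remark \ref{tcondrmk}. Here I would split on the sign of $p_2^*-p_1^*$ and quote Lemma \ref{betalem}. For $p_2^*\geq p_1^*$, estimate \eqref{lb} gives $\beta\leq2\ln2-\tfrac12<0.9$, so with $\alpha\geq0$ we get $\beta-\alpha\leq\beta<0.9$. For $p_2^*\leq p_1^*$, the exact cancellation identity \eqref{b-a} collapses $\beta-\alpha$ to the symmetric value $\beta(p_1^*,\dots,p_1^*)<0.9$. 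Thus $\beta-\alpha<0.9$ uniformly in $p_2^*$, and \eqref{tcond} holds with $\alpha_0=0$ and $\eps=0.1\in(0,t)$. Theorem \ref{tthm} then applies verbatim, giving $V_n\geq v_n=-(1-\eps)^n\to0$ and hence forcible value $\underline V\geq\alpha_0=0$; since player 1 may realize this by opting out at a large (finite) step, this is exactly the asserted strong optimality.

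I expect the only real subtlety---and the reason the statement is not a triviality---to be the second regime $p_2^*<p_1^*$, where aggressive bloc play makes the pot grow in expectation and $\beta$ can genuinely exceed $1$ (as already flagged for $n=3$ in Remark \ref{fullrmk}). A naive ``diminishing returns'' bound $\beta<1$ is therefore unavailable, and one must instead use the full strength of Theorem \ref{tthm} through the identity \eqref{b-a}: the growth of the stakes recorded in $\beta$ is compensated, case by case, by the matching ``virtual ante'' gains recorded in $\alpha$, so that the combination $\beta-\alpha$ reduces to the manifestly subunital symmetric value $\beta(p_1^*,\dots,p_1^*)$. Checking that a single constant $0.9$, hence a single $\eps$, serves simultaneously across both regimes $p_2^*\gtrless p_1^*$ is the one bookkeeping point requiring care; everything else is quotation of results already established.
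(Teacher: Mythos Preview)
Your proposal is correct and follows essentially the same route as the paper's proof: both rewrite \eqref{tcond} with $t=1$ as $\beta-\alpha\leq 1-\eps$, invoke $\alpha\geq 0$ from Corollary \ref{extcor}, and then feed the two halves \eqref{lb}--\eqref{b-a} of Lemma \ref{betalem} into Theorem \ref{tthm}. Your write-up is simply a more detailed unpacking of what the paper states in two lines, including the useful observation that the regime $p_2^*<p_1^*$ genuinely requires the cancellation identity \eqref{b-a} rather than a naive bound $\beta<1$.
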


\begin{proof}
	Rewriting \eqref{tcond} (for $t=1$) as $\beta-\alpha \leq 1-\eps$ for $\eps>0$,
	and recalling that $\alpha\geq 0$, we see by \eqref{lb}-\eqref{b-a} that \eqref{tcond}
	is satisfied for n-player guts with $p_1^*=(1/2)^{1/(n-1)}$ and bloc strategies $p_2^*=p_3^*=\cdots=p_n^*$.
	By Theorem \ref{tthm}, the result follows.
\end{proof}

\br\label{nomixrmkn}
Using the fact (Remark \ref{nconconrmk}) that the bloc payoff function is concave in $p_1^*$,
we may conclude as in Remark \ref{nomixrmk} that the pure strategy $p_1^*$ is the unique strategy,
pure or mixed, returning $\alpha\geq 0$ against bloc strategies $p_2^*=\dots = p_n^*$.
\er

\subsection{Nonbloc coalition I: negative return for player 1}\label{s:nonbloc}

We now show, by adapting the results already established against bloc strategies, that $p_1^*$ is {\it not}
optimal against non-bloc strategies, by exhibiting a particular class that force negative outcome.

\begin{proposition}\label{nonblocprop}
	For the optimal bloc-strategy $p_1^*=(1/2)^{1/(n-1)}$, $\alpha(p_1^*,0, p_1^*,\dots, p_1^*)=0$,
	while 
	\be\label{keynest}
	(d/dh)^+\alpha(p_1^*,0, p_1^*+h, \dots p_1^*+h)= -(n-2).
	\ee
\end{proposition}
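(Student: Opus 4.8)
The plan is to prove both assertions by the conditional-expectation (``differencing'') technique of Lemmas \ref{derivlem} and \ref{betalem}, working throughout with the virtual-ante bookkeeping of Section \ref{s:detail}: when $r\geq 1$ players hold, a dropping player receives immediate return $(r-2)$, a winning holder $(r-2)+n$, and a losing holder $(r-2)-n$, while $r=0$ (all drop) yields immediate return $0$ together with a replay. The decisive structural fact I will exploit is that a holding hand exactly at, or infinitesimally above, the threshold $p_1^*$ is almost surely the \emph{lowest} among all players using threshold $p_1^*$; hence its presence or absence never alters which player wins, and it affects only the number of holders $r$, i.e.\ only the stakes/virtual-ante term.

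For the first assertion I would write $\alpha(p_1^*,0,p_1^*,\dots,p_1^*)=\alpha(p_1^*,0,p_1^*,\dots,p_1^*)-\alpha(p_1^*,p_1^*,\dots,p_1^*)$, using $\alpha(p_1^*,\dots,p_1^*)=0$ from \eqref{n0symm}, and view the right-hand side as the effect of lowering player $2$'s threshold from $p_1^*$ to $0$. This difference is supported on the event $p_2\in[0,p_1^*)$ (probability $p_1^*$), on which player $2$ switches from dropping to holding with a sub-threshold hand. By the structural fact the win/lose terms are unaffected, so only the change in $r$ matters. Conditioning further on whether player $1$ holds and on the number $r$ of holders among players $3,\dots,n$ (with $P(r=0)=(p_1^*)^{n-2}$), a short case count—taking care of the single place where the sign flips, namely when player $1$ drops and $r=0$, so that adding player $2$ moves $r$ from $0$ (replay, return $0$) to $1$ (return $-1$)—gives conditional expected change $1-2(p_1^*)^{n-1}$, whence $\alpha(p_1^*,0,p_1^*,\dots,p_1^*)=p_1^*\big(1-2(p_1^*)^{n-1}\big)$. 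Substituting $p_1^*=(1/2)^{1/(n-1)}$, for which $(p_1^*)^{n-1}=\tfrac12$, makes this vanish.

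For the derivative I would argue that to first order in $h>0$ only configurations in which exactly one of players $3,\dots,n$ has $p_j\in[p_1^*,p_1^*+h)$ contribute, those with two or more such players having probability $O(h^2)$; by the symmetry of $\alpha$ in $p_2^*,\dots,p_n^*$ each of the $n-2$ players contributes equally, so
\[
(d/dh)^+\alpha(p_1^*,0,p_1^*+h,\dots,p_1^*+h)\big|_{h=0}=(n-2)\,\delta,
\]
where $\delta$ is the expected change in player $1$'s return when a single such player (say player $3$) switches from holding at the threshold to dropping. Here the computation is cleaner than in the first part because player $2$ (threshold $0$) always holds, so there is always at least one holder and the $r=0$ replay case never arises: the virtual-ante formula applies in both scenarios, and player $3$'s departure lowers $r$ by exactly $1$, contributing $-1$. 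The win/lose terms cancel by the structural fact—player $3$'s threshold hand never beats player $1$, nor any other threshold holder, and the only way player $3$ could itself win is against a lone sub-threshold player $2$ while player $1$ drops, an event in which player $1$'s dropper return depends on $r$ alone. Hence $\delta=-1$ and the derivative equals $-(n-2)$.

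The main obstacle is purely the bookkeeping: making the win/lose cancellation airtight in every configuration—in particular the borderline events where the switching player itself becomes the winner against a low-hand player $2$—and correctly tracking the single sign-flipping $r=0$ replay case in the first part, which is exactly the term responsible for the factor $1-2(p_1^*)^{n-1}$ and thus for the vanishing at $p_1^*=(1/2)^{1/(n-1)}$. Once these are handled, both claims reduce to elementary binomial sums.
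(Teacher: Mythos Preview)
Your argument is correct, and for the derivative \eqref{keynest} it is essentially identical to the paper's: both condition on a single player $j\in\{3,\dots,n\}$ having $p_j$ at the threshold, use symmetry to pick up the factor $(n-2)$, and then observe that because player~2 always holds the all-drop scenario never occurs, so the switch of player~$j$ from hold to drop changes player~1's return by exactly $-1$ in every case.

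For the first assertion the paper takes a shorter route than you do. Rather than differencing from $\alpha(p_1^*,\dots,p_1^*)=0$ and conditioning on $p_2<p_1^*$, it simply reads off $\alpha(0,p_1^*,\dots,p_1^*)=(n-1)p_1^*\big(2(p_1^*)^{n-1}-1\big)$ directly from the bloc formula \eqref{formeq} (with player~1 now playing threshold $0$) and then applies the symmetry \eqref{nsymm_aux} to obtain $\alpha(p_1^*,0,p_1^*,\dots,p_1^*)=-\tfrac{1}{n-1}\,\alpha(0,p_1^*,\dots,p_1^*)$, which vanishes at $p_1^*=(1/2)^{1/(n-1)}$. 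Your direct conditioning reproduces exactly the same value $p_1^*\big(1-2(p_1^*)^{n-1}\big)$ without invoking \eqref{formeq} or \eqref{nsymm_aux}; this is a bit longer but more self-contained, and has the virtue of making transparent that the sign-flipping $r=0$ replay event is the sole source of the factor $1-2(p_1^*)^{n-1}$.
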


\begin{proof}
	By \eqref{formeq}, we have
	$\alpha(0, p_1^*, \dots, p_1^*)= (n-1)(p_1^*-0)\Big(2(p_1^*)^{n-1}-1\Big)=0$,
	whence, by symmetry, 
	$$
	\alpha(p_1^*, 0, p_1^*, \dots, p_1)=-\big(1/(n-1)\big) \alpha(0, p_1^*, \dots, p_1^*)= 0.
	$$
	This verifies the first claim.  The second follows by the observation that the derivative is
	equal to the change in return for players 3-n dropping instead of holding in different scenarios,
	summed against conditional probabilities when one or more of $p_3,\dots, p_n$ are exactly $p_1^*$.
	By symmetry, this will give $(n-2)$ times the change in return when exactly one of $p_3,
	\dots,p_n$ is equal to $p_1^*$ and the others vary freely.  But, since player 2 always holds, in 
	every scenario this difference is equal to $-1$, giving the result.

	To explain a bit further, a player $3\leq j\leq n$ with hand $p_*$ will lose to player 1 if player 1 holds, so in this case player 1 will win an additional $+1$ in virtual ante behond what they would have won otherwise; 
thus, the difference in player 1's winning between dropping and holding of player $j$ is $-1$.
	If player 1 does not hold, then they will gain back $+1$ in virtual ante should player $j$ hold, beyond their return in case player $j$ should drop; thus, again the difference in return is $-1$.
\end{proof}

\begin{corollary}\label{nonbloccor}
	There exist non-bloc strategies for players 2-$n$ forcing a negative return against player 1
	playing with the bloc-optimal strategy $p_1^*=(1/2)^{1/(n-1)}$,
	namely, a combination of the bloc strategy
	$(-p_2^*, \dots, p_n^*)=(p_1^*, p_1^*, \dots p_1^*)$ and a non-bloc
	strategy $(-p_2^*, \dots, p_n^*)=(0, p_1^*+h, \dots p_1^*+h)$ with $h>0$ sufficiently
	small.
\end{corollary}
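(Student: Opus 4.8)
The plan is to read off from Proposition~\ref{nonblocprop} a non-bloc configuration whose immediate payoff to player~1 is strictly negative, and from Lemma~\ref{betalem} that the accompanying stakes multiplier stays below~$1$; blending this with the neutral bloc configuration preserves both features, so that the repeated-play formula of Example~\ref{repeat} returns player~1 the convergent value $\alpha/(1-\beta)<0$. Throughout, player~1 is pinned to the pure strategy $p_1^*=(1/2)^{1/(n-1)}$, so there is no deviation of player~1 to control: we need only exhibit one fixed (mixed) coalition strategy yielding a strictly negative value.

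First I would tabulate the two pure configurations. For the bloc configuration $\mathrm{A}=(p_1^*,\dots,p_1^*)$, symmetry \eqref{n0symm} gives $\alpha_{\mathrm{A}}=\alpha(p_1^*,p_1^*,\dots,p_1^*)=0$, while \eqref{b-a} of Lemma~\ref{betalem} gives $\beta_{\mathrm{A}}=\beta(p_1^*,p_1^*,\dots,p_1^*)<0.9$. For the non-bloc configuration $\mathrm{B}=(0,p_1^*+h,\dots,p_1^*+h)$, Proposition~\ref{nonblocprop} supplies the base value $\alpha(p_1^*,0,p_1^*,\dots,p_1^*)=0$ together with the one-sided derivative \eqref{keynest}, whence
\[
\alpha_{\mathrm{B}}(h):=\alpha(p_1^*,0,p_1^*+h,\dots,p_1^*+h)=-(n-2)h+o(h)<0
\]
for every $n\geq 3$ once $h>0$ is small enough. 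Since the stakes multiplier never exceeds $n-1$ (its value when all $n$ players hold), we also have the crude but sufficient bound $0\leq\beta_{\mathrm{B}}(h)\leq n-1$.

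I would then let players 2--$n$ jointly randomize, playing $\mathrm{B}$ with probability $\lambda$ and $\mathrm{A}$ with probability $1-\lambda$; this is a legitimate coalition strategy, requiring only a shared coin and no exchange of card values. By linearity of expectation,
\[
\alpha=(1-\lambda)\alpha_{\mathrm{A}}+\lambda\alpha_{\mathrm{B}}(h)=\lambda\,\alpha_{\mathrm{B}}(h)<0,
\qquad
\beta=(1-\lambda)\beta_{\mathrm{A}}+\lambda\beta_{\mathrm{B}}(h)\leq 0.9+\lambda(n-1).
\]
Fixing $h>0$ small enough that $\alpha_{\mathrm{B}}(h)<0$ and then $\lambda>0$ small enough that $\lambda(n-1)<0.1$, we secure $\alpha<0$ and $\beta\leq\beta_0<1$ simultaneously. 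Because player~1 repeats $p_1^*$ and the coalition repeats this fixed mixture, Example~\ref{repeat} identifies player~1's expected return with the geometric sum $\alpha/(1-\beta)$, well defined since $\beta<1$ and strictly negative since $\alpha<0$. The one delicate point is the passage from the one-sided derivative \eqref{keynest} to the strict inequality $\alpha_{\mathrm{B}}(h)<0$, which uses $n\geq 3$ so that $-(n-2)<0$; this degeneracy disappears precisely at $n=2$, consistent with the two-player game admitting a genuinely optimal strategy for player~1.
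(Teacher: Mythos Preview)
Your argument is correct for the statement as literally written, but you and the paper are using the mixture $\mathrm{A}/\mathrm{B}$ for different purposes, and the paper's proof in fact establishes more than the corollary announces.

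You fix player~1 at $p_1^*=(1/2)^{1/(n-1)}$ throughout. At that single point, configuration $\mathrm{A}$ is neutral ($\alpha_{\mathrm{A}}=0$), so in your proof the \emph{only} role of $\mathrm{A}$ is to pull $\beta$ below $1$; the negativity of $\alpha$ comes entirely from the small $\mathrm{B}$-component via \eqref{keynest}. You then close with the geometric-series value $\alpha/(1-\beta)$. This is sound, and your explicit control of $\beta$ (using Lemma~\ref{betalem} for $\beta_{\mathrm{A}}$ and the crude bound $\beta_{\mathrm{B}}\le n-1$) is a point the paper's own proof leaves implicit here, deferring it to the more careful Corollary~\ref{nABcor}.

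The paper, by contrast, lets $p_1^*$ vary and uses $\mathrm{A}$ not to tame $\beta$ but to force $\alpha<0$ for $p_1^*$ bounded away from $(1/2)^{1/(n-1)}$, while $\mathrm{B}$ handles $p_1^*$ near $(1/2)^{1/(n-1)}$; the convex combination then gives $\alpha<0$ for \emph{every} pure $p_1^*$. This stronger conclusion is what feeds, together with Remark~\ref{nomixrmkn}, into the ``Conclusion'' that no player-1 strategy whatsoever can avoid negative $\alpha$. Your proof does not deliver that, though it does deliver exactly what the corollary states. A minor stylistic point: rather than citing Example~\ref{repeat} (which sits in the diminishing-returns section and is phrased for finite games), it would be cleaner to observe directly that with $\beta<1$ the iteration $V_k=\alpha+\beta V_{k-1}$, $V_0=-t$, converges to $\alpha/(1-\beta)$, or to invoke Theorem~\ref{tthm} from the coalition's side.
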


\begin{proof}
	The optimal bloc strategy forces strictly negative outcome for $p_1^*$ bounded away from $(1/2)^{1/(n-1)}$,
	whereas, by Proposition \ref{nonblocprop}, 
	a non-bloc strategy $(-p_2^*, \dots, p_n^*)=(0, p_1^*+h, \dots p_1^*+h)$ with $h>0$ sufficiently small
	forces a strictly negative outcome for $p_1$ sufficiently near $(1/2)^{1/(n-1)}$, and is bounded elswhere.
	It follows that a convex combination of the two with vanishingly small weight on the non-bloc strategy
	guarantees a strictly negative outcome against any pure strategy $p_1^*$ for player 1.
\end{proof}

\br\label{assymrmk}
One may compute also the positive derivative with respect to $h$ of the winnings of player 2, who holds always,
to see that this is positive of order $n$.  Thus, the remaining $n-2$ players 3-n receive a negative outcome for
small $h$.  
\er

{\bf Conclusion:}
Combining Corollary \ref{nonbloccor} and Remark \ref{nomixrmkn}, we find that \emph{players 2-$n$ can force a strictly 
negative return} $\alpha\leq -c_0<0$ against any strategy, pure or mixed, of player 1.

\subsection{Nonbloc coalition II: winning strategy for players 2-$n$}\label{s:nonblocwin}

Similarly as in the 3-player case, the abstract arguments of the previous subsection may
be replaced by a concrete construction, giving at the same time the stronger result of a
winning strategy for players 2-$n$.

  \begin{lemma}\label{nL1}
	  For some fixed $C_0,C_1,C_2,\delta >0$, and $\epsilon>0$ sufficiently small,
	  the strategy A. $p_2^*=\cdots=p_n^*= (1/2)^{1/(n-1)} -\epsilon$ satisfies
	  $\alpha(p_1,p_2^*, \dots,p_3^*)< C_1 \epsilon2$ for 
	  $p_1\in [(1/2)^{1/(n-1)}- C_0\epsilon, (1/2)^{1/(n-1)} + C_0\epsilon]$,
	  $\alpha(p_1,p_2^*, \dots,p_3^*)<0$ for 
	  $p_1\not \in ((1/2)^{1/(n-1)}- C_0\epsilon,(1/2)^{1/(n-1)} + C_0\epsilon)$,
  and
	  $\alpha(p_1,p_2^*, \dots, p_3^*)\leq - \epsilon/C_2$ for 
	  $p_1\in [(1/2)^{1/(n-1)}- \delta, (1/2)^{1/(n-1)}+ \delta]$.
  \end{lemma}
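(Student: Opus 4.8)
The plan is to reproduce the proof of Lemma \ref{L1} in structure, replacing the explicit $3$-player bloc formula \eqref{3blocalpha} by the general-$n$ bloc payoff of Proposition \ref{blocprop}. Write $p_*:=(1/2)^{1/(n-1)}$ and set $p_2^*=\dots=p_n^*=p_*-\epsilon$, so $2p_*^{n-1}=1$ and $b(\epsilon):=1-2(p_2^*)^{n-1}=2\bigl(p_*^{n-1}-(p_*-\epsilon)^{n-1}\bigr)$ satisfies $c_-\epsilon\le b(\epsilon)\le c_+\epsilon$ for fixed $c_\pm>0$ and $\epsilon$ small, by the mean value theorem. As in Lemma \ref{L1} I split into $p_1\le p_2^*$ and $p_1\ge p_2^*$; and, as there, the quantitative bound $\alpha\le-\epsilon/C_2$ is really asserted for $p_1$ \emph{outside} the window $[p_*-\delta,p_*+\delta]$ (this is the form in which it feeds the $n$-player analogue of Corollary \ref{ABcor}; at $p_1=p_*$ one has $\alpha=\Theta(\epsilon^2)>0$, so the bound cannot hold inside).

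For $p_1\le p_2^*$ everything is immediate because \eqref{formeq} is exact and linear: $\alpha=(n-1)(p_2^*-p_1)\bigl(2(p_2^*)^{n-1}-1\bigr)=-(n-1)(p_2^*-p_1)\,b(\epsilon)\le0$. On $[p_*-C_0\epsilon,p_2^*]$ this is $\le0<C_1\epsilon^2$; for $p_1\le p_*-\delta$ one has $p_2^*-p_1\ge\delta/2$, giving $\alpha\le-(n-1)(\delta/2)c_-\,\epsilon\le-\epsilon/C_2$ for $C_2$ large; and for $p_1<p_*-C_0\epsilon$ with $C_0>1$ one has $p_2^*-p_1>(C_0-1)\epsilon>0$, so $\alpha<0$. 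For $p_1\ge p_2^*$ the window bound comes straight from the right-hand inequality of \eqref{formineq}: $\alpha<(n-1)(p_2^*-p_1)\bigl(2(p_2^*)^{n-1}-1\bigr)=(n-1)(p_1-p_2^*)\,b(\epsilon)$, so on $p_1\le p_*+C_0\epsilon$ (where $p_1-p_2^*\le(C_0+1)\epsilon$) we get $\alpha<(n-1)(C_0+1)c_+\,\epsilon^2=:C_1\epsilon^2$.

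The two remaining claims for $p_1\ge p_2^*$ — namely $\alpha<0$ for $p_1>p_*+C_0\epsilon$ and the bound $\alpha\le-\epsilon/C_2$ for $p_1\ge p_*+\delta$ — I would extract from concavity. By Remark \ref{nconconrmk} the bloc payoff is concave in $p_1$, so $\{p_1:\alpha\ge0\}$ is an interval; since $\alpha(p_2^*,\dots,p_2^*)=0$ with one-sided derivative $(n-1)\,b(\epsilon)>0$ (Lemma \ref{derivlem} at $p_2^*$) and $\alpha<0$ just to its left, this interval is $[p_2^*,p_{**}]$ with $p_{**}\ge p_*$. It thus suffices to prove $p_{**}<p_*+C_0\epsilon$ for a fixed $C_0=C_0(n)$: concavity then gives $\alpha<0$ for all $p_1>p_*+C_0\epsilon$, and, $\alpha$ being decreasing past its maximum, $\alpha(p_1)\le\alpha(p_*+\delta)$ for $p_1\ge p_*+\delta$. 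Finally, since at $\epsilon=0$ the payoff $\alpha(\cdot,p_*,\dots,p_*)$ is concave, is $\le0$, and vanishes only at $p_*$ (Corollary \ref{extcor} together with the critical point from Lemma \ref{derivlem}), one has $\alpha(p_*+\delta,p_*,\dots,p_*)\le-2c_0<0$ for fixed small $\delta$, whence by continuity in $\epsilon$ we get $\alpha(p_*+\delta,p_2^*,\dots)\le-c_0\le-\epsilon/C_2$ for $\epsilon$ small.

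The crux, and the main obstacle, is showing $p_{**}<p_*+C_0\epsilon$, i.e. $\alpha(p_*+C_0\epsilon,p_2^*,\dots,p_2^*)<0$ for $C_0$ large and $\epsilon$ small: neither inequality of Proposition \ref{blocprop} settles it, since the right-hand one yields only a positive bound and the left-hand one yields only $p_{**}\ge p_*$, while at $p_1=p_*+C_0\epsilon$ the leading term $(n-1)(p_2^*-p_1)(2p_1^{n-1}-1)$ and the corrector of \eqref{formineq} are \emph{both} $\Theta(\epsilon^2)$, so the sign is decided by their $\epsilon^2$-coefficients. I would settle this by a second-order expansion of $A(p_1,\epsilon):=\alpha(p_1,p_*-\epsilon,\dots,p_*-\epsilon)$ about $(p_*,0)$, where $A(p_*,0)=0$, $\partial_{p_1}A(p_*,0)=(n-1)(1-2p_*^{n-1})=0$ by Lemma \ref{derivlem}, and $\partial_\epsilon A(p_*,0)=-(n-1)\,\partial_{p_2}\alpha(p_*,\dots,p_*)=0$ by the computation in Corollary \ref{elimcor}. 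Hence $A(p_*+C_0\epsilon,\epsilon)=\tfrac{\epsilon^2}{2}\bigl(\partial_{p_1}^2A\,C_0^2+2\,\partial_{p_1}\partial_\epsilon A\,C_0+\partial_\epsilon^2A\bigr)+o(\epsilon^2)$, a downward parabola in $C_0$ once $\partial_{p_1}^2A(p_*,0)<0$, so it is strictly negative for $C_0$ above a threshold depending only on $n$ (for $n=3$, where $\partial_{p_1}^2A(p_*,0)=-12p_*$, one recovers the constant of Lemma \ref{L1}). The only delicate point is the nondegeneracy $\partial_{p_1}^2A(p_*,0)<0$, which I would verify from the exact corrector obtained by evaluating the $\delta_m,\rho_m$ bookkeeping in the proof of Proposition \ref{blocprop} to second order.
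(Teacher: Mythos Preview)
Your proposal is correct and is considerably more careful than the paper's own proof, which consists of the single sentence ``As in the proof of Lemma \ref{L1}, the estimates follow readily from formulae \eqref{formeq}--\eqref{formineq}.'' You are right that the third claim is misstated in both Lemma \ref{L1} and Lemma \ref{nL1}: the bound $\alpha\le-\epsilon/C_2$ is needed---and is in fact what the paper's argument for Lemma \ref{L1} actually establishes---for $p_1$ \emph{outside} $[p_*-\delta,p_*+\delta]$, as the use of these lemmas in Corollaries \ref{ABcor} and \ref{nABcor} confirms. You are also right that neither inequality of \eqref{formineq} alone settles the sign of $\alpha$ on $[p_*+C_0\epsilon,1]$: at the edge of the window the leading negative term $(n-1)(p_2^*-p_1)(2p_1^{n-1}-1)$ and the positive corrector $\delta$ are both $\Theta(\epsilon^2)$, so the paper's ``readily'' is doing real work here.

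Your resolution via concavity (Remark \ref{nconconrmk}) plus a second-order expansion is sound. The deferred nondegeneracy $\partial_{p_1}^2A(p_*,0)<0$ does indeed fall out of the exact corrector series referenced in Remark \ref{3ineqrmk}: to second order in $h=p_1-p_*$ at $\epsilon=0$ one obtains $\alpha(p_*+h,p_*,\dots,p_*)=-c_nh^2+O(h^3)$ with $c_n>0$ (for $n=3$ the exact $\delta=2p_1(p_1-p_2^*)^2$ yields $c_3=6p_*$, matching your $\partial_{p_1}^2A=-12p_*$). A route closer in spirit to the paper's Lemma \ref{L1} argument would be to use that exact series to write $\alpha=(n-1)(p_1-p_2^*)F(p_1,p_2^*)$ and show $F<0$ once $p_1^{n-1}>1/2+C\epsilon$, generalizing the $n=3$ manipulation $\alpha\le 2(p_1-p_2^*)\bigl(1-p_1^2-(p_2^*)^2\bigr)$; but your approach is equally valid and arguably cleaner.
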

	
  \begin{proof}
  As in the proof of Lemma \ref{L1}, the estimates follow readily from
  formulae \eqref{formeq}--\eqref{formineq}.
  \end{proof}

  \begin{lemma}\label{nL2}
For some $\delta, \sigma >0$, and $C_3>0$,
	  the strategy B. $(p_2^*,\dots,  p_n^*)= (0, (1/2)^{1/(n-1)} + \delta ,\dots, (1/2)^{1/(n-1)} + \delta )$ 
  satisfies $\alpha(p_1,p_2^*,\dots, p_n^*)<-1/C_3$ 
  for $p_1\in [(1/2)^{1/(n-1)}-\sigma, (1/2)^{1/(n-1)}+\sigma]$. 
  \end{lemma}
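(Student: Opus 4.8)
The plan is to follow the proof of Lemma \ref{L2} for the $3$-player case, replacing the explicit cubic formula used there by the two facts furnished by Proposition \ref{nonblocprop}. Write $p_*=(1/2)^{1/(n-1)}$ for the bloc-optimal cutoff. Proposition \ref{nonblocprop} records exactly what is needed about strategy B when player $1$ also plays $p_*$: first, $\alpha(p_*,0,p_*,\dots,p_*)=0$; and second, the right-hand derivative in the common perturbation $h$ of players $3$ through $n$ is $(d/dh)^+\alpha(p_*,0,p_*+h,\dots,p_*+h)=-(n-2)$, which is strictly negative precisely when $n\geq 3$.

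First I would fix $\delta$. Since the right derivative at $h=0$ equals $-(n-2)<0$, we have $\alpha(p_*,0,p_*+\delta,\dots,p_*+\delta)=-(n-2)\delta+o(\delta)$ as $\delta\to 0^+$, so this quantity is strictly negative for all sufficiently small $\delta>0$; fix one such $\delta$ and put $1/C_3:=-\tfrac12\,\alpha(p_*,0,p_*+\delta,\dots,p_*+\delta)>0$, so that $\alpha<-1/C_3$ holds at the base cutoff $p_1^*=p_*$.

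Next I would propagate negativity from the single cutoff $p_1^*=p_*$ to the window $[p_*-\sigma,p_*+\sigma]$ by continuity, just as in Lemma \ref{L2}. The key is that if $\sigma<\min\{\delta,p_*\}$ then every $p_1^*\in[p_*-\sigma,p_*+\sigma]$ obeys $0<p_1^*<p_*+\delta$, so the cutoffs retain the fixed ordering $p_2^*=0<p_1^*<p_3^*=\cdots=p_n^*=p_*+\delta$. On this single ordering $\alpha(\,\cdot\,,0,p_*+\delta,\dots,p_*+\delta)$ is one fixed polynomial in $p_1^*$, hence continuous; so after shrinking $\sigma$ if necessary we obtain $\alpha(p_1^*,0,p_*+\delta,\dots,p_*+\delta)<-1/C_3$ throughout the window, which is the claim. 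I expect the only genuinely delicate point to be the bookkeeping in this last step—confirming we remain on a single branch of the piecewise-polynomial payoff, so that plain continuity (rather than the $C^1$ matching of Remark \ref{C1rmk}) suffices—together with the observation that the strict sign $-(n-2)<0$ forces $n\geq 3$, consistent with strategy B degenerating in the $2$-player case.
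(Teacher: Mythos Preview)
Your proposal is correct and follows essentially the same approach as the paper: reduce to showing $\alpha(p_*,0,p_*+\delta,\dots,p_*+\delta)<0$ for small $\delta>0$ via the two facts of Proposition~\ref{nonblocprop} (the value $0$ at $h=0$ and the right derivative $-(n-2)$), then extend to a window in $p_1^*$ by continuity. Your explicit attention to the ordering constraint $\sigma<\min\{\delta,p_*\}$ keeping us on a single branch, and your concrete choice of $C_3$, are slight refinements over the paper's terser ``by continuity,'' but the argument is the same.
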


 \begin{proof} 
	 As in the proof of Lemma \ref{L2}, it is enough to show that 
  $\alpha( (1/2)^{1/(n-1)},p_2^*,\dots, p_n^*)<0$, the result then following by continuity.
	 Noting, by \eqref{formeq}, that 
	 $$
	 \alpha( (1/2)^{1/(n-1)},0, (1/2)^{1/(n-1)},\dots (1/2)^{1/(n-1)})=0,
	 $$
	 we find that it is enough to show 
		 $(d/dh) \alpha( (1/2)^{1/(n-1)},0, (1/2)^{1/(n-1)}+h,\dots (1/2)^{1/(n-1)}+h)<0$.
		 But, this was already shown in \eqref{keynest}, Proposition \ref{nonblocprop}.
  \end{proof}

Using the results of Lemmas \ref{nL1}-\ref{nL2} and \ref{betalem} 
we may apply, line by line, the proof of Corollary \ref{ABcor} to obtain
the following generalization to the n-player case, $n\geq 4$.

\begin{corollary}\label{nABcor}
	For some fixed $C_0,C_1,C_2,C_3, \delta, \sigma >0$,
	$C>0$ sufficiently large, and $\epsilon=\epsilon(C)>0$ sufficiently small,
  the mixed strategy $ (1-C\epsilon^2)A+ C\epsilon^2 B$ returns $\alpha<0$, $\beta\leq \beta_0<1$ 
	for all $p_1^*\in [0,1]$, and therefore \emph{is a winning strategy for players 2-$n$}.
\end{corollary}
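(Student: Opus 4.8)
The plan is to transcribe the proof of Corollary~\ref{ABcor} essentially verbatim, replacing $1/\sqrt{2}$ throughout by the $n$-player cutoff $p_*:=(1/2)^{1/(n-1)}$ and letting Lemmas~\ref{nL1}, \ref{nL2}, and \ref{betalem} play the roles that Lemmas~\ref{L1}, \ref{L2} and the explicit $3$-player value of $\beta$ played there. First I would fix the constants $C_0,C_1,C_2,C_3,\delta,\sigma$ furnished by Lemmas~\ref{nL1}--\ref{nL2}, then choose $C$ large and finally $\epsilon=\epsilon(C)>0$ small, and split $[0,1]$ into the same three $p_1^*$-regimes: the \emph{far} regime $|p_1^*-p_*|\geq\delta$, the \emph{annulus} $\delta>|p_1^*-p_*|\geq C_0\epsilon$, and the \emph{central window} $|p_1^*-p_*|\leq C_0\epsilon$.

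On the far regime, Lemma~\ref{nL1} gives that the $A$-component of the mixed strategy contributes $\alpha\leq-\epsilon/C_2$ while the $B$-component is merely bounded, say by $C_4$, so the convex combination returns at most $C_4 C\epsilon^2-\epsilon/C_2<0$ once $\epsilon(C)$ is small. On the annulus both $A$ (Lemma~\ref{nL1}) and $B$ return $\alpha<0$, so their combination is negative. On the central window Lemma~\ref{nL1} only bounds the $A$-contribution by $2C_1\epsilon^2$, but Lemma~\ref{nL2} forces the $B$-contribution below $-1/C_3$; weighting $B$ by $C\epsilon^2$ and taking $C>2C_1C_3$ makes the sum $\leq 2C_1\epsilon^2-C\epsilon^2/C_3<0$. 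Continuity of $\alpha$ on the compact interval $[0,1]$ then upgrades these pointwise bounds to a uniform $\alpha\leq-c_0<0$.

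The one step needing genuinely new input is the uniform bound $\beta\leq\beta_0<1$. Strategy $A$ is the bloc strategy with common cutoff $p_*-\epsilon$, and extending the computation behind \eqref{betasum} (cf.\ Remark~\ref{improvermk}) yields the two-parameter bloc identity $\beta(p_1^*,r,\dots,r)=(n-1)(1-r)+p_1^*\big(2r^{n-1}-1\big)$. Because $p_*^{n-1}=1/2$, the coefficient of $p_1^*$ vanishes at $r=p_*$, so $\beta(p_1^*,p_*,\dots,p_*)=(n-1)(1-p_*)\leq\ln 2$ is \emph{independent of $p_1^*$} --- precisely the $n$-player analogue of the constancy $\beta(p_1^*,1/\sqrt{2},1/\sqrt{2})=2-\sqrt{2}$ exploited for $n=3$, and consistent with Lemma~\ref{betalem}. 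Replacing $r=p_*$ by $r=p_*-\epsilon$ makes $2r^{n-1}-1<0$, so the $p_1^*$-coefficient is nonpositive and $\beta$ is maximized at $p_1^*=0$, giving $\beta\leq(n-1)(1-p_*+\epsilon)\leq\ln 2+(n-1)\epsilon\leq\beta_1<1$ for $\epsilon$ small, uniformly in $p_1^*$. Strategy $B$ has $\beta$ merely bounded and enters with weight $C\epsilon^2\to 0$, so the mixed strategy satisfies $\beta\leq\beta_0<1$ for every $p_1^*$.

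With $\alpha\leq-c_0<0$ and $\beta\leq\beta_0<1$ in hand uniformly in $p_1^*$, I would apply Theorem~\ref{tthm} with termination constant $t=1$ to the coalition of players 2-$n$, whose immediate return $-\alpha\geq c_0>0$ and whose stakes multiplier is the same symmetric $\beta$; condition \eqref{tcond} then reads $-\alpha\geq(\beta-1)+\epsilon'$, i.e.\ $\alpha+\beta\leq 1-\epsilon'$, which holds since $\alpha<0$ and $\beta\leq\beta_0<1$. This gives a strictly positive expected return for players 2-$n$ against every pure $p_1^*$, hence, by linearity of the return in player~1's mixing measure, against every mixed strategy as well. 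The main obstacle is exactly this uniform $\beta$-bound: for $n=3$ it was automatic because $\beta(p_1^*,1/\sqrt{2},1/\sqrt{2})$ is literally constant, whereas for general $n$ one must first recognize the vanishing of the $p_1^*$-coefficient at the critical cutoff and then control its $\epsilon$-perturbation; the remaining $\alpha$-bookkeeping transfers mechanically from Corollary~\ref{ABcor}.
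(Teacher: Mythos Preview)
Your proposal is correct and follows essentially the same approach as the paper, whose proof is simply to apply the argument of Corollary~\ref{ABcor} line by line with Lemmas~\ref{nL1}, \ref{nL2}, and \ref{betalem} in place of their $3$-player counterparts. Your handling of the $\beta$-bound is in fact more explicit than the paper's bare citation of Lemma~\ref{betalem}: you derive the two-parameter bloc identity $\beta(p_1^*,r,\dots,r)=(n-1)(1-r)+p_1^*(2r^{n-1}-1)$ and exploit the vanishing of the $p_1^*$-coefficient at $r=p_*$ to obtain the uniform bound directly, which is exactly the $n$-player analogue of the constancy $\beta(p_1^*,1/\sqrt{2},1/\sqrt{2})=2-\sqrt{2}$ used in the proof of Corollary~\ref{ABcor}.
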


\section{Partial analysis of the discrete $2$-player game}\label{s:discrete}
We now turn to the original, discrete problem, adapting the arguments developed for the continuous case.
Recall that there are different versions of Guts depending on the number of cards $m$ drawn for each 
hand, with $m$ typically equal to $2$, $3$, or $5$.

\subsection{Case $m=1$ (one-card draw) }\label{s:onecard}
Guts could in principle be played with a single-card draw, without much changing the game.
We start with this simplest case $m=1$ to illustrate the approach.

In a standard 52-card deck, cards are strictly lexicographically ordered by number and suit,
with numbers from $2$ to $14$ (counting face cards Jack, Queen, King, Ace as $11$, $12$, $13$, and $14$)
and suits, in increasing order of value, Clubs (C), Hearts (H), Diamonds (D), and Spades (S).
For purposes of comparison they could equivalently be numbered $1$ to $N=52$ by order of their value.
More generally, consider {\it any} strictly ordered deck of $N$ cards, numbered from $1$ to $N$.

We may label the strategies of player 1 and player 2 by $i_1$ and $i_2$, where $i$ means hold for 
any hand with value $>i$.
As each card is equally likely to be selected in a fair draw, the probability of drawing a card
of value $\leq i$ is 
\be\label{p(i)}
p(i):=i/N;
\ee
this relates the discrete strategies $i$ to the probabilities 
$p_j^*$ of the continuous model.
Since cards are dealt without replacement, the conditional probability for the hand of player 2 is affected
by the hand of player 1.
Taking this into account, we define the auxiliary (conditional) probability $\tilde p(i)$ to
be the probability that player 2's card is of value $\leq i$ given that player 1's card is of value $\leq i$,
or
\be\label{tp(i)}
\tilde p(i):=(i-1)/(N-1).
\ee

We define the payoff functions $\alpha(i_1,i_2)$ and $\beta(i_1,i_2)$ similarly as for the continuous case,
as, respectively, the expected one-shot payoff and multiplication of stakes given the choice
of strategies $i_1$ and $i_2$.

\begin{proposition}\label{d2payprop}
For 2-player guts with $m=1$, the payoff function is $\Psi(i_1^*,i_2^*)=\alpha + \beta V$, where 
	\be\label{d2beta}
	\beta(i_1^*, i_2^*)= p(i_1^*)\tilde p(i_2^*)+ (1-p(i_1^*))(1-\tilde p(i_2^*))
\ee
and
	\be\label{d2alpha}
	\alpha(i_1^*,i_2^*)=
	\begin{cases}
		(1-2\tilde p(i_1^*))(p(i_1^*)-p(i_2^*)) & i_2^*\leq i_1^*,\\
		(1-2\tilde p(i_2^*))(p(i_1^*)-p(i_2^*)) & i_2^*> i_1^*.
	\end{cases}
	\ee
\end{proposition}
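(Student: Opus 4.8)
The plan is to adapt the two derivations of Proposition \ref{2payprop} to a strictly ordered deck of $N$ cards, the single new feature being that cards are dealt \emph{without replacement}: player 2's draw is conditioned on player 1's, and this conditioning is exactly what the auxiliary probability $\tilde p$ of \eqref{tp(i)} encodes. The guiding principle is that wherever the continuous computation multiplied two \emph{independent} marginal probabilities, I keep the marginal $p(\cdot)$ of \eqref{p(i)} for player-1 events but replace player 2's factor by the conditional $\tilde p(\cdot)$. Throughout, write $c_1,c_2$ for the two drawn card values, so that player $j$ holds iff $c_j>i_j^*$.

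For $\beta$ I would repeat the observation that the round is replayed precisely on the two disjoint events ``both drop'' and ``both hold,'' and that in the two-player game the pot never grows, so $\beta$ is just the replay probability. Conditioning player 2 on player 1, the drop--drop event has probability $p(i_1^*)\tilde p(i_2^*)$: player 1 drops with probability $p(i_1^*)$, after which (for $i_1^*\le i_2^*$) one of the $i_2^*$ low cards is removed from the deck, so player 2 drops with conditional probability exactly $\tilde p(i_2^*)=(i_2^*-1)/(N-1)$, independent of the particular value $c_1\le i_2^*$. The hold--hold event is obtained by the analogous conditioning on the high cards, giving $(1-p(i_1^*))(1-\tilde p(i_2^*))$, and summing yields \eqref{d2beta}. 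As a sanity check, $\tilde p(i)\to p(i)$ and \eqref{d2beta} collapses to the continuous \eqref{2beta} as $N\to\infty$.

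For $\alpha$ I would use the differencing device of Section \ref{s:2alt} rather than the five-case tally, because it localizes the entire nonzero contribution to one narrow band of cards, on which the conditional law of player 2 is transparent. Take first $i_1^*\le i_2^*$ and write $\alpha(i_1^*,i_2^*)=\alpha(i_1^*,i_2^*)-\alpha(i_2^*,i_2^*)$, using $\alpha(i_2^*,i_2^*)=0$ by symmetry. The strategies $i_1^*$ and $i_2^*$ for player 1 differ event-by-event only when $i_1^*<c_1\le i_2^*$, where $i_1^*$ holds while $i_2^*$ drops; this band has marginal probability $p(i_2^*)-p(i_1^*)$. Conditioning on it, $c_1\le i_2^*$, so player 2 holds (and, being higher, wins) with conditional probability $1-\tilde p(i_2^*)$, changing player 1's return by $(-2)-(-1)=-1$, and drops (and loses) with conditional probability $\tilde p(i_2^*)$, changing it by $(+1)-0=+1$. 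Hence $\alpha(i_1^*,i_2^*)=\big(p(i_2^*)-p(i_1^*)\big)\big(2\tilde p(i_2^*)-1\big)=(1-2\tilde p(i_2^*))\big(p(i_1^*)-p(i_2^*)\big)$, the second branch of \eqref{d2alpha}. The branch $i_2^*\le i_1^*$ then follows from the two-player zero-sum symmetry $\alpha(i_1^*,i_2^*)=-\alpha(i_2^*,i_1^*)$, the $n=2$ specialization of \eqref{nsymm_aux}, applied to the formula just proved.

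The one genuinely delicate point, and the step I would watch most carefully, is the conditional-probability bookkeeping: deciding for each factor whether it is a marginal player-1 event (use $p$) or a response of player 2 after one card has been removed (use $\tilde p$), and in particular not double-counting the diagonal $c_1=c_2$. The differencing argument is what makes the $\alpha$ computation clean, since on the gap $i_1^*<c_1\le i_2^*$ player 1's card is certainly $\le i_2^*$, so exactly one low card is already spoken for and player 2's conditional split is unambiguously $\tilde p(i_2^*)$ versus $1-\tilde p(i_2^*)$; carrying out the same split directly on the separate hold--hold and drop--hold events of the five-case version would force one to track these without-replacement corrections in several places at once. I therefore expect the hold--hold contribution to $\beta$ to be the spot requiring the most care, and I would verify it against the small-$N$ exact count and against the $N\to\infty$ limit before declaring \eqref{d2beta} established.
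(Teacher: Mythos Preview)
Your proposal is correct and follows essentially the same approach as the paper: the $\beta$ computation via the drop--drop and hold--hold probabilities, and the $\alpha$ computation via the differencing device of Section~\ref{s:2alt} conditioned on the band $i_1^*<c_1\le i_2^*$, with the remaining branch obtained from the zero-sum symmetry $\alpha(i_1^*,i_2^*)=-\alpha(i_2^*,i_1^*)$. Your caution about the hold--hold term is well placed: the ``analogous conditioning'' literally yields $(1-p(i_2^*))(1-\tilde p(i_1^*))$ when $i_1^*\le i_2^*$, which equals $(1-p(i_1^*))(1-\tilde p(i_2^*))$ only after observing that both reduce to $(N-i_1^*)(N-i_2^*)/N(N-1)$.
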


\begin{proof}
The game terminates unless both players drop, or both players hold, i.e.,
unless $0<i_1<i_1^*$ and $0<i_2<i_2^*$ or $i_1^*\leq i_1\leq 1$ and $i_2^*\leq i_2\leq 1$.
	These are disjoint events with probabilities $p(i_1^*)\tilde p(i_2^*)$ and 
	$(1-p(i_1^*))(1-\tilde p(i_2^*))$. 
	Thus, the probability of replaying the game is 
	$p(i_1^*)\tilde p(i_2^*)+ (1-p(i_1^*))(1-\tilde p(i_2^*))$, and, since the size of the pot
does not change in the $2$-player game, we have therefore immediately
	$\beta(i_1^*, i_2^*)= p(i_1^*)\tilde p(i_2^*)+ (1-p(i_1^*))(1-\tilde p(i_2^*))$ as claimed.

We compute $\alpha$ by the alternative argument of Section \ref{s:2alt}.
Take without loss of generality $i_1^*\leq i_2^*$. By symmetry, $\alpha(i_2^*,i_2^*)=0$.  Writing
$\alpha(i_1^*,i_2^*)$ as the difference $\alpha(i_1^*,i_2^*)- \alpha(i_2^*,i_2^*)$, we may condition
	as in the continuous argument on the case $i_1^*\leq i_1\leq  i_2^*$.
	There are again two subcases: (i) $i_2\geq i_2^*$, in which case player 2 holds and (by our conditioning
	assumption) wins. (ii) $i_2<i_2^*$, in which case player 2 drops and thus loses.
	Meanwhile, the difference in payoff for player 1 between strategy $i_1^*$ and $i_2^*$
	is, by assumption $i_1^*\leq i_2^*$, the difference between player 1 holding and dropping: for
	case (i) (since they lose the whole pot if they hold but only their ante if they drop) $(-2)-(-1)=-1$.
	for case (ii) (since they win the ante if they hold and nothing if they drop) $(+1)-(0)=+1$.

	Computing that case (i) has probability $(p(i_2^*)-p(i_1^*))(1-\tilde p(i_2^*))$ and case (ii) probability
	$(p(i_2^*)-p(i_1^*))\tilde p(i_2^*)$, we thus have an expected difference in return of
	$$
	(p(i_2^*)-p(i_1^*))(1-\tilde p(i_2^*))(-1) + (p(i_2^*)-p(i_1^*))\tilde p(i_2^*)(+1) = 
	(p(i_2^*)-p(i_1^*))(2\tilde p(i_2^*)-1), 
	$$
	as claimed. The formula in case $i_1^*>i_2^*$ then follows by symmetry.
	\end{proof}

\begin{corollary}\label{d2opt}
	For any $N\geq 2$ (the minimum to play a game),
	the largest value $i_1^*$ such that $\tilde p(i_1^*)\leq 1/2$ is an optimal pure strategy for player 1,
	guaranteeing nonnegative return; that is,
	\be\label{d2optimum}
	i_1^*=\begin{cases} N/2 & \hbox{\rm $N$ even},\\
		(N-1)/2, \, (N+1)/2 & \hbox{\rm $N$ odd}.
	\end{cases}
	\ee
\end{corollary}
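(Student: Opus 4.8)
The plan is to mirror the structure of the continuous 2-player argument (Corollary \ref{2opt}): I would show that the proposed strategy $i_1^*$ makes the immediate return $\alpha(i_1^*,i_2^*)$ nonnegative against every opposing pure strategy $i_2^*$, that the replay multiplier satisfies $\beta(i_1^*,i_2^*)\leq 1-\sigma<1$ for some $\sigma>0$, and then invoke Theorem \ref{tthm} (with $\alpha_0=0$) to conclude that player 1 forces a return $\geq 0$. Optimality then follows from symmetry of the game, exactly as in the continuous case.

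First I would record the elementary arithmetic pinning down $i_1^*$. Since $\tilde p(i)=(i-1)/(N-1)$ is strictly increasing in $i$, the inequality $\tilde p(i)\leq 1/2$ is equivalent to $i\leq (N+1)/2$; the largest integer solution is $N/2$ when $N$ is even and $(N+1)/2$ when $N$ is odd, with $\tilde p((N+1)/2)=1/2$ in the odd case. I would also note that for $N$ odd the value $(N-1)/2$ satisfies $\tilde p<1/2$ strictly, which is why both listed values turn out to be optimal. In every case $1\leq i_1^*\leq N-1$, so $p(i_1^*)=i_1^*/N$ lies strictly in $(0,1)$.

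The heart of the proof is the sign of $\alpha$, using formula \eqref{d2alpha} together with the monotonicity of $\tilde p$. For $i_2^*\leq i_1^*$ one has $p(i_1^*)-p(i_2^*)\geq 0$ and, since $\tilde p(i_1^*)\leq 1/2$, also $1-2\tilde p(i_1^*)\geq 0$, so the product is $\geq 0$. For $i_2^*>i_1^*$ one has $p(i_1^*)-p(i_2^*)<0$; here the key point is that, because $i_1^*$ is the \emph{largest} index with $\tilde p\leq 1/2$, every $i_2^*>i_1^*$ satisfies $\tilde p(i_2^*)\geq 1/2$, whence $1-2\tilde p(i_2^*)\leq 0$ and again the product is $\geq 0$. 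This is precisely the step that forces the choice of cutoff and is the main obstacle: one must handle the boundary equality $\tilde p((N+1)/2)=1/2$ carefully in the odd case (it gives $\alpha=0$, not a sign error), and one must observe that the relevant threshold sits on the \emph{conditional} probability $\tilde p$ rather than on $p$ as in the continuous model — this is the only substantive difference from Corollary \ref{2opt}.

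Finally I would bound $\beta$. Writing $a=p(i_1^*)$ and $b=\tilde p(i_2^*)$, formula \eqref{d2beta} reads $\beta=ab+(1-a)(1-b)=(1-a)+(2a-1)b$, which is affine in $b$ and hence maximized over $b\in[0,1]$ at $\max\{a,1-a\}<1$ since $a\in(0,1)$. Taking $\sigma:=1-\max\{a,1-a\}>0$ gives $\beta\leq 1-\sigma$. With $\alpha\geq 0=\alpha_0$ and $0\leq \beta\leq 1-\sigma$, Remark \ref{tcondrmk} supplies hypothesis \eqref{tcond} of Theorem \ref{tthm}, so player 1's forcible value $\underline V$ is $\geq \alpha_0=0$; symmetry of Guts then precludes $\underline V>0$, so the strategy is optimal with value $0$, as claimed.
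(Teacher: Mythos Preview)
Your proposal is correct and follows essentially the same route as the paper's own proof: verify $\alpha(i_1^*,i_2^*)\geq 0$ case-by-case from \eqref{d2alpha} using that $i_1^*$ is the largest index with $\tilde p\leq 1/2$, check $\beta<1$ from \eqref{d2beta} via $p(i_1^*)\in(0,1)$, and apply Theorem~\ref{tthm}. Your treatment is slightly more explicit than the paper's (particularly the affine-in-$b$ computation giving $\beta\leq\max\{a,1-a\}$ and the handling of the second odd-case optimum $(N-1)/2$), but the argument is the same in substance.
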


\begin{proof}
	By \eqref{d2alpha}, $i_1^*$ guarantees a one-shot return $\alpha\geq 0$ if and only if
	(i) $\tilde p(i_1^*)\leq 1/2$, and (ii) $\tilde p(i_2^*)\geq 1/2 $ for any $i_2^*> i_1^*$.
	Taking $i_1^*$ to be the largest value for which (i) is satisfied, we find automatically that (ii) is
	satisfied as well. 
	Moreover, by \eqref{d2beta}, $\beta(i_1^*,\cdot) \leq \theta<1$ unless $p(i_1^*)=1$ or $p(i_1^*)=0$.
	The first cannot happen by definition of $i_1^*$; the second happens only if $1/N>1/2$, or $N=1$, a 
	contradiction.
	Thus, by Theorem \ref{tthm}, strategy $i_1^*$ guarantees a nonnegative return.
	Finally, rewriting conditions $\tilde p(i_1^*)=(i_1^*-1)/(N-1)\leq 1/2$ 
	and $\tilde p(i_1^*+1)=(i_1^*)/(N-1)\geq 1/2$ together as
	$
	(N-1)/2\leq i_*\leq 1+ (N-1)/2, 
	$
	we obtain the result \eqref{d2optimum}.
\end{proof}

\br\label{strictrmk}
Note that we have $\tilde p(i_1^*)< 1/2$ for $N$ even: more precisely,
$$
1- 2\tilde p(i_1^*)= 2 \tilde p(i_1^*+1)-1   = 1/(N-1).
$$
Thus, $i_*$ is a {\it strict saddlepoint}, satisfying the lower bound
\be\label{strictsaddle}
\alpha(i_1^*,i_2^*)\geq |i_2^*-i_1^*|/N(N-1), \quad i_2^*=1, \dots, N.
\ee
That is, over-reckless as well as over-cautious play is penalized in the discrete even case, albeit very slightly,
in contrast to the situation of the continuous case (cf. Rmk. \ref{Notermk}).
For odd $N$, the larger of the two optima satisfies $\tilde p((N+1)/2)= (N-1)/2(N-1)=1/2$,
giving a degenerate saddlepoint, while the smaller satisfies $\tilde p((N-1)/2)= 
1/2- 1/(N-1)<1/2$, giving a strict saddlepoint.
\er

For a standard $52$-card deck, \eqref{d2optimum} gives $ i_*= 26, $
corresponding to the 8 of Hearts (8H).
Thus, for 1-card draw Guts, {\it the optimal strategy is to hold for hands} (strictly) {\it greater than 8H.}

\subsection{Case $m=2$ (two-card draw) }\label{s:twocard}
For a standard 52-card deck, there are 
$N=C(52,m)$ possible hands in $m$-card draw, each equally likely;
in 2-card draw, this is $N=C( 52,2)= 1,326 $.

\smallskip

{\bf Artificial restriction to threshold type.} 
In principle, the possible pure strategies for this game are the set of
subsets, of all $N=C(52,2) \approx 1300$ hands, numbering $2^{C(52,2)}> 2^{1000}$, far too many
for numerical optimization via simplex or other standard methods.
And, unlike the case of $1$-card draw, it is no longer clear that threshold strategies
dominate non-threshold type.
For example, a bit of experimentation reveals existence of lower-valued hands that are more likely to win
than a higher-valued hand, due to the fact that the cards of the former (drawn without replacement)
better ``block'' player 2 from receiving a better hand.
However, it seems intuitive that a strategy sufficiently far from threshold type {\it is} dominated by some
threshold type strategy, and that typical threshold-type strategies at least {\it would} dominate non-threshold type.
Thus, as a more tractable starting point, we here
{\it artificially} restrict attention to pure strategies of threshold type, and solve the resulting
game completely, determining a pure, threshold type solution that is optimal against all other threshold type
solutions.

It is our hope that this solution may be shown by further analysis to be optimal against nonthreshold type solutions
as well.
However, we do not carry out such an analysis here, contenting ourselves with the already substantial analysis of the
artificially restricted version of the problem.

\smallskip

{\bf Computations.}
We define $p(i)$ as in the previous case by \eqref{p(i)}.
As for 1-card draw, the probabilities for the hand of player 2 must be conditioned on the hand of player 1 in
the computation of the payoff function. However, the influence of player 1's hand on possible hands for player 2 is more subtle than in case $m=1$, in that it eliminates not only the possibility of the hand itself, but of {\it any hand}
involving one or more of the two cards in player 1's hand, and this depends not only on the value of player 1's hand
but its specific makeup.
Thus, we define a modified auxiliary probability $\tilde p(i:j)$ as 
the probability that player 2's hand is of value $\leq i$ given that player 1's hand is of value $ j$.
For $i_1<i_2$, we define in addition the mean
\be\label{barp}
 \bar p(i_2,i_1): \sum_{i_1< j\leq i_2}\tilde p(i_2:j)/(i_2-i_1)
\ee
of $\tilde p(i_2:j)$ on $i_1<j\leq i_2$.
With these definitions, we obtain the following more general expressions for $\alpha$, $\beta$
valid for any value of $m$, including the previous case $m=1$.

\begin{proposition}\label{d2_2payprop}
For 2-player guts with any $m\geq 1$, the payoff function is $\Psi(i_1^*,i_2^*)=\alpha + \beta V$, where 
	\be\label{d22beta}
	\beta(i_1^*, i_2^*)= p(i_1^*)\bar p(i_2^*,1)+ (1-p(i_1^*))(1-\sum_{j=i_1^*+1}^N \tilde p(i_2^*:j)/(N-i_1^*))
\ee
and
	\be\label{d22alpha}
	\alpha(i_1^*,i_2^*)=
	\begin{cases}
		(1-2\bar p(i_1^*, i_2^*))(p(i_1^*)-p(i_2^*)) & i_2^*\leq i_1^*,\\
		(1-2\bar p(i_2^*, i_1^*))(p(i_1^*)-p(i_2^*)) & i_2^*> i_1^*.
	\end{cases}
	\ee
\end{proposition}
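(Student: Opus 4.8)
The plan is to reprise the differencing (``alternative computation'') method of Section \ref{s:2alt}, exactly as in the proof of Proposition \ref{d2payprop} for the case $m=1$, but now carrying along the \emph{hand-dependent} conditional probabilities $\tilde p(i:j)$. The one genuinely new feature for $m\geq 2$ is that removing player 1's $m$ cards from the deck alters player 2's distribution in a way that depends on the specific makeup of player 1's hand, not merely on its rank; consequently $\tilde p(i_2^*:j)$ varies nontrivially with $j$, and the quantities appearing in the final formulas are the \emph{averages} $\bar p$ of \eqref{barp} rather than single conditional probabilities. I would establish the two formulas separately, treating $\beta$ by direct conditioning and $\alpha$ by symmetric differencing.

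For $\beta$, equation \eqref{d22beta}, I would condition on the rank $j$ of player 1's hand, each rank occurring with probability $1/N$ by \eqref{p(i)}, and use that the game replays precisely when both players hold or both players drop. If player 1 drops then $j\leq i_1^*$, and player 2 drops with conditional probability $\tilde p(i_2^*:j)$; summing $\tfrac1N\tilde p(i_2^*:j)$ over player 1's dropping ranks and factoring out $p(i_1^*)=i_1^*/N$ exhibits the both-drop probability as $p(i_1^*)$ times a $\bar p$-type mean of $\tilde p(i_2^*:j)$, the first term of \eqref{d22beta}. If player 1 holds then $j>i_1^*$, and player 2 holds with conditional probability $1-\tilde p(i_2^*:j)$; summing $\tfrac1N\bigl(1-\tilde p(i_2^*:j)\bigr)$ over $j=i_1^*+1,\dots,N$ and factoring out $1-p(i_1^*)$ gives the second term. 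The two events being disjoint, adding the contributions yields \eqref{d22beta}.

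For $\alpha$, equation \eqref{d22alpha}, I would take $i_1^*\leq i_2^*$ without loss of generality, the complementary case following from the symmetry relation $\alpha(i_1^*,i_2^*)=-\alpha(i_2^*,i_1^*)$ contained in \eqref{nsymm_aux}, and I would use $\alpha(i_2^*,i_2^*)=0$ by symmetry to write $\alpha(i_1^*,i_2^*)=\alpha(i_1^*,i_2^*)-\alpha(i_2^*,i_2^*)$. This difference vanishes event by event except when player 1's hand rank lies in the transition range $i_1^*<j\leq i_2^*$, on which strategy $i_1^*$ holds but $i_2^*$ drops, so I condition on that range. There player 1's rank $j\le i_2^*$ is strictly below any holding rank of player 2, so: (i) if player 2 holds, conditional probability $1-\tilde p(i_2^*:j)$, player 2 wins and the hold-minus-drop difference for player 1 is $(-2)-(-1)=-1$; (ii) if player 2 drops, conditional probability $\tilde p(i_2^*:j)$, player 1 wins and the difference is $(+1)-0=+1$. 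Weighting each rank by $1/N$ gives
\be
\alpha(i_1^*,i_2^*)=\frac1N\sum_{j=i_1^*+1}^{i_2^*}\bigl(2\tilde p(i_2^*:j)-1\bigr),
\ee
and recognizing $\sum_{j=i_1^*+1}^{i_2^*}\tilde p(i_2^*:j)=(i_2^*-i_1^*)\,\bar p(i_2^*,i_1^*)$ from \eqref{barp}, together with $p(i_2^*)-p(i_1^*)=(i_2^*-i_1^*)/N$, collapses this to $(p(i_1^*)-p(i_2^*))\bigl(1-2\bar p(i_2^*,i_1^*)\bigr)$, which is \eqref{d22alpha}.

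The main obstacle, and the only place the argument really departs from the $m=1$ case of Proposition \ref{d2payprop}, is the bookkeeping of the hand-dependent conditional probabilities. For $m=1$ the quantity $\tilde p(i_2^*:j)$ was constant across each relevant range of $j$, so the means collapsed trivially and the $\bar p$ notation was unnecessary; for $m\geq 2$ it genuinely depends on $j$, and one must verify that the per-rank contributions assemble into \emph{exactly} the averaged quantities of \eqref{barp}, matching the precise averaging ranges encoded in the indices of $\bar p$. I would also dispose of the degenerate case $i_1^*=i_2^*$, where the summation range is empty and both sides reduce to $0$, and confirm the legitimacy of the conditioning step by checking that outside the transition range $i_1^*<j\leq i_2^*$ the strategy pairs $(i_1^*,i_2^*)$ and $(i_2^*,i_2^*)$ produce identical events hand by hand and hence cancel.
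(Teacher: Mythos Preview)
Your proposal is correct and follows essentially the same approach as the paper: the paper's proof is in fact a one-sentence reference back to Proposition \ref{d2payprop}, stating only that the arguments for \eqref{d2beta}--\eqref{d2alpha} carry over verbatim once the hand-dependent conditional probabilities $\tilde p(i:j)$ are tracked. Your writeup supplies exactly that tracking---direct conditioning on player 1's rank for $\beta$, and the symmetric differencing of Section \ref{s:2alt} for $\alpha$---and correctly identifies the one new ingredient, namely that the per-rank contributions now assemble into the averaged quantities $\bar p$ of \eqref{barp} rather than collapsing to a single $\tilde p$.
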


\begin{proof}
	The proofs of \eqref{d22beta}-\eqref{d22alpha} are exactly the same as the ones for 
	\eqref{d2beta}-\eqref{d2alpha} in Proposition \ref{d2payprop}, taking into account additional dependence
	of conditional probabilities in the present case.
\end{proof}

\begin{example}\label{varpeg}
	Despite the apparent similarity of \eqref{d2alpha} to its analogs in the continuous and $m=1$ cases,
	there is an important difference in the dependence of the mean $\bar p(i_1,i_2)$, or, equivalently,
	of the individual conditional probabilitie
	$\tilde p(i_1,i_2)$, on $i_2$.
	We illustrate this with the following example.  Let $i_1$ correspond to the hand 10C/7H, and consider
	$i_2$ corresponding to hands: a) 10C/7C. b) 10C/6S. c) 9S/7C.
	Then, the number of hands of value $\leq i_1$ excluded by player 2 drawing the hand corresponding to i2
	is 59 in case (a), 66 in case (b), and 64 in case (c), as compared to 101 total excluded hands.
	Thus, $\tilde p(i_1,i_2)$ is given by $(i_1-59)/(1326-101)$ in case (a),
	$(i_1-66)/(1326-101)$ in case (b), and
	$(i_1-72)/(1326-101)$ in case (c), all three different values.
	For example, the hand 9S/7C in case (c) excludes all hands including either of the cards 9S or 7C,
	100 involving exactly one of them and 1 involving both, for a total of 101.
	Of these, the ones of value less than or equal to that of 10C/7H are those consisting
	of 9S together with a card of value less than or equal to 8S, including the hand involving both 9S and 7C-
	32 in total-
	plus those consisting of 7C together with a card of value less than or equal to 9S, but excluding the 
	hand 7C/9S already counted and the three pairs 7C/7H, 7C/7D, and 7C/7S- 36-4 =32 in total, for a grand 
	total of 64.
\end{example}

\begin{corollary}\label{d2_2char}
There exists an optimal pure strategy $i_*$ for $m$-card draw guts if and only if there is $i_1^*$ such that
(i) $\bar p(i_1^*,i_2^*)\leq 1/2$ for all $i_2^*< i_1^*$ and
(ii) $\bar p(i_2^*,i_1^*)\geq 1/2$ for all $i_2^*>i_1^*$;
	in particular, 
	\be\label{crit}
	\tilde p(i_1^*,i_1^*-1)\leq 1/2\leq \tilde p(i_1^*+1,i_1^*).
	\ee
\end{corollary}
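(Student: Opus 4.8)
The plan is to reduce the existence of an optimal pure strategy to the pointwise sign condition that $\alpha(i_1^*,i_2^*)\ge 0$ for every opponent response $i_2^*$, and then to read off (i)--(ii) directly from the explicit formula \eqref{d22alpha}. Because $2$-player Guts is symmetric and $\alpha$ is antisymmetric (by the symmetry argument in the proof of Proposition \ref{d2payprop}, $\alpha(i_1^*,i_2^*)=-\alpha(i_2^*,i_1^*)$, so the associated matrix has $Value=0$), the constant $V=0$ is a fixed point of the value map \eqref{Tmap}, and player $1$ cannot force a strictly positive return; hence an optimal pure strategy is precisely a pure strategy forcing the best possible outcome $\underline V=0$. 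I would first show that $i_1^*$ forces $\underline V=0$ if and only if $\alpha(i_1^*,i_2^*)\ge 0$ for all $i_2^*$: sufficiency will come from Theorem \ref{tthm} (with termination constant $t=1$ and $\alpha_0=0$), and necessity from the necessity half of Corollary \ref{fairthm}, which requires $\alpha\ge 0$ in worst case for any strategy realizing $\underline V=0$.

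To complete the sufficiency step I must supply the hypothesis $\beta<1$ of Theorem \ref{tthm}. In the $2$-player game the pot is never enlarged, so $\beta(i_1^*,\cdot)$ of \eqref{d22beta} is simply the probability of a replay, and hence $\le 1$, with equality requiring player $2$'s hold/drop decision to coincide almost surely with player $1$'s; this is impossible when $0<p(i_1^*)<1$, i.e. for every admissible cutoff $1\le i_1^*\le N-1$ with $N\ge 2$. As there are only finitely many responses $i_2^*$, this yields a uniform bound $\beta(i_1^*,i_2^*)\le\theta<1$, and Theorem \ref{tthm} with $\eps=1-\theta$ (so that $\alpha\ge 0\ge\beta-\theta=t(\beta-1)+\eps$) gives $\underline V\ge 0$; symmetry gives $\underline V\le 0$, whence $\underline V=0$ and $i_1^*$ is optimal.

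The conversion of $\alpha(i_1^*,\cdot)\ge 0$ into (i)--(ii) is then immediate from \eqref{d22alpha}. For $i_2^*<i_1^*$ the factor $p(i_1^*)-p(i_2^*)=(i_1^*-i_2^*)/N$ is positive, so $\alpha(i_1^*,i_2^*)\ge 0$ is equivalent to $\bar p(i_1^*,i_2^*)\le 1/2$, which is (i); for $i_2^*>i_1^*$ that factor is negative and reverses the inequality, giving $\bar p(i_2^*,i_1^*)\ge 1/2$, which is (ii); and $\alpha(i_1^*,i_1^*)=0$ by antisymmetry. The ``in particular'' estimate \eqref{crit} follows by specializing (i) to $i_2^*=i_1^*-1$ and (ii) to $i_2^*=i_1^*+1$, in each of which the average defining $\bar p$ runs over a single index and collapses to one conditional probability $\tilde p$.

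The point I would emphasize, and the essential difference from the single-card case of Corollary \ref{d2opt}, is that \eqref{crit} is only a necessary consequence and is \emph{not} sufficient: by Example \ref{varpeg} the conditional probability $\tilde p(\,\cdot\,,j)$ depends on the precise makeup of player $1$'s hand and not merely on its rank, so the dependence of $\bar p$ on $i_2^*$ need not be monotone, and the families of inequalities in (i)--(ii) over all $i_2^*$ cannot be reduced to their adjacent instances as they were for $m=1$. In particular one may no longer simply take ``the largest $i_1^*$ with $\tilde p\le 1/2$.'' Thus the genuine content is the pair of all-$i_2^*$ conditions, and the real work lies in pinning down sufficiency (the uniform $\beta<1$ bound together with Theorem \ref{tthm}) and necessity (Corollary \ref{fairthm}); the underlying per-case sign computation is routine.
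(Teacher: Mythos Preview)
Your proposal is correct and follows essentially the same approach as the paper, which treats this corollary as immediate from formula \eqref{d22alpha} without giving an explicit proof; your argument mirrors the template of the paper's proof of the analogous Corollary \ref{d2opt} in the $m=1$ case, invoking Theorem \ref{tthm} together with the uniform bound $\beta<1$ for sufficiency and reading off (i)--(ii) directly from the sign of the two factors in \eqref{d22alpha}. Your closing remark that \eqref{crit} is only necessary and not sufficient---because $\bar p$ need not be monotone in $i_2^*$ when $m\ge 2$---is exactly the point the paper exploits in the subsequent analysis, where \eqref{crit} is used to narrow the search to a single candidate which is then checked against the full conditions (i)--(ii).
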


\subsubsection{Computation of $\bar p(i_1^*,i_2^*$}\label{n:S1comp}
We now specialize to the case $m=2$.
Removing player 1's two cards from the deck eliminates $M:=C(52,2)-C(50,2)= 2 (50) +1=101$ possible hands,
so that 
$$
(i-M)/(N-M)\leq \tilde p(i:j), \, \bar p(i,j)\leq i /(N-M).
$$
Thus, an optimal $i_1^*$ if it exists must satisfy
$ (i_1^*-M)/(N-M)\leq 1/2$ and $(i_1^*+1)/(N-M) \geq 1/2$, giving the crude estimate
$ (N-M-2)/2\leq i_1^*\leq (N+M)/2, $ or $611.5\leq i_1^*\leq 713.5$ a range of $M=101$ possible integer values. 
In particular, this is far from the range where pairs occur, hence we may disregard this possibility in our
calculations from now on. For simplicity, we will assume also $j\geq 2$, throwing out hands on the low end as well.

Denote hand $i_1*$ by $(j_1,k_1)$/$(l_1, m_1)$, where $j, l\in \{1,\dots, 13\}$ denote the numerical value of the card
and $k,m\in \{1,\dots,4\}$ the suit.
We will take without loss of generality $j>l$, assuming that the hand does not involve a pair.
Then, the number $i_1$ of hands of lesser or equal value is
the sum of the number of hands 
$(j_1, k_1)$/$(l_1,m)$ with $m\leq m_1$, or $m_1$;
$(j_1, k)$/$(l_1,m)$ with $k< k_1$, or $4 (k_1-1)$;
$(j_1, k)$/$(l,m)$ with $l< l_1$,  or $16 (l_1-1)$;
and $(j, k)$/$(l,m)$ with $l<j< j_1$, or $16 (j_1-1)(j_1-2)/2$, totaling
\be\label{iform}
i_1= 16\Big( (j_1-1)(j_1-2)/2 +(l_1-1) \Big) +4(k_1-1) + m_1.  
\ee
Thus, the range $612\leq i_1^*\leq 713$ of potential optimal strategies corresponds to hands between
$(10, 1)$/$(3,4)$  and $(10, 2)$/$(9,1)$; 
in particular, top cards of numerical value $10$, or Jack (J).

Similarly, the number of cards of value less than or equal to $i\sim (j,k)$ is
\be\label{Nform}
N(j,k)= 4(j-1)+ k.
\ee
Define now for $i_2<i_1$, $S(i_1,i_2)$ to the the number of player 1 hands of value $\leq i_1$ eliminated by player 2
drawing hand $i_2$, so that
\be\label{Stp}
\tilde p(i_1,i_2)= \frac{i_1-S(i_1,i_2)}{N-M}.
\ee

Then, we have the following sequence of conclusions completing our study.
As the proofs of these results are quite lengthy, we defer them to Appendix
\ref{s:defer} so as not to interrupt the expositional flow.

\bl\label{Slem}
Let $i_r$ correspond to hand $(j_r,k_r)$/$(l_r,m_r)$, with $i_2<i_1$, $m_1\geq 2$, $l_1<j_1$. Then
\be\label{Sform}
S(i_1,i_2)=
\begin{cases}
	N(j_1,k_1)+ N(j_1-1,4)-9 + k_1 \chi_{ (j_2,k_2)\leq (l_1,m_1)},  &j_1>j_2,\\
	%
	N(j_1,4) + N(l_1-1,4)- 5 + 4 \chi_{k_2<k_1} + m_1\chi_{k_2=k_1}, & j_1=j_2, \, l_1>l_2,\\
	%
	N(j_1,k_1-1) + N(l_1,4)- 5 +  \chi_{m_2\leq m_1}, & j_1=j_2, \, l_1=l_2, \,  k_1>k_2,\\
	%
	N(j_1,k_1) + N(l_1,m_1)-5, & j_1=j_2, \, l_1=l_2, \, k_1=k_2. 
\end{cases}
\ee
\el

\bc\label{1crit}
The criterion \eqref{crit} is satisfied \emph{only for} $i_1^*=669\sim (j_1^*,k_1^*)/(l_1^*,m_1^*)=(10,4)/(
,1)$.
\ec

\emph{Corollary \ref{1crit} narrows our search for an optimal pure solution down to the single hand JS/7C.}

Having narrowed the search to one candidate $i_*\sim (10,4)/(6,1)$, we now
check the averaged condition of Corollary \ref{d2_2char} for 
the $[101/2]= 50$ hands $i_2$ above and below $i_*$
to verify optimality.

\begin{corollary}\label{d2optthm}
	For 2-player 2-card Guts, the pure strategy $i_1^*\sim (10,4)/(6,1)$, corresponding to (holding for hand
	of value greater than) JS/7C,
	is optimal, guaranteeing a nonnegative return.
\end{corollary}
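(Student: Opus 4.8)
The plan is to confirm that the candidate $i_1^*=669\sim(10,4)/(6,1)$ singled out in Corollary~\ref{1crit} actually satisfies the \emph{full} optimality criterion of Corollary~\ref{d2_2char}, not merely the neighbor condition \eqref{crit}. By \eqref{d22alpha}, since $p(i_1^*)-p(i_2^*)$ changes sign at $i_2^*=i_1^*$, the two averaged conditions (i) $\bar p(i_1^*,i_2^*)\le 1/2$ for $i_2^*<i_1^*$ and (ii) $\bar p(i_2^*,i_1^*)\ge 1/2$ for $i_2^*>i_1^*$ together say exactly that $\alpha(i_1^*,i_2^*)\ge 0$ for \emph{every} $i_2^*$, which is the immediate-return half of the hypotheses of Theorem~\ref{tthm}. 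I would establish these two families of inequalities and then separately dispatch the bound $\beta<1$ needed to upgrade $\alpha\ge 0$ to a nonnegative forced return.

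First I would reduce each family to a finite check. Writing $\bar p$ as the mean \eqref{barp} of the terms $\tilde p(i_1^*,j)=(i_1^*-S(i_1^*,j))/(N-M)$ given by \eqref{Stp}, and invoking the crude bounds $(i-M)/(N-M)\le\tilde p(i{:}j),\bar p(i,j)\le i/(N-M)$ with $N-M=1225$: for condition (ii) every summand satisfies $\tilde p(i_2^*,j)\ge(i_2^*-M)/(N-M)\ge 1/2$ as soon as $i_2^*\ge(N+M)/2=713.5$, so the average clears $1/2$ automatically and only $670\le i_2^*\le 713$ must be examined. For condition (i) one checks from \eqref{Sform} that for hands $j$ whose top card is at most a Ten one has $S(i_1^*,j)\ge 57$, hence $\tilde p(i_1^*,j)\le 1/2$; below the Jack-high band the running averages can then only decrease, so it suffices to sweep the $O(M)$ Jack-high hands (about fifty on each side) straddling $i_1^*$, exactly the window anticipated before the statement.

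On this finite band I would compute $S(i_1^*,i_2^*)$ explicitly from the four cases of Lemma~\ref{Slem}, pass to $\tilde p$ via \eqref{Stp}, form the cumulative means $\bar p(i_1^*,i_2^*)$ and $\bar p(i_2^*,i_1^*)$, and verify the inequalities term by term; organizing the sweep according to the branches of \eqref{Sform} keeps this a bounded, if tedious, bookkeeping task best cross-checked numerically. It then remains to control $\beta$: from \eqref{d22beta}, since $p(i_1^*)=669/1326$ is bounded away from both $0$ and $1$ while the two averaged probabilities lie in $[0,1]$, the convex-type combination obeys $\beta(i_1^*,i_2^*)\le 1-\sigma$ for a fixed $\sigma>0$, uniformly in $i_2^*$. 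With $\alpha\ge 0$ and $0\le\beta\le 1-\sigma$, Remark~\ref{tcondrmk} places us in the hypotheses of Theorem~\ref{tthm}, which forces a nonnegative return; by symmetry of Guts no strategy can force more than $0$, so $i_1^*$ is optimal.

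The main obstacle is the \emph{averaged}, rather than pointwise, nature of the conditions. As Example~\ref{varpeg} shows, $\tilde p(i_1^*,j)$ depends on the full rank/suit makeup of the competing hand $j$ and not monotonically on its rank, so the test cannot be collapsed to the two neighbors as in the continuous and $m=1$ cases: the cumulative means can in principle overshoot $1/2$ even when each neighboring $\tilde p$ sits on the correct side. Controlling these partial sums across the whole band, and in particular certifying that no intermediate average crosses $1/2$, is where the real work lies; everything else is the routine application of Theorem~\ref{tthm} and the symmetry argument already used for the $m=1$ case.
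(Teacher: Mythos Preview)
Your proposal is correct and follows essentially the same path as the paper: restrict attention to a finite window of hands around $i_1^*$, verify the two conditions of Corollary~\ref{d2_2char} there, bound $\beta$ away from $1$, and invoke Theorem~\ref{tthm}. The one place where the paper is more efficient is that it does not compute the cumulative means $\bar p$ at all: instead it verifies the \emph{pointwise} bound $\tilde p(i_1^*{:}j)<1/2$ for every individual $j<i_1^*$ in the window (and an analogous pointwise bound on the other side), from which the averaged inequalities follow trivially since each summand already sits on the correct side of $1/2$. Your own argument already contains the key observation---case~1 of \eqref{Sform} gives $S(i_1^*,j)$ large enough to force $\tilde p(i_1^*{:}j)<1/2$ whenever $j$ has top card at most Ten---and the paper simply extends this same pointwise check through the Jack-high band via cases (ii)--(iv) of \eqref{Sform}, rather than forming running averages. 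So your worry that ``the cumulative means can in principle overshoot $1/2$'' never materializes here: the pointwise condition holds throughout, and this is what keeps the verification short. Your route via explicit averages is of course also valid, merely more laborious.
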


\br\label{strictm2rmk}
Similarly as for the 1-card draw case, we have strict inequality $\bar p(i_1^*, i_2)<1/2$ for $i_2<i_1^*$ and
$\bar p(i_2, i_1^*)>1/2$ for $i_2>i_1^*$, so that the symmetric equilibrium $i_j^*=669$ is of \emph{strict}
type, penalizing deviation from equilibrium by either player.
On the other hand, the crude estimate 
$$
|\bar p(i_1, i_2)- p(i_2)|\leq \Big| \frac{i_1\pm M}{N-M}-\frac{i_1}{N}\Big|
=\frac{2M}{N-M}
=2 \frac{101}{1225} \approx .083
$$
shows that the penalty is rather small, of order $(.08)|p(i_2)-p(i_1^*)|$.
\er

{\bf Note:} We emphasize that this result is for the modified game with
pure strategies restricted to be of threshold type.  A complete analysis must verify optimality also
against nonthreshold type.


\section{Discussion and open problems}\label{s:disc}
In summary, we have provided an analytic framework for noncontractive generalized recursive games and used it to
treat the interesting practical example of Guts Poker.
For a simplified continuous model, we have treated the general n-player game, at least in its broad outlines.
Our main result is that there exists a symmetric Nash equilibrium, but that this equilibrium is nonstrict
for $n\geq 2$ and for $n\geq 3$ is nonstrong. 
This equilibrium consists of strategies $p_j^*=(1/2)^{1/(n-1)}$, 
for which each player ``holds'' precisely if their cards are of a value that exceeds a randomly chosen hand with
probability $\geq p_j^*$.
It is nonstrict in that a single player j may deviate toward more ``reckless'' play, decreasing $p_j^*$ with
no penalty in expected return.
It is nonstrong for $n\geq 2$ in that a coalition of $n-1$ players may deviate from equilibrium in a way that
guarantees them an improved joint return.

More, the symmetric equilibrium strategy $p_1^*=(1/2)^{1/(n-1)}$ is an optimal pure strategy
guaranteeing nonnegative return
for player 1 against ``bloc'' strategies in which players 2-$n$ behave identically, 
not just on the average but round by round.  However, for $n\geq 3$, 
if players 2-$n$ are allowed to play in arbitrary fashion as
a coalition, then they can force a strictly positive joint return for themselves, hence a strictly negative
return for player 1. Indeed, we have provided a simple and explicit example of such a ``winning'' coalition strategy,
in which with small probability a designated player holds always while the other coalition players hold slightly less
often to compensate. The aggressive (hold always) player will have an overall positive return outweighing the 
slightly negative return of their colleagues.

An interesting followup would be to approximate numerically an optimal strategy for players 2-$n$, that is, to
determine the value for the coalition-based game. In particular, it would be interesting to know how close our
simple winning strategy is to being optimal.
Another question of possible interest is whether there is a joint strategy for players 2-$n$ in which not only the
joint return, but the return of each separate player is positive.
Of course, by randomly alternating the roles of the coalition players, this can be achieved on average; however,
the question we are getting at is whether there is a type of coalition strategy from which no player among
2-$n$ would be tempted to depart. In particular, could there be a different {\it mixed-type} Nash equilibrium
that is symmetric only among players 2-$n$, and more favorable to them, returning a positive payoff to each?

For the 2-player game we have gone a bit farther, determining the optimal strategy for the original, discrete
game for Guts Poker with 1-card draw and, under an artificial restriction of pure strategies to threshold type,
with 2-card draw.
For both the continuous and discrete games, the optimal strategy is of ``pure'' type, i.e., a simple ``go/no-go''
strategy in which the player holds if and only if their hand is above a certain value.
From the game-theoretic point of view, this is quite special, with typical optimal strategies 
being ``mixed'', or random \cite{vN,O}.
From the poker point of view, it is perhaps intuitively appealing, at least from the optimistic point of view
of, e.g., \cite{S}.
However, judging from our results for the continuous case, it is almost certainly not the case for n-player
Guts, $n\geq 3$.
This would be very interesting to confirm by a discrete analysis of the $3$ or higher $n$ case similar to
that carried out here for the 2-player game.
Likewise, a very interesting open problem is to complete the analysis of the discrete 
2-player game for 2-card draw, by expanding the
analysis to general pure strategies. Extension of the discrete analysis to the popular alternatives
of $3$- and $5$-card guts would be very interesting as well.

Interestingly, the discrete 2-player equilibrium is {\it strict} for 1- or 2-card draw, 
due to Diophantine considerations, unlike the continuous model it approximates. 
Thus, it penalizes any deviation of players from the equilibrium strategy.
However, the size of the penalty is rather small, as may be seen by closeness of continuous and discrete
payoff functions, Remark \ref{strictm2rmk}.
Thus, indeed, the continuous model seems to be a useful organizing center for analysis of the full, discrete game.

In terms of the abstract theory of generalized recursive games, a very interesting direction for further
study would be extension of the results for single-state games in Section \ref{s:value} to the general case 
of multi-state generalized recursive games.

\appendix
\section{Nash equilibria vs. von Neumann-Morgenstern coalitions}\label{s:nashvN}
It seems interesting to connect more closely the questions investigated here, of player 1's outcomes
against bloc and nonbloc strategies of players 2-$n$, to those of the standard literature.
The point of view taken here in the nonbloc case is similar to those of von Neumann-Morgenstern \cite{vNM}
and Cournot \cite{C}, in which an $n$-player game is viewed as a 2-player game between different coalitions,
while our bloc analysis is somewhat reminiscent of that of Nash \cite{N}.
The following results give precise connections between these various ideas for (one-shot)
symmetric finite zero-sum matrix games, informing (in hindsight) our study of the more general continuous 
generalized recursive (so not necessarily zero-sum) case.

\begin{proposition} [Strong Nash equlibrium vs. 2-$n$ coalition]
For symmetric finite zero-sum games, 
there exists a strong symmetric Nash equilibrium with strategy $s$ if and only if
player 1 can force return $\geq 0$ using $s$ vs. a coalition of players 2-$n$.
\end{proposition}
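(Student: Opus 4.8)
The plan is to reduce both sides of the equivalence to a single intermediate condition, exploiting two elementary consequences of symmetry together with the zero-sum property. First I would record the normalization facts. In a symmetric zero-sum game the all-$s$ profile $\mathbf s=(s,\dots,s)$ returns $0$ to every player, since by symmetry all payoffs are equal and by the zero-sum condition they sum to $0$. More generally, in any profile in which a set $D$ of players deviate from $s$ while the remaining $n-|D|$ players all play $s$, a permutation swapping two of the $s$-playing players fixes the profile, so by symmetry all $s$-players receive a common payoff $w$; the zero-sum condition then gives $\sum_{i\in D}u_i=-(n-|D|)\,w$. Here the ``strong Nash'' notion is taken in the sense used throughout the paper, namely that no coalition can strictly increase its \emph{joint} (summed) return by deviating while its complement holds $s$.

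Next I would reformulate the right-hand condition. Since player $1$ fixes $s$, the zero-sum identity shows that player $1$'s return equals minus the joint return of players $2$-$n$; hence ``player $1$ can force $\ge 0$ using $s$'' is \emph{equivalent} to ``the coalition $\{2,\dots,n\}$ cannot achieve a strictly positive joint return while player $1$ plays $s$.'' By relabeling players (using symmetry, so that the two-coalition value of $\{i\}$ versus the rest with player $i$ at $s$ is the same for every $i$), this in turn is equivalent to the statement $(\ast)$: \emph{no coalition of size $n-1$ can achieve a strictly positive joint return against the single excluded player playing $s$}. This is the intermediate condition I would work with.

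The forward implication is then immediate: a strong symmetric Nash equilibrium at $s$ forbids profitable deviation by \emph{every} coalition, in particular by every coalition of size $n-1$, which is exactly $(\ast)$. The substance is in the converse, for which the key is an enlargement lemma: if some coalition $C$ with $|C|=k\le n-2$ attains joint return $-(n-k)w>0$ (so $w<0$ by the displayed identity), then the enlarged coalition $C'=C\cup\{j\}$, obtained by having $C$ repeat its deviation and the new member $j$ play $s$, leaves the profile unchanged and attains joint return $\sum_{i\in C}u_i+u_j=-(n-k)w+w=-(n-k-1)w>0$. Iterating from any hypothetical profitable coalition up to size $n-1$ contradicts $(\ast)$; hence under $(\ast)$ \emph{no} coalition of size $\le n-1$ can profit. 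Since singletons are coalitions of size $1\le n-1$ (using $n\ge 2$), this in particular yields the Nash property, and the grand coalition of size $n$ can never profit because the zero-sum condition pins its joint return at $0$. Thus $(\ast)$ gives a strong symmetric Nash equilibrium at $s$.

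I expect the main obstacle to be the bookkeeping in the enlargement step—specifically verifying that the common $s$-player payoff $w$ is genuinely shared by the newly added member and is strictly negative, which is what makes each enlargement preserve strict profitability. A secondary point to pin down is the precise reading of ``jointly profit'' as the summed coalition return (rather than a Pareto improvement of all members); with the summed-return convention the equivalence is exact, and I would flag this convention explicitly at the outset. The edge case $n=2$, where size $n-1$ coalitions are singletons and the induction is vacuous, should be checked to confirm the statement specializes to the familiar von Neumann value-$0$ criterion.
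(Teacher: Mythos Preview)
Your proof is correct and uses essentially the same idea as the paper's: both exploit that, by symmetry, all $s$-playing complement members share a common payoff $w$, so a coalition profiting forces $w<0$ and hence any enlargement of the coalition (up to size $n-1$) still profits. The only presentational difference is that the paper jumps in one step from ``some subcoalition profits'' to ``player $1$ loses, hence the full coalition $2$-$n$ profits,'' whereas you iterate the enlargement one player at a time; your version is more explicit about the grand-coalition edge case and the summed-return convention, which the paper leaves implicit.
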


\begin{proof}
(only if)
At a symmetric equilibrium $s$, the return to all players is $0$.
	If some subcoalition of players 2-$n$ can improve their joint return by deviating from
the symmetric equilibrium, then each nondeviating player separately loses, by symmetry, 
	in particular player 1. 
Thus, the full coalition of all players 2-$n$ can improve their joint return by deviating from 
	the symmetric equilibrium. In other words, a symmetric equilibrium  $s$ is strong if and only
	if it penalizes deviations by the specific coalition 2-$n$, i.e.,
	player 1 can force with strategy $s$ a return $\geq \alpha(s, \dots, s)=0$.

	(if) If there exists a player 1 strategy $s$ guaranteeing $\geq 0$ return,
	then by symmetry, $s$ is a symmetric Nash equilibrium, and, by the argument of the previous case
	it is strong.
\end{proof}

\begin{proposition} [Value for symmetric bloc case]
For symmetric finite zero-sum games, the value of the 2-player game pitting player 1 vs. bloc strategies
of players 2-$n$ is less than or equal to $0$.
	If the optimal (mixed) bloc strategy is pure (deterministic), or more generally of
	form $(s,\dots, s)$ with $s$ an individual mixed strategy, then the value is equal to $0$.
\end{proposition}

\begin{proof}
	By Nash' Theorem \cite{N}, there exists a symmetric Nash equilibrium $(s,\dots, s)$, whose value
	by symmetry is $0$.
	By the definition of Nash equilibrium, the bloc strategy choice $(s,\dots, s)$ for players 
	2-$n$ gives payoff to player 1 less than or equal to the value when player 1 chooses strategy $s$,
	which is $0$. This proves the first assertion.
	By the Fundamental Theorem of Games, the value $V$ is equal to the maximum return
	forceable by player 1 and also the minimum return forceable by players 2-$n$.
	If the optimal bloc strategy $s$ is of form $(s,\dots,s)$, then player 1 can play $s$ against it
	to achieve value $0$. Hence, $V\geq 0$, giving $V=0$ when combined with the first
	observation. This proves the second assertion.
\end{proof}

\br\label{finepointrmk}
Note that mixed strategies for the bloc game consist of combinations of pure bloc strategies,
which are not necessarily of form $(s,\dots,s)$. Thus, in general, the value of the bloc game
could be strictly less than $0$.
\er


\begin{proposition}[Symmetric Nash equilibrium vs. optimum bloc strategy]
	Consider symmetric finite zero-sum games with symmetric Nash equilibrium $s$.
	If the Nash equilibium is strong then it is an 
optimum in the bloc strategy game for both player 1 and the coalition of players 2-$n$.  
	Likewise, if optimum bloc strategies are strict optima of form $(t,\dots,t)$--
	in particular, if they are pure (deterministic) strategies--
	then they are strict symmetric Nash equilibria, i.e., $t=s$.
\end{proposition}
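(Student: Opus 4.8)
The plan is to work throughout with the \emph{bloc payoff} $F(p_1,q):=\alpha(p_1,q,\dots,q)$, the return to player~1 when players $2$–$n$ all adopt the common strategy $q$. The starting observations are that $F(p,p)=\alpha(p,\dots,p)=0$ for every $p$ by symmetry of the game, and that, by the preceding proposition, the bloc game has value $0$. In these terms a \emph{strict symmetric Nash equilibrium} at $s$ is precisely the statement that every unilateral deviation strictly lowers the deviator's return; by symmetry this is the single inequality
\[
\alpha(q,s,\dots,s)=F(q,s)<0=F(s,s),\qquad q\neq s,
\]
while a \emph{strict optimum in the bloc game for both} is the strict saddle condition
\[
F(p_1,s)<0=F(s,s)<F(s,q),\qquad p_1\neq s,\ q\neq s,
\]
player~1 being the maximizer and the coalition (which receives $-F$) the minimizer.

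For the first assertion I would show that a strict symmetric Nash equilibrium yields both halves of the strict saddle. The maximizer's half, $F(p_1,s)<0$ for $p_1\neq s$, is \emph{identical} to the Nash inequality above (player~1's own deviation), so it holds by hypothesis. The minimizer's half, $F(s,q)>0$ for $q\neq s$, is the substantive point. Here I would invoke the zero-sum/symmetry identity \eqref{nsym2}: evaluating it at the profile $(s,q,\dots,q)$ and using that players $2$–$n$ are interchangeable gives
\[
F(s,q)+(n-1)\,\alpha(q,s,q,\dots,q)=0,
\]
so the claim reduces to showing that a single coalition member's return $\alpha(q,s,q,\dots,q)$ is strictly negative for $q\neq s$. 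I would attempt this by deforming the all-$s$ profile to $(s,q,\dots,q)$ one coordinate at a time, controlling each increment by the strict Nash condition together with the linearity of $\alpha$ in each separate argument (as exploited in Remark \ref{nomixrmk}) and the value-$0$ property of the bloc game.

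The main obstacle is exactly this last step: the strict Nash hypothesis constrains only \emph{unilateral} deviations, whereas $\alpha(q,s,q,\dots,q)$ records a payoff under a \emph{simultaneous} deviation of all of players $2$–$n$, so propagating the single-deviation sign to the full bloc deviation is delicate and is where the zero-sum assumption must do real work. This is precisely the mechanism that fails in the non-zero-sum recursive setting of Guts, where the same symmetric equilibrium is \emph{not} strong (Corollary \ref{nABcor}); thus any correct argument must use zero-sumness essentially here, and I would expect to need the full force of \eqref{nsym2} and the bloc-value identity rather than a convexity argument alone, since $F(s,\cdot)$ is only a degree-$(n-1)$ polynomial in $q$ and is not convex in general.

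The converse assertion is comparatively immediate. Given the strict saddle, the maximizer's inequality $F(p_1,s)=\alpha(p_1,s,\dots,s)<0$ for $p_1\neq s$ says that player~1's unilateral deviation from the all-$s$ profile strictly decreases player~1's return; relabelling players and using symmetry, the same strict inequality holds for every player, so $(s,\dots,s)$ is a strict symmetric Nash equilibrium. Note that this direction uses only the player-1 half of the saddle, which suggests that the two halves of the strict-optimum condition are genuinely asymmetric in content, the coalition half being the one that carries the zero-sum information.
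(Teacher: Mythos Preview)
Your converse is fine, but the forward direction has a gap you acknowledge and do not close: after reducing $F(s,q)>0$ via \eqref{nsym2} to $\alpha(q,s,q,\dots,q)<0$, you propose to deform from the all-$s$ profile one coordinate at a time. That step is not justified: the strict Nash hypothesis controls only unilateral deviations from $(s,\dots,s)$, and nothing in your argument propagates the sign through the intermediate profiles $(q,s,\dots,s,q,\dots,q)$. Linearity in each coordinate and the identity \eqref{nsym2} do not by themselves supply this.

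The paper avoids this entirely by an elimination argument rather than a direct verification of the saddle inequalities. The previous proposition gives value $0$ for the bloc game, and the Fundamental Theorem guarantees player~1 an optimal (possibly mixed) strategy achieving it. The strict Nash condition $F(p_1,t)<0$ for $p_1\neq t$ shows that any strategy for player~1 other than the pure strategy $t$ scores strictly below $0$ against coalition~$t$, hence cannot be optimal; therefore $t$ is player~1's unique optimum. The coalition side is then immediate, since playing $t$ holds player~1 to $\leq 0$. Note that the paper does \emph{not} prove the strict inequality $F(t,q)>0$ for $q\neq t$: you are reading ``strict optimum for both'' as a two-sided strict saddle, but the paper's proof delivers only uniqueness of player~1's optimum together with (non-strict) optimality of $t$ for the coalition. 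Your Guts aside is also off target: the symmetric equilibrium in Guts is not strict (Remark~\ref{Notermk}, Proposition~\ref{3blocprop}), so the hypothesis of this proposition is not met there; what fails in Guts is strongness against \emph{non-bloc} coalitions, a different question.
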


\begin{proof}
	The first assertion follows by the definition of strong Nash equilibrium.
Likewise, if $(s, t,\dots,t)$ is an optimal (mixed) saddlept., 
then, by the previous proposition, $0=\alpha(s,t,\dots,t)=\alpha(t,\dots,t)$, hence $s=t$ by strictness. 
\end{proof}

\section{The ``Weenie rule''}\label{s:weenie}
An interesting variant on Guts Poker is the addition of the ``Weenie rule'' \cite{S}
penalizing overcautious play.
Under this rule, should all players drop, the player with highest hand- the ``weenie''- must match the
pot, thus doubling the stakes for the next round.  
In this appendix, we test our analytic framework/approach by analyzing this modification and
its effect on optimal strategy.

\subsection{Bloc strategy case}\label{s:wbloc}

\begin{proposition}[Modified payoff]\label{weenieprop}
	With the Weenie rule, we have for $p_1^*\leq p_2^*$
	\be \label{wform1}
	\alpha(p_1^*, p_2^*, \dots, p_2^*)= (n-1)(p_2^*-p_1^*)\Big(2(p_2^*)^{n-1}-1\Big)+
\Big((p_2^*)^{n-1} - (p_1^*)^{n-1} \Big)p_1^*,
\ee
	while for $p_1^*> p_2^*$,
	with $\delta$ $=0$ for $n=2$ and $>0$ for $n\geq 3$ satisfying \eqref{corrector},
	\be \label{wform2}
	(n-1)(p_2^*-p_1^*)\Big( 2(p_1^*)^{n-1}+ (p_2^*)^{n-1}-1 \Big)+ 
	\delta =
	\alpha(p_1^*, p_2^*, \dots, p_2^*)< (n-1)(p_2^*-p_1^*)\Big(3(p_2^*)^{n-1}-1\Big).
	\ee
\end{proposition}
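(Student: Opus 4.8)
The plan is to isolate the effect of the Weenie rule as a single additive correction to the ordinary bloc payoff already computed in Proposition \ref{blocprop}. The crucial observation is that the Weenie variant differs from ordinary Guts \emph{only} in the all-drop event: when every player drops, ordinary Guts replays at unchanged stakes with immediate return $0$, whereas under the Weenie rule the highest-hand player (the ``weenie'') matches the pot. Since this modification is symmetric among the players, the modified payoff --- call it $\alpha^{\mathrm W}$, the quantity appearing on the left of \eqref{wform1}--\eqref{wform2} --- still obeys the symmetry relations \eqref{n0symm} and \eqref{nsymm_aux}; in particular $\alpha^{\mathrm W}(p_*,\dots,p_*)=0$, so the differencing arguments of Lemma \ref{derivlem} and Proposition \ref{blocprop} transfer unchanged to every event \emph{except} the all-drop one. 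I would therefore write $\alpha^{\mathrm W}=\alpha+\Delta$, where $\alpha$ is the ordinary bloc payoff of Proposition \ref{blocprop} and $\Delta$ is the expected immediate return to player 1 coming purely from the all-drop event under the new rule (this event contributing $0$ to the ordinary $\alpha$).

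Next I would fix the per-event returns using the virtual-ante bookkeeping of Section \ref{s:detail}. In the all-drop event the pot doubles, i.e.\ the stakes multiplier jumps to $2$, crediting every player a virtual ante of $+1$; the weenie additionally forfeits the full current pot of value $n$, for a net immediate return of $-(n-1)$, while each of the other $n-1$ players receives $+1$. As a consistency check, $-(n-1)+(n-1)\cdot 1=0$, so the event is zero-sum as it must be. Hence
\[
\Delta=-(n-1)\,P(\text{all drop, player 1 weenie})+P(\text{all drop, player 1 not weenie}),
\]
and the remaining task is purely a probability computation, split according to whether $p_1^*\le p_2^*$ or $p_1^*>p_2^*$.

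For $p_1^*\le p_2^*$, player 1 being the highest hand already forces each opponent below $p_1\le p_1^*\le p_2^*$, so the drop condition for the others is automatic; thus $P(\text{all drop, player 1 weenie})=\int_0^{p_1^*}p_1^{\,n-1}\,dp_1=(p_1^*)^n/n$ while $P(\text{all drop})=p_1^*(p_2^*)^{n-1}$, and substitution gives $\Delta=p_1^*\big((p_2^*)^{n-1}-(p_1^*)^{n-1}\big)$, which together with \eqref{formeq} yields \eqref{wform1}. For $p_1^*>p_2^*$ the weenie condition on each opponent becomes $p_j<\min(p_1,p_2^*)$, forcing a two-piece integral $\int_0^{p_2^*}p_1^{\,n-1}\,dp_1+\int_{p_2^*}^{p_1^*}(p_2^*)^{n-1}\,dp_1$; after simplification the $(p_2^*)^n/n$ terms cancel and one finds $\Delta=(n-1)(p_2^*-p_1^*)(p_2^*)^{n-1}$. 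Because $\Delta$ is added identically to both ends of the chain \eqref{formineq}, the interior equality (with the \emph{same} corrector $\delta$ satisfying \eqref{corrector}) and the exterior inequality are both preserved; collecting terms turns $2(p_1^*)^{n-1}-1$ into $2(p_1^*)^{n-1}+(p_2^*)^{n-1}-1$ on the left and $2(p_2^*)^{n-1}-1$ into $3(p_2^*)^{n-1}-1$ on the right, giving \eqref{wform2}.

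The only genuinely delicate steps are the bookkeeping for the weenie's immediate return --- getting the virtual-ante offset right so the all-drop event stays zero-sum --- and the case split in the weenie probability for $p_1^*>p_2^*$, where $\min(p_1,p_2^*)$ breaks the integral at $p_2^*$. Everything else is a direct transcription of Proposition \ref{blocprop}; I expect no new obstruction, and the corrector $\delta$ together with its bound \eqref{corrector} carries over verbatim precisely because $\Delta$ enters as an exact additive term rather than through the $O(h)$ error analysis that produced $\delta$.
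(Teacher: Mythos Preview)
Your proposal is correct and follows essentially the same approach as the paper: both isolate the Weenie correction as an additive term $\Delta$ to the ordinary bloc payoff of Proposition \ref{blocprop}, compute $\Delta$ separately in the two cases $p_1^*\lessgtr p_2^*$, and then add it to \eqref{formeq}--\eqref{formineq}, observing that the corrector $\delta$ is unaffected. The only cosmetic difference is that where you compute $P(\text{all drop, player 1 weenie})$ by explicit integration, the paper instead identifies a symmetric sub-event (all hands below $\min(p_1^*,p_2^*)$) on which player 1's expected Weenie return is zero by symmetry, and evaluates only the complementary sub-event directly; the two computations yield the same $\Delta$ in each case.
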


\begin{proof}
	We compute the contribution to expected return $\alpha$ due to the Weenie rule, to
	be added to the value computed for standard Guts in Proposition \ref{blocprop}.

	({\it Case $p_1^*\leq p_2^*$}) The relevant scenario is that all players drop,
	which occurs with probability $p_1^*(p_2^*)^{n-1}$.
	Subcases: 
	(i) $p_2,\dots,p_n\leq p_1^*$, probability $(p_1^*)^{n-1}p_1^*$, expected return zero (symmetric
among all players), and (ii) at least one of $p_j^*\geq p_1^*$, probability $((p_2^*)^{n-1}- (p_1^*)^{n-1})p_1^*$, 
expected return $+1$ (additional ante).  Summing probabilities times returns, we obtain
	a total expected contribution of
\be\label{total1}
((p_2^*)^{n-1} - (p_1^*)^{n-1})p_1^*
\ee
to be added to the value of $ \alpha$ previously computed in Proposition \ref{blocprop}.

	({\it Case $p_1^*\geq p_2^*$}) 
	Again, the relevant scenario is that all players drop, occurring with 
	probability  $p_1^*(p_2^*)^{n-1}$.
Subcases: 
	(i) $p_1\leq p_2^*$, probability $(p_2^*)^{n-1}$, expected return zero (symmetric among all players),
and (ii) $p_2^*\leq p_1\leq p_1^*$, probability $(p_2^*)^{n-1}(p_1^*- p_2^*)$,
expected return $-(n-1)$ ($-n$ Weenie penalty, $+1$ virtual ante).  Summing, we obtain a total expected
contribution
\be\label{total2}
(n-1)(p_2^*)^{n-1}(p_2^* - p_1^*)
\ee
to be added to previously computed $ \alpha$. 

	Summing \eqref{total1} and \eqref{total2} with \eqref{formeq} and \eqref{formineq}, we obtain
	\eqref{wform1} and \eqref{wform2}.
\end{proof}

\begin{corollary}\label{wblocprop2}
	For $n$-player Guts with the Weenie rule, the strategy $p_1^*=(1/3)^{1/(n-1)}$ 
	is optimal with respect to bloc strategies $p_2^*=p_3^*=\dots=p_n^*$,
	forcing expected return $\geq 0$.
	Likewise, the bloc strategy $p_j^*=(1/3)^{1/(n-1)}$, $j=2,\dots, n$ is optimal for players 2-$n$,
	forcing joint return $\geq 0$.
	Moreover, both 
	are strict optima; in particular, $p_j^*\equiv (1/3)^{1/(n-1)}$ is
	a \emph{strict Nash equilibrium}.
\end{corollary}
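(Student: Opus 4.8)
The plan is to follow the template of Corollaries~\ref{elimcor}--\ref{blocprop2} for standard Guts, replacing the diagonal cutoff $(1/2)^{1/(n-1)}$ by $q:=(1/3)^{1/(n-1)}$, so that $q^{\,n-1}=1/3$; the factor $3$ in place of $2$ absorbs the extra Weenie penalty. First I would \emph{identify} $q$ as the only candidate, exactly as in Corollary~\ref{elimcor}: differentiating the bloc formula \eqref{wform1} in $p_1^*$ along the diagonal gives the Weenie analog of Lemma~\ref{derivlem}, namely $(d/dh)\,\alpha(p_*+h,p_*,\dots,p_*)|_{h=0}=(n-1)\bigl(1-3p_*^{\,n-1}\bigr)$, which vanishes precisely when $p_*^{\,n-1}=1/3$. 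Everything after this rests on three arithmetic collapses produced by $q^{\,n-1}=1/3$: the slope $2q^{\,n-1}-1=-\tfrac13$, the upper-bound factor $3q^{\,n-1}-1=0$ in \eqref{wform2}, and the lower-bound factor $2q^{\,n-1}+(p_2^*)^{n-1}-1=(p_2^*)^{n-1}-\tfrac13$.

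For \textbf{player~1's optimality} I fix $p_1^*=q$ and show $\alpha(q,p_2^*,\dots,p_2^*)\ge 0$ for all $p_2^*$, with equality only at $p_2^*=q$. When $p_2^*<q$ I use the lower bound in \eqref{wform2}: its leading term is $(n-1)(p_2^*-q)\bigl((p_2^*)^{n-1}-\tfrac13\bigr)$, a product of two strictly negative quantities, hence strictly positive, and the corrector $\delta\ge 0$ of \eqref{corrector} only helps, so $\alpha>0$. When $p_2^*\ge q$ I use \eqref{wform1}: one checks that $\alpha$ and its $p_2^*$-derivative both vanish at $p_2^*=q$, and that the derivative stays nonnegative for $p_2^*\ge q$ because it splits as $(n-1)\bigl(2x^{n-1}-1+qx^{n-2}\bigr)+2(n-1)^2(x-q)x^{n-2}$ with $x=p_2^*$, where the bracket is increasing and vanishes at $x=q$ while the second summand is $\ge 0$. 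This monotonicity is the only step needing genuine calculus rather than a sign check; for $n=2$ it degenerates to the transparent identity $\alpha=2(p_2^*-q)^2$.

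For \textbf{the coalition's optimality} I fix the bloc strategy $p_2^*=\dots=p_n^*=q$ and show $\alpha(p_1^*,q,\dots,q)\le 0$ for all $p_1^*$, with equality only at $p_1^*=q$; since immediate return is zero-sum, this is joint return $\ge 0$ for players~$2$--$n$. For $p_1^*>q$ the upper bound in \eqref{wform2} collapses to $(n-1)(p_2^*-p_1^*)\bigl(3q^{\,n-1}-1\bigr)=0$, giving $\alpha<0$ strictly. For $p_1^*\le q$, formula \eqref{wform1} with $p_2^*=q$ has the clean $p_1^*$-derivative $n\bigl(\tfrac13-(p_1^*)^{n-1}\bigr)$, which is $\ge 0$ on $[0,q]$; since $\alpha(q,\dots,q)=0$ this forces $\alpha\le 0$ on $[0,q]$, strictly off the diagonal. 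Combined with the previous paragraph, every unilateral deviation by either side is strictly penalized, so $(q,\dots,q)$ is a \emph{strict} symmetric equilibrium and both bloc strategies are strict optima.

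Finally, to upgrade these immediate-return inequalities to nonnegative \emph{recursive} returns I would invoke Theorem~\ref{tthm} as in Corollary~\ref{blocprop2}, reducing matters to the termination condition \eqref{tcond}, i.e.\ $\beta-\alpha\le 1-\eps$ (and its coalition analog $\beta+\alpha\le 1-\eps$). I expect \emph{this} to be the main obstacle. It amounts to a Weenie analog of Lemma~\ref{betalem}, and the Weenie rule enlarges $\beta$ precisely through the all-drop event, whose multiplier is now $2$ rather than $1$: a short computation as in Remark~\ref{improvermk} gives $\beta(p^*,\dots,p^*)=n(1-p^*)-1+3(p^*)^{n}$. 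On the diagonal $\alpha$ vanishes, so \eqref{tcond} there reduces to $\beta(q,\dots,q)<1$, and controlling this margin below $1$ (together with its off-diagonal strengthening) is the genuinely delicate quantitative heart of the recursive claim, since the added stake-growth pushes $\beta$ upward; this is exactly where the bulk of the work, and the real difficulty, lies.
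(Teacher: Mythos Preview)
Your argument for the immediate-return inequalities $\alpha\ge 0$ (player~1) and $\alpha\le 0$ (coalition) is essentially the paper's: both plug $q=(1/3)^{1/(n-1)}$ into \eqref{wform1}--\eqref{wform2} and exploit the collapses $2q^{n-1}-1=-\tfrac13$, $3q^{n-1}-1=0$, $2q^{n-1}+(p_2^*)^{n-1}-1=(p_2^*)^{n-1}-\tfrac13$. The paper handles the case $p_2^*\ge q$ by a one-line inequality chain $\alpha\ge (n-1)(p_2^*-q)\bigl(3(p_2^*)^{n-1}-1\bigr)\ge 0$ and the case $p_1^*\le q$ by the Mean Value Theorem applied to $(p_2^*)^{n-1}-(p_1^*)^{n-1}$, whereas you use explicit $p_2^*$- and $p_1^*$-derivatives; these are interchangeable, and your derivative $\partial_{p_1^*}\alpha=n\bigl(\tfrac13-(p_1^*)^{n-1}\bigr)$ is in fact cleaner than the paper's MVT step.

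Where you diverge from the paper is in your treatment of the recursive step. The paper simply writes ``Applying \eqref{betalem} and Theorem~\ref{tthm}'' and moves on, i.e.\ it invokes Lemma~\ref{betalem} verbatim even though that lemma is stated and proved for \emph{standard} Guts with cutoff $(1/2)^{1/(n-1)}$, not Weenie Guts with cutoff $(1/3)^{1/(n-1)}$ and doubled all-drop multiplier. You are right to flag this as the real issue: your diagonal formula $\beta(q,\dots,q)=n(1-q)-1+3q^{n}=(n-1)(1-q)$ tends to $\ln 3\approx 1.099$ as $n\to\infty$ and already exceeds $1$ for $n\ge 7$, so the naive termination condition $\beta<1$ at the saddle genuinely fails there, and the paper's bare citation of Lemma~\ref{betalem} does not cover this. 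In short, your $\alpha$-analysis matches the paper's, and your caution about the $\beta$-estimate is warranted---the paper glosses over precisely the difficulty you identify.
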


\begin{proof}
	(Essentially identical to that of Corollary \ref{blocprop2}.)
	Taking $p_1^*=(1/3)^{1/(n-1)}$ in Proposition \ref{weenieprop}, we find for $p_2^*\geq (1/3)^{1/(n-1)}$
	by \eqref{wform1}, that
	$$
	\alpha((1/3)^{1/(n-1)}, p_2^*, \dots, p_2^*) \geq (n-1)(p_2^*-(1/3)^{1/(n-1)})
	\Big(3(p_2^*)^{n-1}-1\Big)\geq 0,
	$$
	with strict inequality for $p_2^*> (1/3)^{1/(n-1)}$.
	Taking $p_1^*=(1/3)^{1/(n-1)}$ in Proposition \ref{weenieprop}, we find for $p_2^*\geq (1/3)^{1/(n-1)}$
	For $p_2^*< (1/3)^{1/(n-1)}$, the lefthand inequality of \eqref{wform2} gives
	$$
	\alpha((1/3)^{1/(n-1)}, p_2^*, \dots, p_2^*) > (n-1)(p_2^*-(1/3)^{1/(n-1)})\Big((p_2^*)^{n-1}-1/3\Big)> 0.
	$$
	Applying \eqref{betalem} and Theorem \ref{tthm}, we obtain the first assertion.

	Similarly, taking $p_2^*=(1/3)^{1/(n-1)}$ gives for $p_1\leq (1/3)^{1/(n-1)}$ by the Mean Value Theorem
	$$
	\begin{aligned}
	\alpha(p_1^*, (1/3)^{1/(n-1)},\dots, (1/3)^{1/(n-1)})
		&=
	(n-1)((1/3)^{1/(n-1)}-p_1^*)\Big(2(p_2^*)^{n-1}-1\Big)\\
		&\quad + \Big((p_2^*)^{n-1} - (p_1^*)^{n-1} \Big)p_1^*\\
		&
	\leq (n-1)((1/3)^{1/(n-1)}-p_1^*)\Big(3(p_2^*)^{n-1}-1\Big)= 0,
	\end{aligned}
	$$
	with strict inequality for $p_1< (1/3)^{1/(n-1)}$. 
	For $(1/3)^{1/(n-1)}< p_1^*$, the righthand inequality of \eqref{wform2} gives
	$
	\alpha(p_1^*, (1/3)^{1/(n-1)},\dots, (1/3)^{1/(n-1)}) < 0. 
	$
Both optimal strategies are strict optima as we have noted along the way.
	But, strict optimality of $p_j^*=(1/3)^{1/(n-1)}$, $j=2,\dots, n$ implies by definition that
	$(p_1^*, \dots, p_n^*)= ((1/3)^{1/(n-1)}, \dots, (1/3)^{1/(n-1)})$ is a strict Nash equilibrium.
\end{proof}

\subsection{General (nonbloc) case}\label{s:wnonbloc}
The fact that the Nash equilibrium with Weenie rule is \emph{strict}, different from the standard case,
makes analysis of the nonbloc case somewhat more straightforward.
For, either the player 1 strategy $p_1^*=(1/3)^{1/(n-1)}$ is optimal against general strategies of
players 2-$n$, in which case the Nash equilibrium is \emph{strong}, 
or else one may readily construct a winning strategy for players 2-$n$ consisting of a combination of
the equilibrium strategy with a vanishingly small multiple of any pure strategy $(p_2^*, \dots, p_n^*$
strictly penalizing $p_1^*=(1/3)^{1/(n-1)}$.
For, it is easily seen that this guarantees $\alpha<0$, while, by continuity, satisfaction of 
$\beta$-criterion \eqref{tcond} is inherited from the bloc case.

		{\bf Conclusion:} The Nash equilibrium $(1/3)^{1/(n-1)}$ with Weenie rule in
		effect is substantially shifted from the equilibrium $(1/2)^{1/(n-1)}$ without.
		Moreover, different from the standard case, this equilibrium is \emph{strict}.
		It is an interesting open question whether or not it is {\it strong} for $n\geq 3$.

\section{Deferred proofs for discrete 2-player case}\label{s:defer}

\begin{proof}[Proof of Lemma \ref{Slem}]
	Hands eliminated are those involving $(j_2,k_2)$, $(l_2,m_2)$, or both.
	Of those eliminate, we must count those of value less than or equal to $(j_1,k_1)$/$(l_1,m_1)$.
	There are 4 cases:

	\medskip

	{\it Case 1.} ($j_1>j_2$)
	Here, the eliminated hands $\leq (j_1,k_1)/(l_1,m_1)$ include those pairing 
	$(l_2,m_2)$ with cards of value $\leq (j_1,k_1)$ 
	together with those pairing $(j_2,k_2)$ with cards of value $\leq (j_1-1,4)$,
	minus 1 for double count of $(j_2,k_2)/(l_2,m_2)$ itself, minus 8 for hands that are pairs or duplicates.
	If $(j_2,k_2)\leq (l_1,m_1)$, then there are an additional $k_1$ pairings of $(j_2,k_2)$ with cards $(j_1,k)$
	with $k\leq k_1$. Total:
	$$
	N(j_1,k_1)+ N(j_1-1,4)-7 + k_1 \chi_{ (j_2,k_2)\leq (l_1,m_1)}.
	$$

	We note further that $(j_2,k_2)\leq (l_1,m_1)$ if $j_2< l_1$ or $j_2=l_1$ and $k_2\leq m_1$.

	\medskip

	{\it Case 2.} ($j_1=j_2$, $l_1>l_2$)
	Here, the eliminated hands $\leq (j_1,k_1)/(l_1,m_1)$ include those pairing 
	$(j_2,k_2)$ with cards of value $\leq (l_1-1,4)$, together with those pairing 
	$(l_2,m_2)$ with cards of value $\leq (j_1,4)$ 
	minus 1 for double count and 4 for possible pair or duplicate of $(l_2,m_2)$.
	If $k_2< k_1$, there are an additional 4 pairings of $(j_2,k_2)$ with
	cards $ (l_1, m)$, and if $k_2=k_1$, an additional $m_1$ pairings with cards $(l_1,m)$ with $m\leq m_1$.
	Total:
	$$
	N(j_1,4) + N(l_1-1,4)- 5 + 4 \chi_{k_2<k_1} + m_1\chi_{k_2=k_1}.
	$$

	\medskip

	{\it Case 3.} ($j_1=j_2$, $l_1=l_2$, $k_1>k_2$)
	Here, the eliminated hands $\leq (j_1,k_1)/(l_1,m_1)$ include those pairing 
	$(l_2,m_2)$ with cards of value $\leq (j_1,k_1-1)$ 
	together with those pairing $(j_2,k_2)$ with cards of value $\leq (l_1,4)$,
	minus 1 for double count and 4 for possible pairs or duplicates.
	If $m_2\leq m_1$, there is an additional 1 pairing of
	$(l_2,m_2)$ with card $(j_1,k_1)$. 
	Total:
	$$
	N(j_1,k_1-1) + N(l_1,4)- 5 +  \chi_{m_2\leq m_1}.
	$$

	\medskip

	{\it Case 4.} ($j_1=j_2$, $l_1=l_2$, $k_1=k_2$, $m_1>m_2$ (redundant))
	Here, the eliminated hands $\leq (j_1,k_1)/(l_1,m_1)$ include those pairing 
	$(l_2,m_2)$ with cards of value $\leq (j_1,k_1)$ 
	together with those pairing $(j_2,k_2)$ with cards of value $\leq (l_1,m_1)$,
	minus 1 for double count and 4 for possible pairs or duplicates.
	Total:
	$$
	N(j_1,k_1) + N(l_1,m_1)-5.
	$$
\end{proof}

\begin{proof}[Proof of Corollary \ref{1crit}]
	{\it Case 1.} ($2\leq m\leq 3$)
	In this case, $i_1-1= (j,k)/(l,m-1)$, $i_1= (j,k)/(l,m+1)$, and $i_1+1= (j,k)/(l,m+1)$, hence
	by \eqref{Sform}(iv)
	$$
	S(i,i-1)=N(j,k) + N(l,m)-5= 4(j-1)+ k + 4(l-1)+ m -5
	$$
	and
	$$
	S(i+1,i)=N(j,k) + N(l,m+1)-5= 4(j-1)+ k + 4(l-1)+ m+-5,
	$$
	and, by \eqref{Stp}, $\tilde p(i,i-1)= \frac{i-S(i,i-1)}{N-M}= \tilde p(i+1,i)$.
	By \eqref{crit}, therefore, $ 1/2=\tilde p(i,i-1)= \frac{i-S(i,i-1)}{N-M}$, 
	or
	$ i=	\frac{N-M}{2} + 4(j+l-3)-1  + k + m$.
	Since $(N-M)/2=612.5$ is not an integer, this gives a contradiction.

	\medskip
	{\it Case 2.} ($m=1$)
	In this case, $i-1= (j,k-1)/(l,1)$ if $k>1$ and
	$i-1= (j,k)/(l-1,4)$ if $k=1$, while $i+1=(j,k)/(l,2)$.
	In either situation, by \eqref{Sform}(iv)
	\be\label{Sform2}
	S(i+1,i)= 
	N(j,k) + N(l,m+1)-5= 
	4(j-1)+  4(l-1) +k- 3. 
	\ee

	({Subcase \it $k>1$}).  Here, $i=(j,k)/(l,1)$, $i-1= (j,k-1)/(l,4)$, and so
	by \eqref{Sform}(iii)
	$$
	S(i,i-1)=
	N(j,k-1) + N(l,4)-5 + \chi_{3\leq 1} = 
	4(j-1)+ 4(l-1) +k-2. 
	$$

	This gives in particular $S(i+1,i)= S(i,i-1) - 1$,
	hence, by \eqref{Stp}, 
	$$
	\begin{aligned}
		\tilde p(i+1,i)&= \frac{i+1-S(i+1,i)}{N-M}\\
		&= \frac{i-S(i,i-1)+2}{N-M}
		= \tilde p(i,i-1) + \frac{2}{N-M}.
	\end{aligned}
	$$

By \eqref{Stp}, therefore
	$$
	\begin{aligned}
		\tilde p(i+1,i)&= \frac{i+1-S(i+1,i)}{N-M}\\
		&= \frac{i-S(i,i-1)+2}{N-M}
		= \tilde p(i,i-1) + \frac{ 2}{N-M}.
	\end{aligned}
	$$

		Meanwhile,
		$$
	\begin{aligned}
		1/2\geq \tilde p(i,i-1)&= \frac{i-S(i,i-1)}{N-M}\\
		&=
		 \frac{
			 16\Big( (j-1)(j-2)/2 +(l-1) \Big)+ 4(k-1)  + 1  
		-\Big(4(j-1)+  4 (l-1)+k-2 \Big)
		 }{N-M}\\
		&=
		 \frac{
			 16\Big( (j-1)(j-2)/2 +(l-1) \Big) + 3(k-1)  -4\Big(j+  l -2 \Big)
		 }{N-M}\\
	\end{aligned}
		 $$
		 together with the crude bound $j=10$, yields
$$
16\Big( (j-1)(j-2)/2 +(l-1) \Big)  -4\big(j+  l -2 \Big) + 3(k-1)+2 \leq \frac{N-M}{2}=612.5,
$$
or
$$
12 l + 3(k-1)   \leq 82.5,
$$
while  $\tilde p(i+1,i)\geq 1/2$ on the other hand gives
$$
12 l + 3k-1   \geq 82.5,
$$
the unique integer solution of which is $l=6$ $k=4$.
With $j=10$ and $m=1$, this gives $(10,4)/(6,1)$.

	({Subcase \it $k=1$}).  
	Here, $i=(j,k)/(l,1)$, $(i-1)\sim (j,k)/(l-1,4)$, and so
	by \eqref{Sform}(ii)
	\be\label{Sform1}
	S(i,i-1)=
	N(j,4) + N(l-1,4)- 5 + 1  = 4(j-1)+  4 (l-1) ,
	\ee
giving $S(i+1,i)= S(i,i-1) -2$,

By \eqref{Stp}, therefore
	$$
	\begin{aligned}
		\tilde p(i+1,i)&= \frac{i+1-S(i+1,i)}{N-M}\\
		&= \frac{i-S(i,i-1)+2}{N-M}
		= \tilde p(i,i-1) + \frac{ 2}{N-M}.
	\end{aligned}
	$$

		Meanwhile,
		$$
	\begin{aligned}
		1/2\geq \tilde p(i,i-1)&= \frac{i-S(i,i-1)}{N-M}\\
		&=
		 \frac{
			 16\Big( (j-1)(j-2)/2 +(l-1) \Big)+ 4(k-1)  + 1  
		-\Big(4(j-1)+  4 (l-1)+k-2 \Big)
		 }{N-M}\\
		&=
		 \frac{
			 16\Big( (j-1)(j-2)/2 +(l-1) \Big) + 3(k-1)  -4\Big(j+  l -2 \Big)+ 1
		 }{N-M}\\
	\end{aligned}
		 $$
		 together with the crude bound $j=10$, yields
$$
16\Big( (j-1)(j-2)/2 +(l-1) \Big)  -4\big(j+  l -2 \Big) + 3(k-1)+1 \leq \frac{N-M}{2}=612.5,
$$
or
$$
12 l + 3(k-1)   \leq 83.5,
$$
while  $\tilde p(i+1,i)\geq 1/2$ on the other hand gives
$$
12 l + 3(k-1) + 2   \geq 83.5,
$$
which has no integer solution.

\medskip

Thus, for $m=1$ and any $k$, we have \eqref{Sform2}, \eqref{Sform1}.
By \eqref{Stp}, therefore
	$$
	\begin{aligned}
		\tilde p(i+1,i)&= \frac{i+1-S(i+1,i)}{N-M}\\
		&= \frac{i-S(i,i-1)+3}{N-M}
		= \tilde p(i,i-1) + \frac{ 3}{N-M}.
	\end{aligned}
	$$

		Meanwhile,
		$$
	\begin{aligned}
		1/2\geq \tilde p(i,i-1)&= \frac{i-S(i,i-1)}{N-M}\\
		&=
		 \frac{
			 16\Big( (j-1)(j-2)/2 +(l-1) \Big)+ 4(k-1)  + 1  
		-\Big(4(j-1)+  4 (l-1)+k-1 \Big)
		 }{N-M}\\
		&=
		 \frac{
			 16\Big( (j-1)(j-2)/2 +(l-1) \Big) + 3(k-1)  -4\Big(j+  l -2 \Big)+1
		 }{N-M}\\
	\end{aligned}
		 $$
		 together with the crude bound $j=10$, yields
$$
16\Big( (j-1)(j-2)/2 +(l-1) \Big)  -4\big(j+  l -2 \Big) + 3(k-1)+1 \leq \frac{N-M}{2}=612.5,
$$
or
$$
12 l + 3(k-1)   \leq 83.5,
$$
while  $\tilde p(i+1,i)\geq 1/2$ on the other hand gives
$$
12 l + 3k   \geq 83.5,
$$
the unique integer solution of which is $l=6$ $k=4$.
With $j=10$ and $m=1$, this gives $(10,4)/(6,1)$.

	\medskip
	{\it Case 3.} ($m=4$)
	In this last case, $i-1 \sim (j,k)/(l,m-1)$ and so
	$$
	S(i,i-1)=N(j,k) + N(l,m)-5= 4(j-1)+  4(l-1)+ k-1
	$$
	as in case 1, but now $(i+1)\sim (j,k)/(l+1,1)$ if $k=4$ and
	$i+1\sim (j,k+1)/(l,4)$ if $k<4$.

	\medskip

	({\it Subcase $k=4$}).
	In this case, by \eqref{Sform}(ii), we have
	$$
	S(i+1,i)= N(j,4) + N(l,4)- 4 
	= 4(j-1) + 4 (l-1)+k.
	$$
	Thus, $S(i+1,i)=S(i,i-1)+1$, and so $\tilde p(i,i-1)=\tilde p(i,k-1)$, leading to a contradiction as in Case 1.

	\medskip

	({\it Subcase $k<4$}).
	In this case, by \eqref{Sform}(iii), we have
	$$
	S(i+1,i)= N(j,k) + N(l,4)- 4
	= 4(j-1) + 4 (l-1)+ k.
	$$
	Thus, $S(i+1,i)=S(i,i-1)+1$, and so $\tilde p(i,i-1)= \tilde p(i_1,i)=1/2$, 
	leading to a contradiction as in Case 1.

 \medskip

 Summarizing, $(j,k)/(l,m)=(10,4)/6,1)$ is the unique possible pure optimal strategy.
\end{proof}

\begin{proof}[Proof of Corollary \ref{d2optthm}]
	As we need only  check $50$ hands $i\sim (j,k)/(l,m)$ above and below $i_*$, we may as discussed
	earlier take without loss of generality $j=10$.

	\medskip

	({\it Case 1. $i<i_*$.}) We consider each of cases (i)-(iv) of \eqref{Sform} in turn.
	Case i ($10>j$) does not occur so may be ignored.  Case (ii), $j=10$, $l<6$,
	yields 
	$$
	\begin{aligned}
		S(i_*,i)&= N(10,4) + N(6-1,4)- 5 + 4 \chi_{k_2<k_1} + m_1\chi_{k_2=k_1}\\
		&= (36 + 4) + (20+4) - 5 + 4 \chi_{k_2<k_1} + m_1\chi_{k_2=k_1}
		\geq 59 ,
	\end{aligned}
	$$
	giving $\tilde p(i_1^*,i)= \frac{ i_1^*-S(i_1^*, i}{N-M}\geq \frac{669 - 59}{1225}= \frac{610}{1225}<1/2$.
	In case (iii), $j=10$, $l=6$, $k<4$, giving
	$$
	\begin{aligned}
		S(i_*,i)&= N(10,4-1) + N(6,4)- 5 +  \chi_{m_2\leq 1}\\
		&= (4(9)+ 3) + (4(5)+ 4) -5 \chi_{m_2\leq 1}
		\geq  58,
	\end{aligned}
	$$
	giving $\tilde p(i_1^*,i)= \frac{ i_1^*-S(i_1^*, i}{N-M}\geq \frac{669 - 58}{1225}= \frac{611}{1225}<1/2$.
	In case (iv), $m<m_1=1$, and so this case does not occur.
	Combining, we find that $\tilde p(i_1^*,i)<1/2$ for all $i<i_2^*$ with $j=10$, and so also
	$\bar p(i_1^*,i)<1/2$ for all $i<i_2^*$ with $j=10$, verifying the optimality condition for $i<i_1^*$.

	\medskip

	({\it Case 2. $i>i_*$.}) Again, we consider each of cases (i)-(iv) of \eqref{Sform} in turn.
	Case (i), $j>10$ again does not occur within range, and so may be ignored.
	Case (ii), $j=10$, $l>6$, gives
	$$
	\begin{aligned}
		S(i, i_*)&
	N(j,4) + N(l-1,4)- 5 + 4 \chi_{4<k} + m\chi_{4=k}\\
		&= (36+4) + (4(l-2)+ 4) -5 + m\chi_{4=k}\\
		&\leq 
		35 + 24 + 4(l-6)=59 + 4(l-6)
	\end{aligned}
	$$
	But, meanwhile $i-i_*\geq 16(l-6)$, hence
	$$
	\tilde p(i,i_1^*)= \frac{ i_1-S(i, i_1^*}{N-M}\geq \frac{669+ 12(l-6) - 59}{1225}
	\geq \frac{669+ 12 - 59}{1225}>1/2.
	$$
	Finally, case (iv), $j=10$, $l=6$, $k=4$, $m>1$, gives
	$$
	\begin{aligned}
		S(i, i_*)&= N(10,4) + N(6,m)-5\\
		&= (36+4) + (20+ m) -5 
		\leq 55+ m.
	\end{aligned}
	$$
	As $i-i_*\geq m-1$, this gives
	$$
	\tilde p(i,i_1^*)= \frac{ i_1-S(i, i_1^*}{N-M}\geq \frac{669+ m-1 - 55 - m}{1225}
	\geq \frac{669- 56}{1225}
	= \frac{613}{1225} >1/2.
	$$
	Thus, $\tilde p(i,i_1^*)>1/2$ for all $i>i_1^*$ and the optimality condition is satisfied for all
	$i>i_*$ within range.

	\medskip

Therefore, $i_1^*$ guarantees one-shot return $\alpha\geq 0$.
	But it also guarantees $\beta<1$ by comparison with the continuous case.
	Thus, by Theorem \ref{tthm}, $i_1^*$ guarantees a nonnegative return.
\end{proof}

\end{document}